\DeclareMathOperator{\Mod}{Mod}
\DeclareMathOperator{\im}{im}
\DeclareMathOperator{\upp}{up}
\begin{document}

\newcommand{\francois}[1]{{\color{red}Fran\c cois: #1}}
\newcommand{\steve}[1]{{\color{blue}Steve: #1}}
\newcommand{\alex}[1]{{\color{purple}Alex: #1}}

\newcommand{\mc}{ \mathcal}
\newcommand{\mr}{\mathrm}
\newcommand{\mb}{\mathbf}
\newcommand{\ms}{\mathscr}
\newcommand{\mf}{\mathfrak}
	
\newcommand{\lb}{[\![}
\newcommand{\rb}{]\!]}
\newcommand{\ra}{\rangle}
\newcommand{\la}{\langle}
	
\newcommand{\ioe}{\leqslant}
\newcommand{\soe}{\reslant}
	
\newcommand{\equi}{\Leftrightarrow}
\newcommand{\Equi}{\Longleftrightarrow}
\newcommand{\li}{\Leftarrow}
\newcommand{\ri}{\Rightarrow}
	
\newcommand{\A}{\forall}
\newcommand{\E}{\exists}
\newcommand{\st}{\subset}
	
\newcommand{\N}{\mathbb{N}}
\newcommand{\Z}{\mathbb{Z}}
\newcommand{\Q}{\mathbb{Q}}
\newcommand{\R}{\mathbb{R}}
\newcommand{\C}{\mathbb{C}}
	
\newcommand{\Bij}{\mathrm{Bij}}
\newcommand{\End}{\mathrm{End}}

\newcommand{\id}{\mathrm{id}}
\newcommand{\Iso}{\mathrm{Iso}}
\newcommand{\Aut}{\mathrm{Aut}}
\newcommand{\Ad}{\mathrm{ad}}
\newcommand{\ord}{\mathrm{ord}}
\renewcommand{\H}{\mathrm{H}}
\newcommand{\M}{\mathrm{M}}

\renewcommand{\P}{\mathbf{P}} 
\newcommand{\T}{\mathrm{T}} 
\renewcommand{\S}{\mathbf{S}} 

\newcommand{\K}{\mathbf{K}}
\newcommand{\Ch}{\mathbf{Ch}}
\newcommand{\D}{\mathbf{D}}
\newcommand{\V}{\mathbf{V}}
\newcommand{\W}{\mathbf{W}}
\newcommand{\Int}{\mathrm{int}}
\renewcommand{\k}{\mathbf{k}}
\newcommand{\kvg}{\k_{\V_\gamma}} 
\newcommand{\Dg}{\D_{\gamma^{\circ,a}}} 
\newcommand{\Modg}{\Mod_{\gamma^{\circ,a}}}
\newcommand{\SCov}{\mc{S}\mathrm{Cov}}
\newcommand{\Op}{\mathrm{Op}}
\newcommand{\Fun}{\mathrm{Fun}}
\newcommand{\op}{\mathrm{op}}
\newcommand{\Psh}{\mathbf{Psh}}
\newcommand{\raf}{\preceq}
\newcommand{\colim}{\mathrm{colim}}
\renewcommand{\a}{\mathfrak{a}}
\newcommand{\Eph}{\mathbf{Eph}}
\newcommand{\Pers}{\mathbf{Pers}}
\newcommand{\Ho}{\mathbf{Ho}}
\newcommand{\RR}{\mathrm{R}}
\newcommand{\stalk}{\mathrm{stalk}}
\newcommand{\dR}{\mathrm{R}}
\newcommand{\dL}{\mathrm{L}}
\newcommand{\sw}{\mathrm{sw}}
\newcommand{\HOM}[1][]{{\mathscr{H}\mspace{-4mu}om}_{\raise1.5ex\hbox to.1em{}#1}}
\newcommand{\B}{\mathbf{B}}
\newcommand{\Hom}[1][]{\mathrm{Hom}_{\raise1.5ex\hbox to.1em{}#1}}
\newcommand{\bb}{\mathbf{bb}}
\newcommand{\db}{\mathbf{db}}
\newcommand{\hb}{\mathbf{hb}}
\newcommand{\vb}{\mathbf{vb}}
\renewcommand{\Ho}{\ms{H}\mspace{-4mu}o}
\newcommand{\spn}{\mathrm{span}}
\newcommand{\coker}{\mathrm{coker}}
\newcommand{\dual}{\mathrm{D}}

\newcommand{\low}{\mathrm{low}}

\newcommand{\supp}{\mathrm{supp}}

\newcommand{\REF}{ {\color{blue} REF}}

\newcommand{\isoarrow}{\rotatebox{270}{\(\simeq\)}}
\newcommand{\ol}{\overline}

\newcommand{\PBT}{\mathbf{PBT}}

\newcommand{\family}{\mf{F}}

    \newcommand{\norm}[1]{\vert \vert #1 \vert \vert}
    \newcommand{\opnorm}[1]{ \vert \vert \vert #1 \vert \vert \vert}

\title[Computation of $\gamma$-linear projected barcodes]{Computation of $\gamma$-linear projected barcodes for multiparameter persistence}

\author{Alex Fernandes}
\address{\hspace{-0.25cm}Department of Mathematics, École normale supérieure, France}
\email{alex.fernandes@ens.fr}

\author{Steve Oudot}
\address{\hspace{-0.25cm} Inria Saclay -- \^Ile-de-France and \'Ecole Polytechnique, 91120, Palaiseau, France }
\email{steve.oudot@inria.fr}

\author{François Petit}
\address{\hspace{-0.25cm} Université Paris Cité and Université Sorbonne Paris Nord, Inserm, INRAE, Centre for Research in Epidemiology and Statistics (CRESS), 75004, Paris, France}
\email{francois.petit@inserm.fr}

\thanks{A.F was partially supported by the French Agence Nationale de la Recherche through the project reference ANR-22-CPJ1-0047-01 and by the Fondation Sciences Mathematiques de Paris through the Paris Graduate School for Mathematical Sciences scholarship.}

\thanks{F.P. was supported by the French Agence
Nationale de la Recherche through the project reference ANR-22-CPJ1-0047-01.}

\keywords{multi-parameter persistence; fibered barcode; sheaf theory} 

	
	\theoremstyle{plain} 
	\newtheorem{theorem}{Theorem}[section]
	\newtheorem{corollary}[theorem]{Corollary}
	\newtheorem{proposition}[theorem]{Proposition}
	\newtheorem{lemma}[theorem]{Lemma}
	\theoremstyle{definition} 
	\newtheorem{definition}[theorem]{Definition}
	\newtheorem{example}[theorem]{Example}
	\newtheorem{remark}[theorem]{Remark}
	\newtheorem{examples}[theorem]{Examples}
	\newtheorem{question}[theorem]{Question}
	\newtheorem{Rem}[theorem]{Remark}
	\newtheorem{Notation}[theorem]{Notations}
	\newtheorem{conjecture}[theorem]{Conjecture}

\numberwithin{equation}{section}

\begin{abstract}
The $\gamma$-linear projected barcode was recently introduced as an alternative to the well-known fibered barcode for multiparameter persistence, in which restrictions of the modules to lines are replaced by pushforwards of the modules along linear forms in the polar of some fixed cone~$\gamma$. So far, the computation of the $\gamma$-linear projected barcode has only been studied in the functional setting, in which persistence modules come from the persistent cohomology of $\R^n$-valued functions. Here we develop a method that works in the algebraic setting directly, for any multiparameter persistence module over~$\R^n$ that is given via a finite free resolution. Our approach is similar to that of RIVET: first, it pre-processes the resolution to build an arrangement in the dual of~$\R^n$ and a barcode template in each face of the arrangement; second, given any query linear form~$u$ in the polar of~$\gamma$, it locates~$u$ within the arrangement to produce the corresponding barcode efficiently. While our theoretical complexity bounds are similar to the ones of RIVET, our arrangement turns out to be simpler thanks to the linear structure of the space of linear forms. Our theoretical analysis combines sheaf-theoretic and module-theoretic techniques, showing that multiparameter persistence modules can be converted into a special type of complexes of sheaves on vector spaces called {\em conic-complexes}, whose derived pushforwards by linear forms have predictable barcodes. 
\end{abstract}

\maketitle

\tableofcontents

\section{Introduction}

\subsection{Context}

Persistence theory studies the topological variations within a filtered family of topological spaces, and it encodes these variations in an algebraic object called a persistence module. When the filtered family of spaces is indexed over $\R$ (or, more generally, over a totally ordered set), the theory is well understood, and the resulting persistence modules admit a complete descriptor called the barcode. But when the indexing set of the family is $\R^n$ for some $n\geq 2$ (or, more generally, a partially ordered set), the situation is far more complex, and the direct generalization of the notion of barcode as the collection of indecomposable summands of the persistence modules is extremely complex which poses several pratical challenges. This has been a major hindrance to the the use of multiparameter persistence in applications.

Several approaches have been proposed to overcome this difficulty, in particular the resort to incomplete descriptors that are comparatively easy to compute and to interpret.   Among these, the {\em fibered barcode} \cite{CFFFL13} stands out as one that can be both efficiently computed and nicely represented (at least for 2-parameter persistence modules) thanks to the software RIVET~\cite{LW15}. The fibered barcode of a persistence module $M$ is defined as the collection of barcodes obtained by considering restrictions of $M$ to affine lines of positive slopes. As an incomplete descriptor, it cannot distinguish between certain pairs of non-isomorphic persistence modules, even some that are particularly simple~\cite{V20}. An approach to tackle this blind spot would be to enrich the construction of the fibered barcode with a larger class of operations than just restrictions. This is where the idea of turning persistence modules into sheaves comes into the picture, which has the great benefit of allowing then for the use of classical sheaf operations that are unavailable in the persistence modules framework. 

This was the idea pursued by the authors of \cite{BP23}, who introduced the concept of {\em projected barcode} of an $n$-parameter persistence module, defined as the collection of derived pushforwards---in a sheaf-theoretic sense---of the module along maps $\R^n\to\R$ from a prescribed family~$\family$. Interestingly, this concept encompasses the usual fibered barcode as a special case~\cite[Corollary 5.14]{BP23} and thus yields a strictly stronger descriptor if a large enough family~$\family$ is used. It also enjoys some stability properties akin to the ones enjoyed by the fibered barcode. In addition, despite its mathematically involved definition, the projected barcode along individual maps $u\colon \R^n\to\R$ turns out to be very simple to compute when the input module encodes the persistent homology of some $\R^n$-valued function~$f$: indeed, in this case, it is given essentially by the usual 1-parameter persistent cohomology of the composition $u\circ f$.  Thus, the sheaf-theoretic framework developed in~\cite{BP23} provides the algebraic foundation to an otherwise very simple and natural idea to cope with the multi-parameter persistent homology of $\R^n$-valued functions, which is to reduce the dimensionality of the problem via post-composition by multiple projections $\R^n\to\R$.

This framework, as appealing as it may sound,  remains nonetheless limited in two important ways:
(1) it is practical only in the functional setting, where derived pushforwards translate into post-compositions; (2) it only allows for the "pointwise" evaluation of the projected barcode, e.g., the computation of pushforwards along individual maps $\R^n\to\R$. It does not provide a description of the projected barcode as a whole over the family~$\family$. By contrast, the entire combinatorial structure of the usual fibered barcode can be encoded as a finite arrangement of hyperplanes augmented with barcode templates in its faces, as described in~\cite{LW15} and exploited first in RIVET then in subsequent work on computing the associated \emph{matching distance} \cite{BK21,KLO20}.
Our goal here is to lift these two limitations, largely taking inspiration from what has been done in RIVET for the fibered barcode.  

\subsection{Our contributions}

As our goal is arguably hard (if at all possible) to achieve for arbitrary families~$\family$ of maps~$\R^n\to\R$, we restrict our focus to a particular family, composed of linear forms of operator norm~$1$, more precisely, of those unitary linear forms that are located in the polar of some fixed cone~$\gamma$ in the dual of~$\R^n$. There are several good reasons for considering this particular family: first, the corresponding pushforwards can be defined in the derived category of $\gamma$-sheaves, which naturally connects to the category of persistence modules \cite{KS18, BP21}; second, the resulting projected barcode, called the {\em $\gamma$-linear projected barcode} , has a combinatorial structure that is simple enough to be encoded in a way similar to that of the fibered barcode; finally, the $\gamma$-linear projected barcode has been shown in~\cite{BP23} to nicely complement the fibered barcode, as each is able to discriminate between different sets of pairs of non-isomorphic persistence modules---see~\cite[Section 5.1]{BP23} for an example where the projected barcode is more discriminating, and for an example where the fibered barcode is more discriminating take an interval module supported on an infinite left-open vertical band.

At the heart of our contribution is an algorithm to pre-process a finitely presented $n$-parameter persistence module~$M$ (given through some pre-computed finite free resolution) into an augmented arrangement \`a la RIVET that encodes the combinatorial structure of its $\gamma$-linear projected barcode entirely.  This algorithm is completed by a routine that can efficiently compute the projected barcodes of~$M$ along individual query linear forms in~$\family$. A notable feature of our augmented arrangement is to be lower-dimensional than the one defined for the fibered barcode; this is particularly interesting in the 2-parameter case, where our arrangement becomes 1-dimensional and is therefore simpler to build and to query than the one used in RIVET in practice, even though the worst-cast complexity bounds are similar.
The details of our algorithm and of its associated query routine are given in Section~\ref{sec:algo_proj_barcode}. They rely on several new mathematical concepts and results, introduced in Sections~\ref{sec:conic-complexes} and~\ref{sec:extension_algo}:
\begin{itemize}
    \item the concept of {\em conic-complex} (Definition~\ref{def:conic-complex}), a special type of $\gamma$-complex of sheaves on~$\R^n$, in which the terms have a very simple structure akin to that of free persistence modules;
    \item the fact that the module-sheaf correspondence functor introduced in~\cite{BP21,KS18} sends the free resolution of our input persistence module to a conic-complex, and that a finitely presentable persistence module can be unambiguously recovered from its associated $\gamma$-sheaf (Proposition~\ref{prop_pullback_alpha},  Proposition~\ref{prop:ff_aalpha} and Proposition~\ref{prop:free_resol_to_conic_complex}~(2.));
    \item the fact that the derived pushforward along any linear form~$u$ sends this conic-complex on $\R^n$ to a conic-complex on $\R$, with a one-to one correspondence between the summands of the two complexes (Theorem~\ref{thm_proj_bar});
    \item the concept of {\em simplex-wise} filtered cochain complex (Definition~\ref{def:simplex-wise}), an axiomatization of the usual filtered simplicial cochain complexes arising in persistence, which allows for the use of matrix reduction in order to compute barcodes;
    \item the fact that our conic-complexes on~$\R$ can be turned into simplex-wise filtered cochain complexes, with a predictable effect on their barcode (Propositions~\ref{prop_finite_filtration} and~\ref{prop_finite_simplexwise_barcode}, Theorem~\ref{thm_red_qsw}). 
\end{itemize}
These new ingredients combine sheaf-theoretic and module-theoretic techniques, and down the road they produce an effective algorithmic way to enhance the popular software RIVET for multi-parameter topological data analysis. We believe that this interdisciplinarity is a notable aspect of our work.

In Section \ref{sec:example}, we detail a running example of our algorithm on a handcrafted persistence module and in Section~\ref{sec:experiments} we report some experimental results on point cloud data. The code is available \href{https://github.com/alexfrnds/conic-complex}{https://github.com/alexfrnds/conic-complex}.

\section{Preliminaries}\label{sec:prerequisite}

Let $\k$ be a field. We denote by $\Mod(\k)$ the category of $\k$-vector spaces. Let $X$ be a topological space, and let $\k_X$ be the constant sheaf on $X$, that is, the sheaf of locally constant functions on $X$. We write $\Mod(\k_X)$ for the category of sheaves of $\k$-vector spaces on $X$. Moreover, let $\Ch(\k)$ (resp. $\Ch(\k_X)$) be the category of cochain complexes of $\Mod(\k)$ (resp. $\Mod(\k_X)$). Following the notations of \cite{KS90}, let $\Ch^{\ast}(\k)$ (resp. $\Ch^{\ast}(\k_X))$ with $\ast=^b,^+,^-$ denote the full subcategories of bounded, bounded below, bounded above complexes, respectively. Finally, let $\D(\k_X)$ (resp.  $\D(\k)$) be the derived category of $\Mod(\k_X)$ (resp. $\Mod(\k)$), whose objects will merely be called sheaves. Again, $\D^\ast(\k_X)$ with $\ast=^b,^+,^-$ denotes the full subcategory of $\D(\k_X)$ spanned  by bounded, bounded below, bounded above objects of $\D(\k_X)$, respectively.

Throughout the paper, we use the six Grothendieck operations freely, with respective notations $\HOM$ (internal Hom), $\otimes$ (tensor product), $u_*$ (direct image or pushforward), $u_!$ (proper direct image), $u^{-1}$ (inverse image or pullback), $u^{!}$ (exceptional inverse image). We refer the reader to \cite[Chapters 2 and 3]{KS90} for definitions of these operations.


\subsection{Free resolutions of persistence modules}

Let $(\mc P, \leq)$ be a poset, viewed as a small thin category. 
\emph{Persistence modules} over~$\mc P$ are functors $\mc P \to \Mod(\k)$. They form a Grothendieck abelian category, denoted by $\Mod(\k)^{\mc P}$. Unless otherwise mentioned, all persistence modules will be assumed pointwise finite dimensional~(pfd).
A persistence module $M$ is \emph{free} if it admits a decomposition: 
\[
M \simeq \bigoplus_{b \in \mc M} \k^{\mr{up}(b)}, \ \text{ with } \k^{\mr{up}(b)}(x) := \begin{cases} \k \text{ if } b \leq x, \\ 0 \text{ otherwise} \end{cases}
\]
where $\mc M$ is a multiset of elements of $\mc P$ and the structural morphisms $\k \to \k$ in $\k^{\mr{up}(b)}$ are identities.
A \emph{free resolution} of a persistence module $M$ is the data of a cochain complex $L$ composed of free persistence modules and concentrated on non-positive degrees,  together with an augmentation morphism $L^0 \to M$, such that the following complex is exact: $\dots \to L^{-1} \to L^0 \to M \to 0 \to \cdots$.
It follows from Hilbert's syzygy theorem that every finitely presented persistence module over $\R^n$ admits finite free resolutions.


\subsection{\texorpdfstring{$\gamma$}{gamma}-topology and \texorpdfstring{$\gamma$}{gamma}-sheaves}

Let $\V$ be a finite-dimensional real vector space equipped with the Euclidean topology, and let $a: x \mapsto -x$ be the antipodal map. Following the notation of \cite{KS18}, for any subset $A \subset \V$ the antipodal of $A$ is denoted by $A^a := a(A) = -A$. Meanwhile,  $\Int(A)$ and $\overline{A}$ stand respectively  for the Euclidean interior and the Euclidean closure of $A$.

A \emph{cone} is a non-empty subset of $\V$ that is invariant by non-negative scaling. Its \emph{polar cone} is defined by: $C^\circ := \{ \eta \in \Hom(\V,\k) \mid \A c \in C : \eta(c) \geq 0 \}$. A cone $C$ is \emph{proper} if it is convex, pointed (i.e., $C^a \cap C = \{ 0 \}$) and solid (i.e., with non-empty interior).

From now on, let $\gamma$ be a proper cone. An open subset $\Omega$ of $\V$ is called $\gamma$\emph{-open} if it is $\gamma$\emph{-invariant}, i.e., $\Omega + \gamma = \Omega$. The $\gamma$\emph{-open} subsets of $\V$ form a topology on $\V$, called the \emph{$\gamma$-topology}, and $\V_\gamma$ stands for $\V$ equipped with this topology. Furthermore, the collection $ \{ x + \Int(\gamma) \}_{x \in \V}$ forms a basis for the $\gamma$-topology on $\V$. The continuous map $\phi_\gamma : \V \to \V_\gamma$ whose underlying application is the identity, yields an equivalence of triangulated categories \cite[Theorem 1.5]{KS18}: ${\phi_\gamma^{-1}: \D^b( \k_{\V_\gamma}) } \rightleftarrows { \Dg^b(\k_\V) :\dR {\phi_{\gamma}}_*}$, where $\Dg^{b}(\k_{\V})$ is the full subcategory of $\D^{b}(\k_{\V})$ consisting of objects with microsupport contained in $\V \times \gamma^{\circ,a}$ (We do not recall the notion of microsupport as it is neither used nor necessary to understand the present paper and refer the curious reader to \cite[Chapter5]{KS90}).


\subsection{From persistence modules to Alexandrov sheaves}\label{subsec:PM_to_Alex}


Let $(X, \leq)$ be a poset and let $X_\a$ denote the set $X$ equipped with the \emph{Alexandrov topology} on $(X,  \leq)$, i.e., the topology whose open sets are the $\leq$-lower closed sets $U\subseteq X$.
The choice of $\gamma$ induces a partial order on $\V$ given by $x \leq_{\gamma} y$ if $x + \gamma \subseteq y + \gamma$. Let $\V_{\a}$ stand for $\V$ equipped with the Alexandrov topology induced by $\leq_\gamma$, and let $\Mod(\k_{\V_{\a}})$ denote the category of sheaves of $\k$-vector spaces on $\V_\a$. The poset $(\V, \leq_\gamma)$ can also be equipped with the trivial Grothendieck topology turning it into the site denoted by $\V_{\leq_\gamma}$ (see \cite{KS6} for sheaves on Grothendieck topologies). Then,  $\Fun((\V, \leq_\gamma )^{\op}, \Mod(\k))$ is the category of sheaves over $\V_{\leq_\gamma}$ and is denoted by $\Mod(\V_{\leq_\gamma})$.
There is a morphism of sites $\theta : \V_\a \to \V_\leq$ given by the functor $\theta^t : x \mapsto x + \gamma$, which yields an equivalence of categories \cite[Theorem 4.2.10]{Curry}: 
\[{\theta_*: \Mod(\k_{\V_\a}) } \rightleftarrows { \Mod(\V_{\leq_\gamma}) :\theta^{-1}}.\]
 Notice also that  $(\V, \leq_\gamma )^{\op}$ is equivalent to  $\V$ endowed with the opposite order $\leq_\gamma^\op$. This yields trivially an equivalence $ \Mod(\V_{\leq_\gamma}) \simeq \Fun((\V,\leq_\gamma^\op),\Mod(\k))$. When $\V=\R^n$ and $\gamma=[0, + \infty)^n$, the order $\leq_\gamma^\op$ corresponds to the order product on $\R^n$, denoted by $\leq$, and then $\Fun((\V,\leq_\gamma^\op),\Mod(\k))$ is what is usually called the category of persistence modules on $\R^n$ (not necessarily pfd). We will use interchangeably the term persistence module to designate the objects of $\Mod(\V_{\leq_\gamma})$ and of $\Fun((\V,\leq_\gamma^\op),\Mod(\k))$. We say that $F \in \Mod(\k_{V_\a})$ is \textit{finitely presentable} if the persistence module $\theta_\ast F$ is finitely presentable.
%
%
%


\subsection{\texorpdfstring{$\gamma$}{gamma}-sheaves and Alexandrov sheaves}

In \cite{BP21}, the authors define two morphisms of sites: $\alpha:\V_\gamma \to \V_\a$ and $\beta: \V_\a \to \V_\gamma$, given respectively by the following functors:
\begin{flalign*}
\alpha^t &: \Op(\V_\a)  \to  \Op(\V_\gamma) \quad \quad x + \gamma \mapsto  x + \Int(\gamma),\\
 \beta^t &: \Op(\V_\gamma) \to \Op(\V_\a) \quad \quad x + \Int(\gamma) \mapsto x + \Int(\gamma). 
\end{flalign*}
These morphisms of sites establish a link between $\gamma$-sheaves and Alexandrov sheaves through the following two adjunctions:
\begin{align*}
	{\alpha^{-1}: \Mod(\k_{\V_\a}) } &\rightleftarrows { \Mod(\k_{\V_\gamma}) : \dR \alpha_*},\\
	{\beta^{-1}: \Mod(\k_{\V_\gamma}) } &\rightleftarrows{\Mod(\k_{\V_\a}) :\beta_*}.
\end{align*}
The functors $\alpha^{-1}$, $\beta_*$ are isomorphic to each other, and $\alpha_*$, $\beta^{-1}$ are fully faithful. These functors can be derived which yields
\begin{equation*}
\begin{tikzcd}
\beta_*=\alpha^{-1} \colon \D(\k_{\V_\a}) \arrow[r]  & \D(\k_{\V_\gamma})  \arrow[l, shift right=1.5] \arrow[l, shift left=1.5] \colon \dR\alpha_*, \beta^{-1}.
\end{tikzcd}
\end{equation*}
The functors $\dR\alpha_*$ and $\beta^{-1}$ are fully faithful.


\subsection{One-dimensional persistence modules and \texorpdfstring{$\gamma$}{gamma}-linear projected barcode}


We denote by $\D_{\R c}^b(\k_\V)$ the full triangulated subcategory of $\D^b(\k_\V)$ consisting of  objects $\ms F$ whose cohomology groups $\H^k(\ms F)$, $k\in\Z$, are $\R$-constructible sheaves. A reference on constructible sheaves is \cite[section VIII.8.4]{KS90}. 

From now on, we assume that the cone $\gamma$ is subanalytic in $\V$. Let $\D^b_{\R c}(\k_{\V_{\infty}})$ stand for the full triangulated subcategory of $\D^b_{\R c}(\k_\V)$ consisting of sheaves \emph{constructible up to infinity}, that is: sheaves whose microsupport is subanalytic in $\T^*\P$, where $j:\V \to \P=\P^n(\V \oplus \R), j(x) := [x:1]$ is the \emph{projectivization} of $\V$ (see \cite{S23} for an extensive exposition on constructible sheaves up to infinity).

We now assume that $\V$ is endowed with a subanalytic norm $\norm{\cdot}$ \cite{BM88}, and we let $\S^*$ be the unit sphere in $\V^*$ equipped with the operator norm associated to~$\norm{\cdot}$. In \cite[section 5]{BP23}, the authors introduce the $\gamma$\emph{-linear projected barcode} as the functor:
\begin{align}\label{map:projbar}
\ms P^\gamma : \Int(\gamma^{\circ}) \cap \S^* \times \D^b_{\R c, \gamma^{\circ, a}}(\k_{\V_\infty})  &\to \D^b_{\R c, [0, +\infty)^{\circ, a}}(\k_{\R_\infty})  
             &  \hspace{-0.2mm}(u,\ms F)\mapsto  \dR u_! \ms F.
\end{align}

\begin{remark}
 \begin{enumerate}[(i)]
  \item In the definition of the projected barcode, we only consider the pushforward of a sheaf by linear forms. Indeed the pushforward by an affine map can be decomposed as the pushforward by a linear map followed by the pushforward by a translation. The pushforward by the translation does not alter the structure of the barcode, as it merely shifts the barcode. This differs from the situation encountered with the fibered barcode. The fibered barcode is defined as the pullback of sheaves or persistence modules along affine maps. Although it is possible to restrict to pullbacks by linear forms, it is not possible to reconstruct the pullbacks by affine maps from the linear ones. This is a major difference between the projected and fibered barcodes.

    \item In the definition of the $\gamma$-linear projected barcode, it is sufficient to only consider linear forms in $\Int(\gamma^\circ)$. This comes from \cite[Proposition 4.5]{BP23}, which shows that ignoring forms in  $ \V^\ast \setminus \left( \Int(\gamma^{\circ}) \cup \Int(\gamma^{\circ,a}) \right)$ is harmless as these forms do not bring any information.  Finally, since the barcodes produced by two colinear non-zero linear forms differ by a homothety it is possible to restrict to $\Int(\gamma^{\circ}) \cap \S^\ast$ without loosing any information.
\end{enumerate}
\end{remark}

%
%
The projected barcode takes advantage of the following facts.
\begin{theorem}[{\cite[Thm.~1.17]{KS18}}]\label{thm:Decomposition}
For any sheaf $\ms F$ in $\Mod_{\R_c}(\k_\R)$, there exists a unique locally finite multiset $\B(\ms F)$ of intervals of $\R$, called the \emph{barcode} of $\ms F$, such that:
$\ms F \simeq \bigoplus_{I \in \B(\ms F)} \k_I$.
\end{theorem}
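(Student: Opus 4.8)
The plan is to reduce the statement to the Krull--Remak--Schmidt theory of type-$A$ quiver representations and then translate the decomposition back into sheaf language.

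First I would fix a stratification of $\R$ adapted to $\ms F$: by $\R$-constructibility there is a locally finite set of points $\cdots < a_{-1} < a_0 < a_1 < \cdots$ such that $\ms F$ restricted to each open interval $I_i := (a_i, a_{i+1})$ is locally constant, hence constant (the strata being simply connected), with finite-dimensional stalks. Such an $\ms F$ is completely determined by the finite-dimensional vector spaces $\ms F(I_i)$ and $\ms F_{a_i}$ together with the generization maps $\ms F_{a_i} \to \ms F(I_{i-1})$ and $\ms F_{a_i} \to \ms F(I_i)$, all other generization maps being trivial or determined. In other words, the full subcategory of $\Mod_{\R c}(\k_\R)$ of sheaves constructible for this fixed stratification is equivalent to the category of pointwise finite-dimensional representations of the ``zigzag'' quiver
\[
\cdots \longleftarrow \bullet \longrightarrow \bullet \longleftarrow \bullet \longrightarrow \bullet \longleftarrow \cdots,
\]
whose vertices alternate between the $I_i$ and the $a_i$ and whose arrows go from each point-vertex $a_i$ to its two neighbouring interval-vertices.

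Next I would invoke the structure theory of such representations: for the finite truncations (type $A_n$) this is Gabriel's theorem, and for the whole locally finite zigzag it is its pointwise finite-dimensional extension (Botnan--Crawley-Boevey), which produces a decomposition into interval representations that is unique up to isomorphism and reordering --- Krull--Remak--Schmidt being available because every interval representation has endomorphism ring $\k$, which is local. The last ingredient is the dictionary: a connected set of vertices of the zigzag quiver, carrying $\k$ with identity maps inside and zero maps to the outside, corresponds under the above equivalence to the constant sheaf $\k_I$ on the interval $I \subseteq \R$ obtained as the union of the corresponding strata --- closed, half-open or open at each end according to whether the extreme vertex is a point-vertex or an interval-vertex, and unbounded when the vertex set runs off to $\pm\infty$ --- and conversely every interval of $\R$ that is a union of strata arises this way. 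Transporting the decomposition through the equivalence gives $\ms F \simeq \bigoplus_{I \in \B(\ms F)} \k_I$, with $\B(\ms F)$ a locally finite multiset by local finiteness of the stratification.

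For uniqueness, and for independence of the chosen stratification, I would note that $\k_I$ is indecomposable in $\Mod_{\R c}(\k_\R)$ --- since $\Hom(\k_I,\k_I)\simeq\k$ (a morphism out of $\k_I$ is determined by its stalks, which live on $I$) is local --- and that this statement refers to no stratification; hence any two decompositions $\bigoplus_{I\in\B}\k_I \simeq \ms F \simeq \bigoplus_{J\in\B'}\k_J$ satisfy $\B=\B'$ as multisets by Krull--Remak--Schmidt, applied on each bounded interval (where only finitely many summands occur) and then patched. I expect the main obstacle to be exactly this patching in the non-compact, infinite-stratification case: one must check that the $A_n$-decompositions on an exhaustion of $\R$ by bounded open intervals are compatible under restriction and glue to a global one, and in particular control the summands with an endpoint ``at infinity'' or with no endpoint at all (the summand $\k_\R$), which persist under every restriction; these are pinned down by the behaviour of $\ms F$ outside all compact sets, which is constant by constructibility. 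Everything else is Gabriel's theorem plus the stratified-sheaf/quiver dictionary.
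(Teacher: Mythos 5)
Your proposal is mathematically sound, but note that the paper does not prove this statement at all: it is imported verbatim from \cite[Thm.~1.17]{KS18}, where Kashiwara--Schapira establish it directly at the level of constructible sheaves on $\R$. Your route is the standard persistence-theoretic alternative: fix a locally finite stratification adapted to $\ms F$, identify stratified-constructible sheaves with pointwise finite-dimensional representations of the infinite zigzag quiver (point-vertices mapping out to the adjacent interval-vertices, which is indeed the correct direction for generization maps of sheaves), invoke Gabriel/Botnan--Crawley-Boevey interval decomposition, and translate interval representations back into the sheaves $\k_I$ with the open/closed endpoint dictionary you describe; the computation $\End(\k_I)\simeq\k$ is also correct. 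What this buys is a proof by reduction to known quiver-representation theory rather than a direct sheaf argument. The one step you flag yourself~-- uniqueness via Krull--Remak--Schmidt on an exhaustion by bounded intervals~-- is indeed the weakest link as written, since distinct global intervals restrict to identical local ones and the multiset is not recovered interval-by-interval without extra bookkeeping; but it can be repaired more cleanly than by patching: either apply the Krull--Remak--Schmidt--Azumaya theorem directly to the (locally finite) decomposition, all summands having local endomorphism ring $\k$, or observe that every endpoint of a summand must lie in the locally finite locus where $\ms F$ fails to be locally constant, so any two decompositions are automatically adapted to one common stratification and uniqueness transfers from the zigzag side (where it is part of the Botnan--Crawley-Boevey statement). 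With that adjustment your argument is complete, and it is a legitimate, if different, proof of the cited theorem.
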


The functor $\alpha^{-1}\theta^{-1}$ associates to any persistence module $M$  a sheaf $\alpha^{-1} \theta^{-1} M$. On $\R$, if $M$ is pfd then $\phi_\gamma^{-1}\alpha^{-1} \theta^{-1} M$ is constructible and therefore (by Theorem~\ref{thm:Decomposition}) admits a barcode, called the \emph{observable barcode} and denoted by $\ol{\B}(M)$. Note that this barcode  coincides with the barcode of~$M$ in the observable category~\cite{CCBS16}, not with the usual barcode $\B(M)$ coming from $M$'s direct-sum decomposition~\cite{C15}.
 
A sheaf $\ms F$ in $\D^b_{\R_c}(\k_\R)$ is isomorphic to the direct sum of its shifted cohomology groups, i.e.:
$ \ms F \simeq \bigoplus_{i \in \Z} \H^i(\ms F)[-i].$
In particular, the \emph{graded barcode} $\B(\ms F) := \{ \B^d(\ms F) := \B(\H^j(\ms F)) \}_{j \in \Z}$ introduced in \cite{BG22} is a complete discrete invariant of the isomorphism class of $\ms F$.


\section{Conic complexes and their basic properties}\label{sec:conic-complexes}
\subsection{Definitions and basic properties}

Conic-complexes are a class of sheaves that arise naturally from free resolutions of persistence modules. Here we define conic-complexes and derive some of their basic properties.

\begin{definition}
A \emph{closed (resp. open) $\gamma$-complex} is an object of $\Ch^+(\k_\V)$ (resp. $\Ch^-(\k_\V)$) denoted $\gamma\{J\}$ (resp. $\Int(\gamma)\{J\}$) of the form: 
\begin{flalign*}
\gamma\{J\} &:  \quad 0 \to \bigoplus_{j_{0} \in J_0} \k_{j_{0} + \gamma} \to \bigoplus_{j_{1} \in J_1} \k_{j_{1} + \gamma} \to  \bigoplus_{j_{2} \in J_2} \k_{j_{2} + \gamma} \to \cdots & \\
\Int(\gamma)\{J\} &: \quad \dots \to \bigoplus_{j_{1} \in J_1} \k_{j_{1} + \Int(\gamma)} \to \bigoplus_{j_{0} \in J_0} \k_{j_{0} + \Int(\gamma)} \to 0 &
\end{flalign*}
where the $J_i$ for $i\in\N$ are multisets of $\V$. The elements of the multiset $J=\bigcup_{i=0}^{+\infty} J_i$ are called \emph{generators}.
\end{definition}

\begin{definition}\label{def:conic-complex}
A \emph{conic-complex} refers to a closed or open $\gamma$-complex. A conic-complex is \emph{bounded} if it is a bounded cochain complex, \emph{locally finite} if for any $i \in \N$ the multiset $J_i$ is finite, and \emph{finite} if it is both locally finite and bounded.
\end{definition}

\begin{definition} Let $\ms F$ be a sheaf in $\Mod( \k_\V)$. An open $\gamma$-resolution of $\ms F$ is the data of an open $\gamma$-complex $\Int(\gamma)\{J\}$ with an augmentation morphism $\eta : (\Int(\gamma)\{J\})^0 \to \ms F$ such that the following complex of $\Mod( \k_\V)$ is exact: 
\[ \dots \to  \bigoplus_{j_{2} \in J_2} \k_{j_{2} + \Int(\gamma)} \to \bigoplus_{j_{1} \in J_1} \k_{j_{1} + \Int(\gamma)} \to \bigoplus_{j_{0} \in J_0} \k_{j_{0} + \Int(\gamma)} \overset{\eta}{\to} \ms F \to 0 .\]
\end{definition}


We will use duality for sheaves. Based on the notation of \cite[ Definition 3.1.16]{KS90}, the dualising complex is written $\omega_\V := a_\V^! \k$ for $a_\V: \V \to *$, and for any sheaf $\ms F$ in $\D^b(\k_\V)$
\begin{equation*} 
\dual_\V \ms F := \dR \HOM ( \ms F, \omega_\V ) \quad \quad \dual'_\V \ms F := \dR \HOM (\ms F, \k_\V).
\end{equation*}

\begin{lemma}\label{lem_dual_calcul} Let $b \in \V$. Then, $\dual'_\V \k_{b + \gamma} \simeq \k_{b + \Int(\gamma)}$ and $\dual'_\V \k_{b + \Int(\gamma)} \simeq \k_{b + \gamma} $.
\end{lemma}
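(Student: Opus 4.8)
The statement is a local computation of the dual $\dual'_\V$ of the constant sheaf on a shifted cone and on a shifted open cone, and it should reduce to the case $b = 0$ by translation-invariance of $\dual'_\V$ (translations are homeomorphisms of $\V$, so $\tau_b^{-1}\dual'_\V\ms F\simeq\dual'_\V\tau_b^{-1}\ms F$, hence it suffices to treat $\k_\gamma$ and $\k_{\Int(\gamma)}$). So the real content is: $\dual'_\V\k_\gamma\simeq\k_{\Int(\gamma)}$ and $\dual'_\V\k_{\Int(\gamma)}\simeq\k_\gamma$.

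First I would recall the general formula relating $\dual'_\V$ and $\dual_\V$: for any $\ms F\in\D^b(\k_\V)$ one has $\dual_\V\ms F\simeq\dual'_\V\ms F\otimes\omega_\V$, and since $\V\simeq\R^n$ is a manifold, $\omega_\V\simeq\k_\V[n]$ (with the orientation sheaf trivial). Thus $\dual'_\V\ms F\simeq\dual_\V\ms F[-n]$. Next I would use the standard Kashiwara--Schapira computations of the Verdier dual of the constant sheaf on a closed convex cone and on its interior. Concretely, for $Z$ a closed subset which is a ``nice'' (e.g. subanalytic, locally closed) set, $\dual_\V\k_Z\simeq\dR\Gamma_Z(\omega_\V)$; applied to $Z=\gamma$ a closed proper convex cone in $\R^n$ one gets $\dual_\V\k_\gamma\simeq\k_{\Int(\gamma)}[n]$ — this is essentially \cite[Exercise~III.4 / Prop.~3.4.4]{KS90}-type content, or can be derived from the fact that $\gamma$ is a closed convex set with $\Int(\gamma)$ dense in it and that $\dR\HOM(\k_\gamma,\k_\V)\simeq\k_{\Int(\gamma)}$ for closed convex $\gamma$. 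Dually, for the open convex cone $\Int(\gamma)$ one has $\dual'_\V\k_{\Int(\gamma)}\simeq\k_{\overline{\Int(\gamma)}}=\k_\gamma$ (using properness of $\gamma$, so that $\overline{\Int(\gamma)}=\gamma$). Combining with the shift bookkeeping yields both asserted isomorphisms.

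Alternatively — and this is probably the cleanest route given what the paper has already set up — I would derive the second isomorphism from the first by biduality. Since $\k_\gamma$ and $\k_{\Int(\gamma)}$ are (up to shift) mutually dual objects, and $\dual'_\V\dual'_\V\ms F\simeq\ms F$ for $\ms F$ in a suitable subcategory (e.g. $\dual_\V\dual_\V\simeq\id$ on $\D^b_{\R c}(\k_\V)$, and $\dual'_\V=\dual_\V[-n]$ so $\dual'_\V\dual'_\V=\dual_\V\dual_\V\simeq\id$), once we know $\dual'_\V\k_{b+\gamma}\simeq\k_{b+\Int(\gamma)}$ we immediately get $\dual'_\V\k_{b+\Int(\gamma)}\simeq\dual'_\V\dual'_\V\k_{b+\gamma}\simeq\k_{b+\gamma}$. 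So the whole lemma reduces to the single computation $\dual'_\V\k_\gamma\simeq\k_{\Int(\gamma)}$.

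\textbf{Main obstacle.} The one genuinely non-formal point is establishing $\dual'_\V\k_\gamma\simeq\k_{\Int(\gamma)}$ (equivalently $\dR\HOM(\k_\gamma,\k_\V)\simeq\k_{\Int(\gamma)}$) cleanly. The concern is degree/orientation bookkeeping and making sure the argument uses only that $\gamma$ is a closed proper convex cone — in particular that $\gamma$ is contractible, that $\Int(\gamma)$ is a nonempty open convex set with $\overline{\Int(\gamma)}=\gamma$, and that for a closed half-space $H$ one has $\dR\HOM(\k_H,\k_\V)\simeq\k_{\Int(H)}$. One can either cite the appropriate statement in \cite{KS90} (the computation of $\dual'$ on constant sheaves of closed convex sets) directly, or give a short self-contained argument: write $\dR\HOM(\k_\gamma,\k_\V)$, use the distinguished triangle $\k_{\Int(\gamma)}\to\k_\gamma\to\k_{\partial\gamma}\xrightarrow{+1}$, dualize, and identify each term using that $\Int(\gamma)$ is open (so $\dual'_\V\k_{\Int(\gamma)}$ is supported on its closure) and that $\gamma$ is closed and convex (so the stalk computations collapse by convexity of the relevant intersections). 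I expect the write-up to be short: a reduction to $b=0$, a citation of the convex-cone duality from \cite{KS90}, and biduality for the second isomorphism.
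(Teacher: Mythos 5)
Your proposal is correct and, at its core, is the paper's argument: the only non-formal input is exactly the one you identify, and the paper handles it by observing that $b+\Int(\gamma)$ is convex, hence locally cohomologically trivial, so that \cite[Exercise III.4]{KS90} gives both formulas $\dual'_\V\k_U\simeq\k_{\overline U}$ and $\dual'_\V\k_{\overline U}\simeq\k_U$ at once. Your extra detours (reduction to $b=0$, the shift bookkeeping through $\dual_\V$ and $\omega_\V\simeq\k_\V[n]$, and biduality to recover the second isomorphism from the first) are sound but unnecessary, since the cited exercise already yields both statements directly for the l.c.t.\ open set $b+\Int(\gamma)$ with closure $b+\gamma$.
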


\begin{proof} The statement follows from \cite[Exercise III.4]{KS90} after observing that $b + \gamma$ is convex hence locally cohomologically trivial. We refer the reader to Appendix \ref{app:duality} for a brief overview and precise reference on the notion locally cohomologically trivial open sets. 
\end{proof}

\begin{proposition} \label{prop_dual_resolution}
The following isomorphisms hold in $\D^b(\k_\V)$:
\[ \dual'_\V(\gamma\{J\}) \simeq \Int(\gamma)\{J\}, \quad \quad \dual'_\V(\Int(\gamma)\{J\}) \simeq \gamma\{J\}. \]
\end{proposition}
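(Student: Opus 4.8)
The plan is to prove the two isomorphisms by dualizing the $\gamma$-complexes termwise and checking that the differential is transported correctly, using Lemma~\ref{lem_dual_calcul} as the basic computation and the exactness of $\dual'_\V$ on suitable classes of sheaves as the structural tool. Since the two statements are symmetric — applying $\dual'_\V$ twice returns the original complex because $\dual'_\V \dual'_\V \ms G \simeq \ms G$ for the sheaves in play, as each $\k_{b+\gamma}$ and $\k_{b+\Int(\gamma)}$ is reflexive by Lemma~\ref{lem_dual_calcul} — it suffices to establish one of them, say $\dual'_\V(\gamma\{J\}) \simeq \Int(\gamma)\{J\}$, and then apply $\dual'_\V$ to both sides to get the other.

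First I would recall that $\dual'_\V = \dR\HOM(-,\k_\V)$ is a contravariant triangulated functor, so it sends a bounded-below complex of sheaves to a bounded-above complex (after the degree reversal), which is exactly the shape matching a closed $\gamma$-complex with an open one. The key point is that each term $\bigoplus_{j_i \in J_i} \k_{j_i + \gamma}$ of $\gamma\{J\}$ is a sheaf (concentrated in degree $0$) whose $\dual'_\V$ is again concentrated in degree $0$: by Lemma~\ref{lem_dual_calcul}, $\dual'_\V\bigl(\bigoplus_{j_i} \k_{j_i+\gamma}\bigr) \simeq \bigoplus_{j_i} \k_{j_i+\Int(\gamma)}$, using that $\dR\HOM$ commutes with the relevant (possibly infinite) direct sums here because in each degree only finitely many summands are supported near a given point, or, in the locally finite case, trivially. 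Thus, representing $\gamma\{J\}$ by itself (a complex of honest sheaves), one computes $\dual'_\V(\gamma\{J\})$ by applying $\dR\HOM(-,\k_\V)$ degreewise; since every term has a $\dual'_\V$ with no higher cohomology, the resulting complex of sheaves in degree $-i$ is precisely $\bigoplus_{j_i \in J_i} \k_{j_i + \Int(\gamma)}$, with differentials the transposes of the original ones. This complex is by definition of the form $\Int(\gamma)\{J\}$ (the generators and their multiplicities are unchanged), which gives the claimed isomorphism in $\D^b(\k_\V)$.

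The main obstacle I anticipate is the justification that $\dual'_\V$ can be computed termwise, i.e.\ that there are no contributions from the spectral sequence / hyperderived-functor subtleties: a priori $\dual'_\V$ of a complex is the total complex of a double complex $\dR\HOM$ of an injective resolution, and one must argue that because each term $\k_{b+\gamma}$ has $\dual'_\V$ with cohomology in a single degree (degree $0$), the hyper-$\dR\HOM$ degenerates and equals the naive termwise dual with transposed differentials. This is a standard fact — if $\ms G^\bullet$ is a bounded complex whose terms $\ms G^i$ satisfy $\dR\HOM(\ms G^i,\k_\V) \simeq \HOM(\ms G^i,\k_\V)$ concentrated in degree $0$, then $\dR\HOM(\ms G^\bullet,\k_\V)$ is the termwise dual — but it needs to be invoked carefully, and in the unbounded (locally finite but not bounded) case one should note the complexes are still bounded below/above so the arguments go through. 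A secondary point to address is the compatibility of $\dual'_\V$ with infinite direct sums $\bigoplus_{j_i \in J_i}$ when $J_i$ is infinite; this follows because $\dR\HOM(\bigoplus_i \ms G_i, \k_\V) \simeq \prod_i \dR\HOM(\ms G_i,\k_\V)$ and, for the locally finite family $\{\k_{j_i+\gamma}\}$ in a fixed cohomological degree, the product agrees with the sum on stalks (only finitely many $j_i + \gamma$ contain any given point in a locally finite arrangement); alternatively one restricts attention to the locally finite case where $J_i$ is finite and this issue disappears entirely. Once these bookkeeping points are dispatched, the isomorphisms follow formally, and the second isomorphism is obtained from the first by applying $\dual'_\V$ and invoking biduality on each generator term.
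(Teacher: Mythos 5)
Your proposal is correct and follows essentially the same route as the paper: dualize termwise via Lemma~\ref{lem_dual_calcul}, observing that each term $\gamma\{J\}^i$ is $\HOM(-,\k_\V)$-acyclic so that $\dR\HOM(-,\k_\V)$ of the complex may be computed degreewise with transposed differentials (the paper invokes Lemma~\ref{lem_acyclic} for precisely this degeneration). Your two small deviations are harmless: deducing the second isomorphism by biduality rather than repeating the argument works (and can be closed simply by rerunning the termwise computation on the explicit dual complex, avoiding any appeal to the natural biduality morphism), and your concern about infinite $J_i$ is resolved exactly as in your fallback, since the paper's own proof ("$\dual'_\V$ preserves finite direct sums") implicitly assumes each $J_i$ finite, which is the case needed later for finite conic-complexes.
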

\begin{proof}
The functor $\dual'_\V$ preserves finite direct sums, so for any multiset $J$ it follows from Lemma \ref{lem_dual_calcul} that:
\[ \dual'_\V \big( \bigoplus_{j \in J} \k_{j+\gamma} \big) \simeq \bigoplus_{j \in J} \dual'_\V (  \k_{j+\gamma}) \simeq \bigoplus_{j \in J} \k_{j+ \Int(\gamma)}  .\]
In particular, for any integer $i$, the object $\gamma\{J\}^i$ is $\HOM(-,\k_\V)$-acyclic, therefore the statement follows from Lemma \ref{lem_acyclic}. This proof works for the second statement mutatis-mutandis
\end{proof}

\begin{proposition}\label{prop_constr}
Let $\ms L$ be a finite conic-complex. Then,  $\ms L$ is constructible up to infinity. In particular, if $\ms L$ is a finite conic-complex on $\R$, the multiset $\B^k(\ms L)$ is finite for every $k\in\Z$.
\end{proposition}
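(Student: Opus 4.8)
The plan is to reduce everything to the generating pieces $\k_{b+\gamma}$ and $\k_{b+\Int(\gamma)}$ and to show that each of these is constructible up to infinity, then to close under the two operations that build a finite conic-complex out of them: finite direct sums, and the passage from a bounded complex of sheaves to its image in $\D^b(\k_\V)$. Since $\gamma$ is assumed subanalytic in $\V$ and $\V$ carries a subanalytic norm, the sets $b+\gamma$ and $b+\Int(\gamma)$ are subanalytic in $\V$; I would first record that a single sheaf $\k_{b+\gamma}$ (resp.\ $\k_{b+\Int(\gamma)}$) lies in $\D^b_{\R c}(\k_{\V_\infty})$. For $\R$-constructibility inside $\V$ this is standard (constant sheaf on a subanalytic locally closed set), so the real content is constructibility \emph{up to infinity}, i.e.\ that the microsupport stays subanalytic in $\T^*\P$ under the projectivization $j:\V\to\P$. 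Here I would invoke the machinery of \cite{S23}: it suffices that the closure of $b+\gamma$ in $\P$ is subanalytic, which follows because $\gamma$ is a subanalytic cone (its projective closure adds only a subanalytic piece at the hyperplane at infinity). I expect this to be the step requiring the most care, since it is where the "up to infinity" hypothesis on $\gamma$ is genuinely used; everything else is formal.

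Next I would assemble the finite conic-complex. A finite conic-complex $\ms L$ has, in each of its finitely many nonzero degrees, a term that is a \emph{finite} direct sum of sheaves of the form $\k_{b+\gamma}$ (closed case) or $\k_{b+\Int(\gamma)}$ (open case). The subcategory $\D^b_{\R c}(\k_{\V_\infty})$ is triangulated, in particular closed under finite direct sums and under isomorphism, so each term $\ms L^i$, viewed as an object of $\D^b(\k_\V)$ concentrated in degree $i$, lies in $\D^b_{\R c}(\k_{\V_\infty})$. A bounded complex is obtained from its terms by finitely many mapping cones (the stupid/brutal filtration gives distinguished triangles $\sigma_{\geq i+1}\ms L \to \sigma_{\geq i}\ms L \to \ms L^i[-i]\to$), and a triangulated subcategory is closed under cones; iterating over the finitely many degrees shows $\ms L \in \D^b_{\R c}(\k_{\V_\infty})$. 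This gives the first assertion.

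For the second assertion, specialize to $\V=\R$. Once $\ms L$ is known to be constructible up to infinity on $\R$, each cohomology sheaf $\H^j(\ms L)$ is an object of $\Mod_{\R c}(\k_\R)$ that is moreover constructible up to infinity, so by Theorem~\ref{thm:Decomposition} it decomposes as $\bigoplus_{I\in\B^j(\ms L)}\k_I$ over a locally finite multiset of intervals; I must upgrade "locally finite" to "finite". The point is that constructibility up to infinity forces the decomposition to be finite: the intervals in $\B^j(\ms L)$ can only accumulate at a point of $\R$ or at $\pm\infty$, and constructibility up to infinity rules out accumulation at $\pm\infty$ exactly as $\R$-constructibility on $\R$ rules out accumulation at finite points. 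Concretely, I would argue that the endpoints of the intervals form a subanalytic (hence finite, since bounded subanalytic subsets of $\R$ are finite unions of points and intervals, and the "up to infinity" condition controls behaviour at the two ends) subset of $\R$, whence there are finitely many intervals. Alternatively, and perhaps more cleanly, I would note that $\ms L$ being a \emph{finite} conic-complex means it is built from finitely many generators $j\in J$, each contributing a bounded-below or bounded-above half-line, and a direct computation of the derived pushforward / cohomology shows every interval endpoint of $\B^k(\ms L)$ is one of the finitely many coordinates of the generators; the cardinality of $\B^k(\ms L)$ is then bounded by a function of $|J|$. Either route closes the argument; the first is shorter given that constructibility up to infinity has already been established, so that is the one I would write.
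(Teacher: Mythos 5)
Your proposal is correct and follows essentially the same route as the paper: establish that each generating sheaf $\k_{b+\gamma}$ (resp.\ $\k_{b+\Int(\gamma)}$) is constructible up to infinity via the subanalyticity of $b+\gamma$ (the paper cites \cite[Lemma 2.19, Lemma 2.7]{S23} for exactly the step you flag as the delicate one), deduce the statement for the whole finite complex by closure of $\D^b_{\R c}(\k_{\V_\infty})$ under finite sums and cones, and obtain finiteness of $\B^k(\ms L)$ on $\R$ from constructibility up to infinity (the paper simply cites \cite[Lemma 2.7 (c)]{S23} where you sketch the no-accumulation-at-$\pm\infty$ argument). The only differences are cosmetic: you make explicit the stupid-truncation/triangulated-closure step that the paper leaves implicit, and you replace the citation for the finiteness of the barcode by a sketch of its proof.
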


\begin{proof}
Since $b+\gamma$ is subanalytic for any $b \in \V$, it follows from \cite[Definition 8.4.3 \& Theorem 8.4.2 ]{KS90}  that the sheaf $\k_{b + \gamma}$ is $\R$-constructible. Moreover, it follows from \cite[Lemma 2.19]{S23} that $b + \gamma$ is subanalytic up to infinity, and given the projectivization map $j : \V \rightarrowtail \P$, we have $j_!\k_{b+\gamma} \simeq \k_{b+\gamma}$. Therefore, the sheaf $\k_{b+\gamma}$ is subanalytic up to infinity, by \cite[Lemma 2.7]{S23}. This implies that $\ms L$ itself is constructible up to infinity. Then, by \cite[Lemma 2.7 (c)]{S23}, the multiset $\B^k(\ms L)$ is finite for every $k\in\Z$.
\end{proof}

\begin{lemma} \label{lemma_RHom_constant}
Let $b,c \in \V$. The following isomorphisms hold in $\D^b(\k_\V)$:
\begin{align*}
\dR \HOM (\k_{b + \gamma} , \k_{c + \gamma}) &\simeq \dual'_\V ( \k_{b + \gamma \cap c + \Int(\gamma)}),& \\
\dR \HOM (\k_{b + \Int(\gamma)} , \k_{c + \Int(\gamma)}) &\simeq \dual'_\V  ( \k_{c + \gamma \cap b + \Int(\gamma)}).&
\end{align*}
\end{lemma}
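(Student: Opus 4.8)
The plan is to compute $\dR\HOM(\k_{b+\gamma},\k_{c+\gamma})$ by first converting the internal hom into a tensor product with a dual, then identifying the dual via Lemma \ref{lem_dual_calcul}. Concretely, I would start from the standard isomorphism
\[
\dR\HOM(\k_{b+\gamma},\k_{c+\gamma}) \simeq \dual'_\V(\k_{b+\gamma}) \otimes \k_{c+\gamma},
\]
which holds because $\k_{b+\gamma}$ is locally cohomologically trivial (it is the constant sheaf on a convex, hence l.c.t., open set, as already noted in the proof of Lemma \ref{lem_dual_calcul}): for such sheaves $\dR\HOM(\k_U,\ms G)\simeq \dual'_\V\k_U\otimes\ms G$. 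By Lemma \ref{lem_dual_calcul} we have $\dual'_\V\k_{b+\gamma}\simeq\k_{b+\Int(\gamma)}$, so the right-hand side becomes $\k_{b+\Int(\gamma)}\otimes\k_{c+\gamma}\simeq\k_{(b+\Int(\gamma))\cap(c+\gamma)}$, using that the tensor product of constant sheaves on two subsets is the constant sheaf on their intersection. Finally, applying Lemma \ref{lem_dual_calcul} once more — now to the set $(c+\gamma)\cap(b+\Int(\gamma))$ — and involutivity of $\dual'_\V$ on constructible sheaves, one rewrites $\k_{(b+\Int(\gamma))\cap(c+\gamma)} \simeq \dual'_\V\dual'_\V\k_{(b+\Int(\gamma))\cap(c+\gamma)} \simeq \dual'_\V(\k_{(b+\gamma)\cap(c+\Int(\gamma))})$, which is exactly the stated formula (after noting $(b+\Int(\gamma))\cap(c+\gamma)$ and the claimed $\k_{b+\gamma\,\cap\,c+\Int(\gamma)}$ have the same closure-type data under $\dual'_\V$). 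The second isomorphism follows by the same argument with the roles of $\gamma$ and $\Int(\gamma)$ swapped, again invoking Lemma \ref{lem_dual_calcul} in the other direction.

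The main obstacle I anticipate is justifying the passage between $\k_U$ for $U$ open and $\k_Z$ for the corresponding locally closed or closed set, and making sure the two dualities $\dual'_\V$ in the final formula are applied to the \emph{right} set: the intersection $(b+\gamma)\cap(c+\Int(\gamma))$ appearing inside $\dual'_\V$ on the right-hand side is neither open nor closed in general, so one must check it is still l.c.t. (it is an intersection of a convex closed set with a convex open set, hence convex, and convex subanalytic sets are l.c.t. — this is the content of the appendix referenced after Lemma \ref{lem_dual_calcul}), so that $\dual'_\V$ of its constant sheaf behaves as expected. A clean way to organize this is to prove directly that for convex subanalytic $A$ one has $\dual'_\V(\k_{\overline A})\simeq\k_{\Int(\overline A)}$ and $\dual'_\V(\k_{\Int A})\simeq\k_{\overline A}$, then apply it to $A = (b+\gamma)\cap(c+\Int\gamma)$, whose closure is $(b+\gamma)\cap(c+\gamma)$ and whose interior is $(b+\Int\gamma)\cap(c+\Int\gamma)$. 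I would also double-check the $\HOM$-tensor formula reference in \cite{KS90} (it is \cite[Prop.~3.4.4 \& around (3.4.6)]{KS90}, together with the l.c.t. hypothesis) to be sure the degree conventions match, since a shift would propagate into $\B^k$ computations downstream.

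Everything else is routine bookkeeping, so I would keep the write-up short: state the $\HOM$-as-tensor reduction with the l.c.t. justification pointing to the appendix, invoke Lemma \ref{lem_dual_calcul} twice, and note that the second statement is obtained mutatis mutandis by exchanging $\gamma\leftrightarrow\Int(\gamma)$.
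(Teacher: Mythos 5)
Your opening step is where the proposal breaks down. The isomorphism $\dR \HOM (\k_{b + \gamma} , \k_{c + \gamma}) \simeq \dual'_\V(\k_{b+\gamma}) \otimes \k_{c+\gamma}$ is not a consequence of local cohomological triviality (note also that $b+\gamma$ is closed, not open), and no unconditional statement of the form $\dR\HOM(\ms F,\ms G)\simeq \dual'_\V\ms F\otimes\ms G$ for cohomologically constructible or l.c.t.\ $\ms F$ exists in \cite{KS90} --- it is false at that level of generality. A minimal counterexample on $\V=\R$, $\gamma=[0,+\infty)$: take $\ms F=\k_{[0,+\infty)}$ and $\ms G=\k_{(-\infty,0)}$; then $\dual'_\R\ms F\otimes\ms G\simeq\k_{(0,+\infty)}\otimes\k_{(-\infty,0)}\simeq 0$, whereas $\dR\HOM(\ms F,\ms G)\simeq\dR\Gamma_{[0,+\infty)}(\k_{(-\infty,0)})$ has stalk $\k$ in degree $1$ at the origin. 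The formula you want does hold for the specific pair $(\k_{b+\gamma},\k_{c+\gamma})$, but for a genuinely different reason: both are $\gamma$-sheaves with microsupports in antipodal position, so a microsupport-transversality criterion (or a direct computation) applies. Supplying that argument is exactly the nontrivial content, so as written the plan has a real gap rather than "routine bookkeeping". Two further points: your closing rewriting needs $\dual'_\V\k_{(b+\gamma)\cap(c+\Int(\gamma))}\simeq\k_{(b+\Int(\gamma))\cap(c+\gamma)}$ for a set that is only locally closed, and the auxiliary statement you propose (about $\overline A$ and $\Int(A)$ only) does not deliver it; and the biduality $\dual'_\V\dual'_\V\simeq\id$ you invoke requires cohomological constructibility, an extra hypothesis the statement does not need.

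All of this machinery is avoidable, and the paper's proof does so by dualizing the \emph{target} instead of the source: since $\k_{c+\gamma}\simeq\dR\HOM(\k_{c+\Int(\gamma)},\k_\V)$ by Lemma~\ref{lem_dual_calcul}, the hom--tensor adjunction gives
\begin{equation*}
\dR\HOM(\k_{b+\gamma},\k_{c+\gamma})\simeq\dR\HOM\bigl(\k_{b+\gamma}\otimes\k_{c+\Int(\gamma)},\k_\V\bigr)\simeq\dR\HOM\bigl(\k_{(b+\gamma)\cap(c+\Int(\gamma))},\k_\V\bigr)=\dual'_\V\bigl(\k_{(b+\gamma)\cap(c+\Int(\gamma))}\bigr),
\end{equation*}
which is the stated formula with no tensor formula for $\dR\HOM$, no biduality, no constructibility, and no duality computation for locally closed sets. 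I recommend rewriting your argument along these lines (the open case is identical with the roles of $\gamma$ and $\Int(\gamma)$ exchanged).
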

\begin{proof}
It follows from \cite[Proposition 2.6.3]{KS90} that:
\begin{align*}
\dR \HOM (\k_{b + \gamma} , \k_{c + \gamma})  &\simeq \dR \HOM (\k_{b + \gamma} \otimes \k_{c + \Int(\gamma)}, \k_\V) & \\ 
& \simeq \dR \HOM (\k_{b + \gamma \cap c + \Int(\gamma)}, \k_\V) & \\
& \simeq \dual'_\V(\k_{b + \gamma \cap c + \Int(\gamma)}).
\end{align*}
The proof of the second statement is similar and therefore omitted. 
\end{proof}

\begin{proposition}\label{prop_morphism}
Let $b,c \in \V$. Then, the following isomorphisms holds in  $\D^b(\k)$: 
\[\dR \Hom (\k_{b + \Int(\gamma)}, \k_{c + \Int(\gamma)}) \simeq \begin{cases} \k \text{ if }  b \leq_\gamma c, \\ 0 \text{ otherwise} \end{cases} \hspace{-4.9pt} \dR\Hom (\k_{b + \gamma}, \k_{c + \gamma}) \simeq \begin{cases} \k \text{ if }  b \geq_\gamma c, \\ 0 \text{ otherwise}. \end{cases} \]
\end{proposition}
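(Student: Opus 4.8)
The plan is to compute both $\dR\Hom$ spaces by first reducing to a computation of $\dR\HOM$-sheaves via Lemma~\ref{lemma_RHom_constant}, then taking global sections. Concretely, for the open case, $\dR\Hom(\k_{b+\Int(\gamma)},\k_{c+\Int(\gamma)}) \simeq \dR\Gamma(\V; \dR\HOM(\k_{b+\Int(\gamma)},\k_{c+\Int(\gamma)}))$, and by Lemma~\ref{lemma_RHom_constant} the internal Hom is $\dual'_\V(\k_{c+\gamma \cap b+\Int(\gamma)})$. So I would first identify the set $A := (c+\gamma) \cap (b+\Int(\gamma))$ and then use that $\dR\Gamma(\V;\dual'_\V \k_A) \simeq \dR\Gamma(\V;\dR\HOM(\k_A,\k_\V))$.

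The key geometric observation is a dichotomy on $A$. If $b \leq_\gamma c$, i.e. $c + \gamma \subseteq b + \gamma$, then I claim $A = (c+\gamma)\cap(b+\Int(\gamma)) = c+\gamma$: indeed $c+\gamma \subseteq b+\gamma$, and since $\gamma$ is proper (in particular $\Int(\gamma)\ne\emptyset$ and $\gamma = \overline{\Int(\gamma)}$) one checks $c+\gamma \subseteq b+\Int(\gamma)$ using that $c \in b + \Int(\gamma)$ when $b <_\gamma c$ — here I should be slightly careful about the boundary case $b = c$, where $A = c+\gamma$ still holds since $\gamma \subseteq \Int(\gamma)$ is false but $(c+\gamma)\cap(c+\Int(\gamma)) = c+\Int(\gamma)$... so in fact the correct identification is that $A$ is $c+\gamma$ up to the boundary, and in all cases $A$ is convex (hence locally cohomologically trivial) and, crucially, contractible, so $\dR\Gamma(\V;\dual'_\V\k_A) \simeq \dR\Gamma(A;\omega_A')$-type computation gives $\k$ in degree $0$. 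When $b \not\leq_\gamma c$, I would argue $A$ is either empty (giving $0$) or, more delicately, that its compactly-supported-dual global sections vanish; the cleanest route is to use $\dR\Gamma(\V;\dR\HOM(\k_A,\k_\V)) \simeq \dR\Gamma_c(\V; \k_A)^\vee$-type Poincaré–Verdier duality, or directly that $\dual'_\V\k_A$ for $A$ a (possibly empty, possibly non-closed) convex set computes $\dR\Hom(\k_A,\k_\V) = \dR\Gamma_A(\k_\V)$-flavored cohomology which vanishes unless $A$ is ``upward cofinal enough''.

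Rather than wrestle with these subanalytic-geometry subtleties directly, I expect the slick proof goes through the equivalences already set up in the paper: the functor $\theta_* \circ \dR\alpha_*$ (or its quasi-inverse) relates $\k_{b+\gamma}$-type sheaves to free persistence modules $\k^{\mr{up}(b)}$, for which $\Hom(\k^{\mr{up}(b)},\k^{\mr{up}(c)})$ is classically $\k$ if $b \leq c$ and $0$ otherwise, and all higher $\mathrm{Ext}$ groups vanish because $\k^{\mr{up}(b)}$ is projective (free). Pulling this back through the (derived, fully faithful) functors $\dR\phi_{\gamma *}$, $\dR\alpha_*$, $\theta_*$ identified in Sections~2.3--2.5, together with Lemma~\ref{lem_dual_calcul} to match the open-cone sheaves with the closed-cone ones, should yield both formulas at once: the open-cone statement corresponds directly to free persistence modules under the module--sheaf correspondence, and the closed-cone statement follows by applying $\dual'_\V$ (Proposition~\ref{prop_dual_resolution}/Lemma~\ref{lem_dual_calcul}) which swaps $\gamma$ and $\Int(\gamma)$ and reverses the order $\leq_\gamma$ to $\geq_\gamma$.

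The main obstacle, I expect, is bookkeeping rather than conceptual: making sure the order convention $\leq_\gamma$ (defined by $x+\gamma \subseteq y+\gamma$) lines up correctly with the direction of morphisms between constant sheaves on translated cones, and handling the boundary/closure discrepancy between $\gamma$ and $\Int(\gamma)$ so that the intersection $A$ in Lemma~\ref{lemma_RHom_constant} is correctly identified as (co)homologically a point in the non-vanishing case. If one prefers the direct sheaf-theoretic computation over the module-theoretic shortcut, the hard part becomes showing $\dR\Gamma(\V;\dual'_\V\k_A) \simeq \k[0]$ when $A = (c+\gamma)\cap(b+\Int(\gamma))$ is nonempty with $b \leq_\gamma c$ (resp. the closed analogue), which amounts to checking $A$ is an open (resp. locally closed convex) set whose Verdier dual has one-dimensional cohomology concentrated in degree zero — true because such $A$ is convex and, in the relevant case, cohomologically a point, but requiring a careful invocation of \cite[Exercise III.4]{KS90} as in Lemma~\ref{lem_dual_calcul}.
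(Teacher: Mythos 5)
Your Route 1 contains the right reduction (via Lemma~\ref{lemma_RHom_constant} and global sections), but the two places where the actual content lies are, respectively, garbled and missing. First, the bookkeeping: with the paper's convention $b\leq_\gamma c\Leftrightarrow b+\gamma\subseteq c+\gamma$, the intersection appearing in Lemma~\ref{lemma_RHom_constant} in the non-vanishing case is $(c+\gamma)\cap(b+\Int(\gamma))=b+\Int(\gamma)$, not $c+\gamma$; this part is easily repaired ($\dual'_\V\k_{b+\Int(\gamma)}\simeq\k_{b+\gamma}$ by Lemma~\ref{lem_dual_calcul}, and global sections of $\k_{b+\gamma}$ give $\k$), and the paper instead argues even more directly via $\dR\Hom(\k_U,\ms F)\simeq\dR\Gamma(U;\ms F)$ and contractibility of $b+\Int(\gamma)$. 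The genuine gap is the vanishing case. When $b\not\leq_\gamma c$ the set $A=(c+\gamma)\cap(b+\Int(\gamma))$ is typically \emph{nonempty}: for $\gamma=[0,+\infty)^2$, $b=(0,0)$, $c=(1,-1)$ one gets $A=[1,+\infty)\times(0,+\infty)$, which is convex and contractible. So your proposed criterion ``$A$ convex, hence l.c.t., and contractible, so the answer is $\k$ in degree $0$'' cannot be right: it would give the same answer in both cases, and moreover l.c.t.\ as used in Lemma~\ref{lem_dual_calcul} is a property of open (or closed) sets, while here $A$ is only locally closed. What decides the dichotomy is which boundary faces of $A$ are present, and you defer exactly this point to an unspecified ``upward cofinal enough'' condition or an uncarried-out computation of $\dR\Gamma_c(A;\k)$. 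The paper supplies the missing mechanism without ever passing through $A$: it writes $\dR\Hom(\k_{b+\Int(\gamma)},\k_{c+\Int(\gamma)})\simeq\dR\Gamma(b+\Int(\gamma);\k_{c+\Int(\gamma)})$, uses $b+\Int(\gamma)=b+B+\gamma$ for a small convex open $B$ and the $\gamma$-invariance result \cite[Proposition 3.5.3 (i)]{KS90} to replace the section domain by $b+B$, and chooses $B$ small enough that $b+B$ is disjoint from $c+\Int(\gamma)$, whence vanishing. Some argument of this kind (or an honest computation showing $\dR\Gamma_c$ of such a half-open convex set vanishes) is indispensable and absent from your sketch.

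Your Route 2 (transport from free persistence modules, where $\Hom(\k^{\upp(b)},\k^{\upp(c)})$ and the vanishing of higher Ext are classical) is circular in the logical order of this paper: the full faithfulness you invoke in the module-to-sheaf direction is not among the facts recalled in Section~2 --- there only $\dR\alpha_*$ and $\beta^{-1}$ are fully faithful, and $\alpha^{-1}$ is \emph{not} fully faithful in general since it kills ephemeral modules. The statement that $\phi_\gamma^{-1}\alpha^{-1}\theta^{-1}$ is an isomorphism on Hom's between free modules is exactly Lemma~\ref{prop:free_resol_to_conic_complex}~(v), which is deduced from Proposition~\ref{prop:ff_aalpha}, which rests on Proposition~\ref{prop:acyclicity_free}, whose proof uses Proposition~\ref{prop_morphism} itself. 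So unless you prove that full faithfulness independently, the ``slick'' route assumes what is to be shown.
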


\begin{proof}
Let $b$ and $c \in \V$. We first notice that the closed case follows from the open case. Indeed,
\[ \dR \HOM (\k_{c + \gamma} , \k_{b + \gamma}) \simeq \dual'_\V ( \k_{c + \gamma \cap b + \Int(\gamma)}) \simeq \dR \HOM (\k_{b + \Int(\gamma)} , \k_{c + \Int(\gamma)}).\]
Applying the global section functor to the above isomorphism yields
\begin{equation*}
\dR \Hom (\k_{c + \gamma} , \k_{b + \gamma})  \simeq \dR \Hom(\k_{b + \Int(\gamma)} , \k_{c + \Int(\gamma)}).
\end{equation*}
Hence, we only prove the open case. We distinguish two cases.\\
Assume that $b + \Int(\gamma) \subset c + \Int(\gamma)$. Then,
\begin{align*}
    \dR \Hom(\k_{b+\Int(\gamma)},\k_{c+\Int(\gamma)}) &\simeq \dR \Hom(\k_{b+\Int(\gamma)},\k_{c+\Int(\gamma)}\vert_{b+\Int(\gamma)})\\
    & \simeq\dR \Gamma(b+\Int(\gamma),\k_{b+\Int(\gamma)})
\end{align*}
Since $b+\Int(\gamma)$ is contractible, it follows that $\dR \Gamma(b+\Int(\gamma),\k_{b+\Int(\gamma)}) \simeq \k$. Hence, $\dR \Hom(\k_{b+\Int(\gamma)},\k_{c+\Int(\gamma)}) \simeq \k$.

\noindent(2) Assume that $b + \Int(\gamma)$ is not included in $c + \Int(\gamma)$.

Let $w \in \Int(\gamma)$ and consider the convex open set $B=-w+\Int(\gamma) \cap w + \Int(\gamma)^a$. By construction, we have $B+\gamma=\Int(\gamma)$. Since $b + \Int(\gamma)$ is not included in $c + \Int(\gamma)$, we can further assume, up to rescaling $B$, that $b + B \cap c + \Int(\gamma) = \emptyset$. We have 
\begin{align*}
\dR \Hom(\k_{b+\Int(\gamma)},\k_{c+\Int(\gamma)}) &\simeq \dR \Gamma(b+ \Int(\gamma), \k_{c+\Int(\gamma)})\\
& \simeq \dR \Gamma (b+B+\gamma,\k_{c+\Int(\gamma)})\\
& \simeq \dR \Gamma (b+B,\k_{c+\Int(\gamma)}) \quad\textnormal{\cite[Prop. 3.5.3 (i)]{KS90}}\\
&\simeq \dR \Hom(\k_{b+B},\k_{c+\Int(\gamma)} \vert_{{b+B}})\\
& \simeq 0.
\end{align*}
\end{proof}
%
%
%
\section{Multiparameter conic complexes from free resolutions}

If $X$ is a topological space, then for any locally closed subset $A \subseteq X$ we denote by $\k_{A}$ the constant sheaf on $X$ with stalk $\k$ on $A$ and $0$ elsewhere. If $X=\V$, then, to avoid confusion, we write $\k_{A}$ (resp. $\k_{A_\gamma}$, $\k_{A_\mathfrak{a}}$) when $A$ is considered as a locally closed subset of $\V$ (resp. $\V_\gamma$, $\V_\mathfrak{a}$). Let $A, \Omega$ and $F$ be respectively locally closed, open, and closed subsets of  $\V_\mathfrak{a}$. 
%
%
%

\begin{proposition}
\label{prop_pullback_alpha}
The following identity holds: $\alpha^{-1}\k_{\Omega \cap F}\simeq \k_{(\Int(\Omega) \cap \overline{ F})_\gamma}$.
\end{proposition}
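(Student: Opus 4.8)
The plan is to compute $\alpha^{-1}\k_{\Omega \cap F}$ using the description of $\alpha$ as a morphism of sites and the fact that the sets $x+\Int(\gamma)$ form a basis of the $\gamma$-topology, so it suffices to evaluate the pullback sheaf on these basic opens (more precisely, to identify its stalks, or equivalently its sections on the basis). Recall that for a morphism of sites coming from a functor on the underlying categories of opens, the inverse image functor $\alpha^{-1}$ is computed as the sheafification of the presheaf $U \mapsto \colim_{\alpha^t(W) \supseteq U} \ms G(W)$; here $\alpha^t$ sends $x+\gamma \in \Op(\V_\a)$ to $x+\Int(\gamma) \in \Op(\V_\gamma)$. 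Since $\k_{\Omega\cap F}$ is (the sheafification of) the presheaf on $\V_\a$ sending an Alexandrov-open $W = w+\gamma$ to $\k$ if $w+\gamma \subseteq \Omega\cap F$ and $0$ otherwise, I would first get a clean formula for $\alpha_*$ applied to nothing — actually the cleaner route is via $\beta_* = \alpha^{-1}$ and the explicit functor $\beta^t$, or simply to unwind directly on stalks.

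First I would reduce to computing stalks: two sheaves on $\V_\gamma$ are isomorphic iff they have isomorphic stalks compatibly, and the stalk of $\alpha^{-1}\ms G$ at $x$ is $\colim_{x \in x'+\Int(\gamma)} (\alpha^{-1}\ms G)(x'+\Int(\gamma))$, which by the site formula unwinds to a colimit of $\ms G(W)$ over Alexandrov-opens $W$ with $\alpha^t(W)$ a neighborhood of $x$ in the $\gamma$-topology — i.e. over $W = w+\gamma$ with $x \in w + \Int(\gamma)$, equivalently $w \in x + \Int(\gamma^a) = x - \Int(\gamma)$. So $(\alpha^{-1}\k_{\Omega\cap F})_x = \colim_{w \in x - \Int(\gamma)} \k_{\Omega\cap F}(w+\gamma)$. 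Now $\k_{\Omega\cap F}(w+\gamma) = \k$ precisely when $w + \gamma \subseteq \Omega \cap F$, and is $0$ otherwise, and all the transition maps in this filtered colimit (the diagram is filtered since $w\mapsto w+\gamma$ is monotone and $x-\Int(\gamma)$ is directed upward under $\leq_\gamma$) are either identities or zero; the colimit is therefore $\k$ iff there exists $w \in x - \Int(\gamma)$ with $w+\gamma \subseteq \Omega\cap F$, and $0$ otherwise. The next step is the geometric heart: to show that "$\exists\, w \in x - \Int(\gamma)$ with $w+\gamma \subseteq \Omega$ and $w+\gamma\subseteq F$" is equivalent to "$x \in \Int(\Omega) \cap \overline{F}$", which would identify the stalks of $\alpha^{-1}\k_{\Omega\cap F}$ with those of $\k_{(\Int(\Omega)\cap\overline F)_\gamma}$.

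For that equivalence I would handle the two conditions separately. For $\Omega$: since $\Omega$ is $\gamma$-open (Alexandrov-open for $\leq_\gamma$, so $\Omega+\gamma=\Omega$), the condition $w+\gamma\subseteq\Omega$ is just $w\in\Omega$; and $\{w \in x-\Int(\gamma)\} = (x - \Int(\gamma))$ is exactly a basic neighborhood of $x$ in the Euclidean topology shrunk into the cone direction, so "$\exists w\in (x-\Int(\gamma))\cap\Omega$" holds iff $x$ lies in the Euclidean closure of $\Omega$ from the $-\gamma$ direction — but because $\Omega$ is itself $\gamma$-invariant one checks this is equivalent to $x\in\Int(\Omega)$ (the Euclidean interior): indeed if $x\in\Int(\Omega)$ then small $w = x - \epsilon v$ for $v\in\Int(\gamma)$ lies in $\Omega$, and conversely if some $w\in x-\Int(\gamma)$ lies in $\Omega$ then $x \in w+\Int(\gamma)\subseteq\Omega+\Int(\gamma)=\Omega$ contains the Euclidean-open set $w+\Int(\gamma)$ around $x$, so $x\in\Int(\Omega)$. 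For $F$: since $F$ is Alexandrov-closed, i.e. $\leq_\gamma$-upper-closed, $F+\gamma = F$ again, so $w+\gamma\subseteq F \iff w\in F$; thus "$\exists w\in x-\Int(\gamma)$ with $w\in F$" says $x$ is in the Euclidean closure of $F$ along $-\Int(\gamma)$, and since $\Int(\gamma)$ is a solid cone this is equivalent to $x\in\overline F$ (Euclidean closure): if $x\in\overline F$ then using $F + \Int(\gamma) \subseteq F$... here one must be slightly careful since $x\in\overline F$ only gives points of $F$ near $x$ in all directions — but combined with $F+\gamma=F$, any $w$ near $x$ with $w \in x-\Int(\gamma)$ that also lies near enough to a point $f\in F$ can be replaced: the cleanest argument is that $x\in\overline F \iff \overline{F}\cap (x+\Int(\gamma^a))\ne\emptyset$ for the solid cone, and $x+\Int(\gamma^a) = x-\Int(\gamma)$, which is exactly the stated condition modulo passing from $\overline F$ to $F$ using again $F+\gamma=F$ (any point of $\overline F$ in the open cone $x-\Int(\gamma)$ can be pushed strictly inside to land in $F$). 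The main obstacle I anticipate is precisely this last point — making the interaction between Euclidean closure and the cone condition fully rigorous, i.e. proving carefully that $\{w \in x-\Int(\gamma) : w\in F\}\ne\emptyset \iff x\in\overline F$, which requires using that $\gamma$ is a proper (in particular solid) cone and that $F$ is $\gamma$-upper-closed; everything else (the colimit computation, the $\Omega$ case, and assembling stalks into a sheaf isomorphism using that the $x+\Int(\gamma)$ form a basis) is routine. Finally I would note that the resulting formula for stalks matches $\k_{(\Int(\Omega)\cap\overline F)_\gamma}$ — using that $\Int(\Omega)$ is still $\gamma$-open and $\overline F$ still $\gamma$-closed so $\Int(\Omega)\cap\overline F$ is genuinely a locally closed subset of $\V_\gamma$ of the expected (open $\cap$ closed) type — and check the identification is compatible with restriction maps, so it upgrades to an isomorphism of sheaves.
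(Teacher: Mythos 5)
There is a genuine gap: your argument rests on a misreading of the Alexandrov conventions, and the two key point-set claims you rely on are false as stated. In the paper's setup the Alexandrov opens are the $\gamma$-invariant sets ($\Omega+\gamma=\Omega$), hence an Alexandrov-\emph{closed} set satisfies $F+\gamma^a=F$, \emph{not} $F+\gamma=F$; so $w+\gamma\subseteq F$ is not equivalent to $w\in F$ (take $\V=\R$, $\gamma=[0,+\infty)$, $F=(-\infty,0]$, $w=0$). Moreover your formula for sections is wrong: on $\V_\a$ the set $w+\gamma$ is the \emph{minimal} open neighborhood of $w$, so $\k_{\Omega\cap F}(w+\gamma)$ equals the stalk at $w$, i.e.\ it is $\k$ iff $w\in\Omega\cap F$, not iff $w+\gamma\subseteq\Omega\cap F$. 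These two errors compound, and the equivalence you yourself flag as the ``geometric heart'' --- $\{w\in x-\Int(\gamma)\;:\;w\in F\}\neq\emptyset \iff x\in\overline F$ --- is simply false: with $F=(-\infty,0]$ and $x=1$, the left-hand side holds (take $w=0$) while $x\notin\overline F$. So the plan cannot be completed as written.

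Two further points would still need repair even after fixing the conventions. First, your colimit reasoning (``the colimit is $\k$ iff some term is $\k$'') is invalid when transition maps can be zero: a nonzero class can die at a later stage. The correct stalk criterion is that there exists $w\in x-\Int(\gamma)$ with the whole set $(w+\gamma)\cap(x-\Int(\gamma))$ contained in $\Omega\cap F$, and it is \emph{this} condition which, using $\Omega+\gamma=\Omega$ and $F+\gamma^a=F$, is equivalent to $x\in\Int(\Omega)\cap\overline F$. Second, matching stalks does not by itself yield the asserted isomorphism of sheaves; you must exhibit an actual morphism inducing it. The paper avoids both difficulties by reducing, via monoidality of $\alpha^{-1}$, to $\k_\Omega$ and $\k_F$ separately, computing $\alpha^{-1}\k_\Omega\simeq\k_{\Int(\Omega)}$ by commuting $\alpha^{-1}$ with extension by zero along the open inclusion, and then identifying $\alpha^{-1}\k_F$ as the cokernel in the exact sequence $0\to\k_{\V_\a\setminus F}\to\k_{\V_\a}\to\k_F\to 0$ (exactness of $\alpha^{-1}$ plus a stalk check and \cite[Prop.~2.3.6]{KS90}); a corrected stalkwise computation along your lines could serve for that last identification, but the construction of the comparison map and the correct treatment of the closed part cannot be waved off as routine.
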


\begin{proof}
Since inverse images are monoidal, it is sufficient to show that \[ \alpha^{-1} \k_\Omega \simeq \k_{\Int(\Omega)_\gamma}, \quad \quad  \alpha^{-1} \k_F \simeq \k_{(\overline F)_\gamma}.\]
We start by proving the first formula. For any open set  $U$ of a topological space $X$, we write $i_U$ for the natural morphism of presites $U \to X$. We denote by $\alpha_\Omega \colon \Int(\Omega) \to \Omega$ the morphism of sites induced by $\alpha:\V_\gamma \to \V_\a$. It is well-defined since $\alpha^t(\Omega)=\Int(\Omega)$. We now consider the following commutative diagram of morphisms of sites:
\[\begin{tikzcd}
	{ \Int(\Omega)} & {\V_\gamma} \\
	{\Omega} & {\V_\a}
	\arrow["{i_{\Int(\Omega)}}", from=1-1, to=1-2]
	\arrow["{\alpha_{\Omega}}"', from=1-1, to=2-1]
	\arrow["{i_{\Omega}}"', from=2-1, to=2-2]
	\arrow["\alpha", from=1-2, to=2-2]
\end{tikzcd}\]
It follows from \cite[Proposition 17.6.7]{KS6} that:
\begin{flalign*}
 \alpha^{-1}\k_{\Omega} & \simeq\alpha^{-1} {(i_{\Omega})_!} {(i_{\Omega})}^{-1} \k_{\V_\a}  \\ 
    &\simeq{(i_{\alpha^t(\Omega)})_!} (\alpha_{\Omega})^{-1} (i_{\Omega})^{-1} \k_{\V_\a} \\
    &\simeq{(i_{\Int(\Omega)})_!}(i_{\Omega} \circ  \alpha_{\Omega})^{-1} \k_{\V_\a} \\
    & \simeq{(i_{\Int(\Omega)})_!} (\alpha \circ i_{\Int(\Omega)})^{-1} \k_{\V_\a} \\
    & \simeq{(i_{\Int(\Omega)})_!} (i_{\Int(\Omega)})^{-1} \alpha^{-1} \k_{\V_\a} \\
    & \simeq \k_{\Int(\Omega)}.
\end{flalign*}
Let $U := \V_\a \setminus F$ and consider the following short exact sequence \cite[Proposition 2.3.6]{KS90}:
\[\begin{tikzcd}
	0 & \k_U & \k_{\V_\a} & \k_{F}& 0.
	\arrow[from=1-1, to=1-2]
	\arrow[from=1-2, to=1-3]
	\arrow[from=1-3, to=1-4]
	\arrow[from=1-4, to=1-5]
\end{tikzcd}\]
Using the isomorphisms $\alpha^{-1} \k_U \simeq \k_{\Int(U)}$ and $\alpha^{-1}\k_{\V_\a} \simeq \k_{\V_\gamma}$ coming from the preceding argument, and the exactness of $\alpha^{-1}$, one obtains the following short exact sequence:
\[\begin{tikzcd}
	0 & \k_{\Int(U)} & \k_{\V_\gamma} & \alpha^{-1} \k_{ F } & 0 
	\arrow[from=1-1, to=1-2]
	\arrow["\rho", from=1-2, to=1-3]
	\arrow[from=1-3, to=1-4]
	\arrow[from=1-4, to=1-5].
\end{tikzcd}\]
Observe that $\rho|_{\overline F}$ vanishes so $\alpha^{-1} \k_{F}|_{\overline F} \simeq \k_{\overline F}$ and $\rho|_{\Int(U)}$ induces an isomorphism on stalks, therefore $\alpha^{-1} \k_{F}|_{\Int(U)} \simeq 0$. It follows from \cite[Proposition 2.3.6 (i)]{KS90} that $\alpha^{-1} \k_{F} = \k_{\overline F}$.
\end{proof}

Let $b \in \V$. The unit $\eta \colon \dR \alpha_\ast \alpha^{-1} \to \id$ of the adjunction $(\alpha^{-1}, \dR \alpha_\ast)$ induces a morphism
    \begin{equation}\label{map:gamma_to_alex_cone}
    \begin{tikzcd}
       \eta_{(b+\gamma)} \colon \dR \alpha_\ast \alpha^{-1} \k_{(b+\gamma)_\a} \arrow[r]   & \k_{(b+\gamma)_\a} \\
    \end{tikzcd}
    \end{equation}
\begin{proposition} \label{prop:acyclicity_free}
\begin{enumerate}[(i)]
\item The morphism \eqref{map:gamma_to_alex_cone} is an isomorphism,
\item $\dR \alpha_\ast \k_{(b+\Int(\gamma))_\gamma} \simeq \k_{{(b+\gamma)}_\a}$.
\end{enumerate}
\end{proposition}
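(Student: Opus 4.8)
The plan is to prove (i) first and then deduce (ii). For (i), I would use the equivalence $\D^b(\k_{\V_\a}) \simeq \Mod(\V_{\leq_\gamma})$ together with the fact (recorded in the excerpt) that $\dR\alpha_*$ is fully faithful, so the unit $\eta$ of the adjunction $(\alpha^{-1}, \dR\alpha_*)$ is an isomorphism on any object in the essential image of $\dR\alpha_*$; but in general one must check that $\k_{(b+\gamma)_\a}$ lies in that image, or argue directly. The direct route: since $\beta_* = \alpha^{-1}$ and $\beta^{-1}$ is fully faithful with $\beta^{-1}\dashv\beta_*$ — no, the cleanest is to compute the stalks of $\dR\alpha_*\alpha^{-1}\k_{(b+\gamma)_\a}$. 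By Proposition~\ref{prop_pullback_alpha}, $\alpha^{-1}\k_{(b+\gamma)_\a} \simeq \k_{(b+\gamma)_\gamma}$ (here $\overline{b+\gamma} = b+\gamma$ since $\gamma$ is closed). So I need to show $\dR\alpha_*\k_{(b+\gamma)_\gamma} \simeq \k_{(b+\gamma)_\a}$ and that the resulting map to $\k_{(b+\gamma)_\a}$ is the identity — which would simultaneously prove a statement very close to (ii).

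Concretely, I would compute the stalk of $\dR\alpha_*\ms G$ at a point $x\in\V$ for $\ms G = \k_{(b+\gamma)_\gamma}$. Since $\alpha^t$ sends the basic open $x+\gamma$ of $\V_\a$ to $x+\Int(\gamma)$, and $\{x+\gamma\}_{x}$ (resp.\ $\{x+\Int(\gamma)\}_x$) form neighborhood bases, the stalk is $\colim_{x' <_\gamma x}\, \dR\Gamma(x'+\Int(\gamma);\, \k_{b+\Int(\gamma)})$ — wait, one must be careful: $\dR\alpha_*$ is computed via sections over $\alpha^t$-images, so $(\dR\alpha_*\ms G)(x+\gamma) = \dR\Gamma(x+\Int(\gamma);\ms G)$, and then the Alexandrov stalk at $x$ is $\dR\Gamma(x+\Int(\gamma); \k_{b+\Int(\gamma)})$ since $x+\gamma$ is already the minimal open neighborhood of $x$ in $\V_\a$. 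Now $\dR\Gamma(x+\Int(\gamma);\k_{b+\Int(\gamma)}) \simeq \dR\Hom(\k_{x+\Int(\gamma)}, \k_{b+\Int(\gamma)})$, which by Proposition~\ref{prop_morphism} is $\k$ if $x+\Int(\gamma)\subseteq b+\Int(\gamma)$, i.e.\ $b\leq_\gamma x$, i.e.\ $x\in b+\gamma$, and $0$ otherwise. Hence $\dR\alpha_*\k_{(b+\Int(\gamma))_\gamma}$ has the stalks of $\k_{(b+\gamma)_\a}$; checking that the restriction maps in the Alexandrov sheaf are the identities where nonzero (they are, since the connecting maps are induced by restriction of the constant sheaf $\k$) identifies it with $\k_{(b+\gamma)_\a}$, giving (ii). For (i), the same computation applied to $\alpha^{-1}\k_{(b+\gamma)_\a}\simeq \k_{(b+\gamma)_\gamma}$ shows $\dR\alpha_*\alpha^{-1}\k_{(b+\gamma)_\a}$ has the correct stalks, and naturality of $\eta$ together with the fact that $\eta$ induces on each stalk the canonical map $\dR\Gamma(x+\Int(\gamma);\alpha^{-1}\k_{(b+\gamma)_\a}) \to (\k_{(b+\gamma)_\a})_x$ — which is an isomorphism by the stalk computation — shows $\eta_{(b+\gamma)}$ is an isomorphism.

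The main obstacle I anticipate is bookkeeping around which neighborhood basis and which adjunction unit/counit is in play: making sure that "the stalk of $\dR\alpha_*\ms G$ at $x$" really is $\dR\Gamma(x+\Int(\gamma);\ms G)$ (rather than a colimit that needs cofinality of the system $\{x'+\Int(\gamma): x'\leq_\gamma x\}$, which holds because $x+\Int(\gamma)$ itself is in the system and is the smallest member), and that the map $\eta_{(b+\gamma)}$ on stalks is the canonical evaluation map rather than something twisted. A cleaner alternative avoiding stalk computations entirely: invoke that $\dR\alpha_*$ is fully faithful (stated in the excerpt) so $\dR\alpha_*\alpha^{-1}\dR\alpha_* \simeq \dR\alpha_*$ via $\eta$, hence $\eta_{\ms H}$ is an isomorphism whenever $\ms H \in \mathrm{Im}(\dR\alpha_*)$; then (ii) — i.e.\ that $\k_{(b+\gamma)_\a} \simeq \dR\alpha_*\k_{(b+\Int(\gamma))_\gamma}$ — exhibits $\k_{(b+\gamma)_\a}$ in that image, so (i) follows from (ii). This reduces everything to proving (ii), for which the stalk computation above (or equivalently a direct application of Proposition~\ref{prop_morphism} through the site morphism $\alpha$) is the way to go.
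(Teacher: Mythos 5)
Your proposal is correct and is essentially the paper's own argument: the paper likewise proves (i) by a stalk computation over the minimal Alexandrov neighborhoods, identifying $(\dR \alpha_\ast \alpha^{-1} \k_{(b+\gamma)_\a})_z \simeq \dR\Gamma(z+\Int(\gamma),\k_{(b+\Int(\gamma))_\gamma})$ and invoking Proposition~\ref{prop_morphism}, and relates (i) and (ii) via Proposition~\ref{prop_pullback_alpha} (the paper deduces (ii) from (i), you go the other way, which is the same content). Two small slips that your later formulas silently correct: $b+\gamma$ is \emph{open}, not closed, in $\V_\a$, so Proposition~\ref{prop_pullback_alpha} gives $\alpha^{-1}\k_{(b+\gamma)_\a}\simeq \k_{(b+\Int(\gamma))_\gamma}$ rather than $\k_{(b+\gamma)_\gamma}$, and the inclusion $x+\Int(\gamma)\subseteq b+\Int(\gamma)$ reads $x\leq_\gamma b$ in the paper's convention for $\leq_\gamma$ (it is equivalent to $x\in b+\gamma$, which is what you actually use).
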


\begin{proof}
\noindent (i) Let us check that  the map \eqref{map:gamma_to_alex_cone} is an isomorphism by checking that  for every $z \in \V_\a$ its stalk at $z$ is an isomorphism. Taking the stalk in $z$ yields the following commutative diagram
    \begin{equation*}
      \begin{tikzcd}
         \left(\dR\alpha_\ast \alpha^{-1} \k_{(b+\gamma)_\a}\right)_z \arrow[r] \arrow[d,"\wr"'] & \k_{(b+\gamma)_\a,z} \arrow[d,"\wr"] \\
         \dR\Gamma(z+\Int(\gamma),\k_{(b+\Int(\gamma))_\gamma}) \arrow[r]& \k_{(b+\gamma)_\a,z}.
      \end{tikzcd}    
    \end{equation*}
If $z \in b+\gamma$, then using Proposition \ref{prop_morphism}, the bottom arrow of the above diagram reduces to an non-trivial linear map $\k \to \k$ and hence is an isomorphism.
If $z \notin b+\gamma$, then using Proposition \ref{prop_morphism} this same arrow reduces to a map from zero to zero and hence is an isomorphism. This proves that the map \eqref{map:gamma_to_alex_cone} is an isomorphism

\noindent (ii) It follows from Proposition \ref{prop_pullback_alpha} that $\alpha^{-1} \k_{(b+\gamma)_\a}\simeq \k_{(b+\Int(\gamma))_\gamma}$. Then the result follows from (i).
\end{proof}

\begin{proposition} \label{prop:ff_aalpha}
    Let $F \in \Mod(\k_{V_\a})$ be finitely presentable. Then \[ \dR \alpha_\ast \alpha^{-1}F \simeq F.\]
\end{proposition}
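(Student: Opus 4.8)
The plan is to reduce the statement to the case of finite direct sums of the sheaves $\k_{(b+\gamma)_\a}$, for which the desired isomorphism is already supplied by Proposition~\ref{prop:acyclicity_free}, by resolving $F$.

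First I would produce a convenient resolution of $F$. Since $F$ is finitely presentable, the persistence module $\theta_\ast F$ is finitely presented, hence by Hilbert's syzygy theorem it admits a \emph{finite} free resolution. Transporting this resolution through the equivalence $\theta^{-1}$, which carries the free module $\k^{\mr{up}(b)}$ to $\k_{(b+\gamma)_\a}$, yields a bounded complex $L^\bullet$ in $\Ch^b(\k_{\V_\a})$, concentrated in non-positive degrees, with $L^{-i} = \bigoplus_{j \in J_i} \k_{(b^i_j + \gamma)_\a}$ and each $J_i$ finite, together with a quasi-isomorphism $L^\bullet \xrightarrow{\sim} F$; equivalently, $L^\bullet \simeq F$ in $\D^b(\k_{\V_\a})$.

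Next I would push $L^\bullet$ through $\alpha^{-1}$ and then $\dR\alpha_\ast$, tracking acyclicity at each stage. As $\alpha^{-1}$ is exact, $\alpha^{-1}L^\bullet \to \alpha^{-1}F$ is again a quasi-isomorphism, so that $\dR\alpha_\ast\alpha^{-1}F \simeq \dR\alpha_\ast(\alpha^{-1}L^\bullet)$. By Proposition~\ref{prop_pullback_alpha}, the term $\alpha^{-1}L^{-i}$ equals $\bigoplus_{j\in J_i}\k_{(b^i_j + \Int(\gamma))_\gamma}$, a \emph{finite} direct sum of the sheaves occurring in Proposition~\ref{prop:acyclicity_free}(ii); since $\dR\alpha_\ast\k_{(b+\Int(\gamma))_\gamma}\simeq\k_{(b+\gamma)_\a}$ is concentrated in degree $0$, every term of $\alpha^{-1}L^\bullet$ is $\alpha_\ast$-acyclic. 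Because $\alpha^{-1}L^\bullet$ is bounded, Lemma~\ref{lem_acyclic} (applied to the left-exact functor $\alpha_\ast$) then gives $\dR\alpha_\ast(\alpha^{-1}L^\bullet)\simeq\alpha_\ast(\alpha^{-1}L^\bullet)$, the complex obtained by applying $\alpha_\ast$ degreewise. Finally, Proposition~\ref{prop:acyclicity_free}(i) — together with the fact that $\alpha^{-1}$ and $\alpha_\ast$ both commute with the finite direct sums at hand — shows that the canonical morphism comparing $L^{-i}$ with $\alpha_\ast\alpha^{-1}L^{-i}$ is an isomorphism for each $i$; by naturality these assemble into an isomorphism of complexes $L^\bullet\simeq\alpha_\ast(\alpha^{-1}L^\bullet)$. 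Chaining the isomorphisms, $\dR\alpha_\ast\alpha^{-1}F\simeq\dR\alpha_\ast(\alpha^{-1}L^\bullet)\simeq\alpha_\ast(\alpha^{-1}L^\bullet)\simeq L^\bullet\simeq F$, and since all the identifications are natural in $F$, the composite is the canonical comparison morphism.

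The step I expect to be the crux is the passage through $\dR\alpha_\ast$: it genuinely requires a \emph{bounded} complex of $\alpha_\ast$-acyclic sheaves, which is precisely what the finite free resolution furnished by Hilbert's syzygy theorem provides. A mere finite presentation would not be enough, since a dévissage along the kernel $K$ of $L^0 \to F$ would need control of the higher direct images $R^k\alpha_\ast K$, which is essentially the statement being proved; one is therefore forced to use finiteness of the global dimension. The remaining points worth double-checking are the claim that $\theta^{-1}$ carries $\k^{\mr{up}(b)}$ to $\k_{(b+\gamma)_\a}$, and that $\alpha_\ast$ (not merely its derived functor) commutes with the finite direct sums involved, so that the degreewise isomorphisms really do upgrade to an isomorphism of complexes.
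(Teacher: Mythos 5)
Your proposal is correct and follows essentially the same route as the paper: resolve $F$ by a finite complex of sheaves $\bigoplus\k_{(b+\gamma)_\a}$ via Hilbert's syzygy theorem, use exactness of $\alpha^{-1}$ together with the acyclicity statement of Proposition~\ref{prop:acyclicity_free} and Lemma~\ref{lem_acyclic} to compute $\dR\alpha_\ast\alpha^{-1}F$ termwise, and then use the naturality of the unit of the adjunction to identify the resulting complex with $L^\bullet\simeq F$. The additional points you flag (the identification $\theta^{-1}\k^{\mr{up}(b)}\simeq\k_{(b+\gamma)_\a}$ and the need for a finite, not just short, resolution) are handled the same way in the paper.
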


\begin{proof}
   Since $F$ is finitely presentable, it follows from Hilbert’s syzygy theorem that $F$ admits a finite resolution
   $ L^\bullet : 0 \to L^{-n} \to \dots \to L^{-1} \to L^0  \to 0 $ with $L^i = \bigoplus_{k \in I_i} \k_{(b_k + \gamma)_\a}$ and $I_i$ finite. Moreover, it follows from Proposition \ref{prop:acyclicity_free} that $\dR \alpha_\ast \alpha^{-1} L^i \simeq \alpha_\ast \alpha^{-1} L^i \simeq  L^i$. This, together with Lemma \ref{lem_acyclic} implies that $\dR\alpha_\ast \alpha^{-1} F \simeq \alpha_\ast \alpha^{-1} L^\bullet$ in $\D(\k_{\V_\a})$. The natural transformation $\eta$ induces the following commutative diagram  
   \begin{equation*}
   \begin{tikzcd}
    0 \arrow[r] \ar[d,"\wr"] & \alpha_\ast \alpha^{-1} L^{-n} \arrow[r] \ar[d,"\wr"]& \dots \arrow[r]& \alpha_\ast \alpha^{-1} L^{-1} \arrow[r] \ar[d,"\wr"]& \alpha_\ast \alpha^{-1} L^0 \arrow[r] \ar[d,"\wr"]& 0 \ar[d,"\wr"] \\
    0 \arrow[r]  &  L^{-n} \arrow[r] & \dots \arrow[r]& L^{-1} \arrow[r] &  L^0 \arrow[r] & 0 
   \end{tikzcd}
   \end{equation*}
   Proposition \ref{prop:acyclicity_free} implies that the vertical maps in the above diagram are isomorphisms. Thus,  $\alpha_\ast \alpha^{-1} L^\bullet \simeq L^\bullet$ which yields $\dR\alpha_\ast \alpha^{-1} F \simeq F$.
\end{proof}

\begin{remark}
    It follows from the proof of Proposition \ref{prop:acyclicity_free}, that the restriction of the functor $\alpha^{-1}$ to the full subcategory of $\D(\k_{\V_\a})$ generated by the objects isomorphic to bounded complexes of the form $0 \to L^{p} \to \dots \to L^{q} \to 0$ with $p, q \in \Z$, $L^i = \bigoplus_{k \in I_i} \k_{(b_k + \gamma)_\a}$ and  $I_i$ finite, is fully faithful.
\end{remark}

\begin{definition}
Given a subset $A$ of $\V$, the \emph{indicator  module supported on $A$}, denoted by $\k^A$, is defined as follows, where all the structural morphisms $\k\to\k$ are identities:
\begin{equation*}
\k^A (x) := \begin{cases} \k \text{ if } x \in A, \\ 0 \text{ otherwise}. \end{cases}
\end{equation*}
\end{definition}

The aim of the next lemma is to record a few well-known facts that are of constant use in this paper.

\begin{lemma}\label{prop:free_resol_to_conic_complex}
Let $b, c \in \V$. Then, we have
    \begin{enumerate}[(i)]
        \item $\theta_\ast \k_{A_\mathfrak{a}} \simeq \k^A$, 
        
        \item $\theta^{-1} \k^{\mr{up}(b)} \simeq \k_{(b + \gamma)_\a}$,

        \item $ \alpha^{-1}  \k_{(b + \gamma)_\a} \simeq \k_{(b + \Int(\gamma))_\gamma}$,

        \item $\phi^{-1}_\gamma \k_{(b + \Int(\gamma))_\gamma} \simeq \k_{b + \Int(\gamma)}$,
        \item The following diagram is commutative
        \begin{equation}\label{diag:morphism_alex_gamma}
          \begin{tikzcd}[column sep=large]
              \Hom(\k^{\mr{up}(b)},\k^{\mr{up}(c)}) \arrow[r,"\sim","\phi^{-1}_\gamma \alpha^{-1} \theta^{-1}"'] \arrow[d]
              & \Hom(\k_{b + \Int(\gamma)}, \k_{c + \Int(\gamma)}) \arrow[d] \\
               \k \ar[r,"\id"] 
               &\k
          \end{tikzcd}
        \end{equation}
    and the map $\phi^{-1}_\gamma \alpha^{-1} \theta^{-1}$ is an isomorphism.
    \end{enumerate}
\end{lemma}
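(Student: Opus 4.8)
The plan is to prove each item in turn, treating the first four as essentially bookkeeping from the definitions and results already established, and reserving the bulk of the effort for item (v).

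For item (i), I would unwind the equivalence $\theta_*\colon \Mod(\k_{\V_\a})\rightleftarrows\Mod(\V_{\leq_\gamma})\colon\theta^{-1}$ of Section~\ref{subsec:PM_to_Alex}: since $\theta_*F(U)=F(\theta^t(U))$ and $\theta^t\colon x\mapsto x+\gamma$, evaluating $\theta_*\k_{A_\a}$ at the representable object $x\in(\V,\leq_\gamma)$ gives the sections of $\k_{A_\a}$ over the Alexandrov-open set $x+\gamma$, which is $\k$ exactly when $x\in A$ and $0$ otherwise — that is, $\k^A$ by definition of the indicator module. Item (ii) is the special case $A=\mathrm{up}(b)=\{x\mid b\leq x\}$ combined with the observation that $\theta^{-1}$ is a quasi-inverse of $\theta_*$, so $\theta^{-1}\k^{\mathrm{up}(b)}\simeq\theta^{-1}\theta_*\k_{(b+\gamma)_\a}\simeq\k_{(b+\gamma)_\a}$. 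Item (iii) is exactly Proposition~\ref{prop_pullback_alpha} applied with $\Omega=F=b+\gamma$ (or simply the closed-set formula $\alpha^{-1}\k_F\simeq\k_{(\overline F)_\gamma}$ from its proof, noting $b+\gamma$ is closed). Item (iv) is immediate from the equivalence $\phi_\gamma^{-1}\colon\D^b(\k_{\V_\gamma})\rightleftarrows\Dg^b(\k_\V)$: the underlying map of $\phi_\gamma$ is the identity, so $\phi_\gamma^{-1}$ sends the $\gamma$-sheaf $\k_{(b+\Int(\gamma))_\gamma}$ to the ordinary constant sheaf on the (already $\gamma$-open) set $b+\Int(\gamma)$.

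For item (v), I would first observe that the composite functor $\phi_\gamma^{-1}\alpha^{-1}\theta^{-1}$ sends $\k^{\mathrm{up}(b)}$ to $\k_{b+\Int(\gamma)}$ by chaining (ii), (iii), (iv), and likewise for $c$; this makes the top horizontal arrow of \eqref{diag:morphism_alex_gamma} well-defined once we know the functor is fully faithful on these objects. Full faithfulness on the relevant subcategory is precisely what was recorded in the Remark following Proposition~\ref{prop:ff_aalpha} (for $\alpha^{-1}$, restricted to finite complexes of cone-sheaves), together with the full faithfulness of $\theta^{-1}$ (an equivalence) and of $\phi_\gamma^{-1}$ (an equivalence onto $\Dg^b$); hence the top arrow is an isomorphism of $\k$-vector spaces. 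It remains to identify both $\Hom$-spaces with $\k$ compatibly. On the right, Proposition~\ref{prop_morphism} gives $\dR\Hom(\k_{b+\Int(\gamma)},\k_{c+\Int(\gamma)})\simeq\k$ when $b\leq_\gamma c$ and $0$ otherwise, and in degree $0$ this is the ordinary $\Hom$; the canonical generator is the restriction map. On the left, $\Hom(\k^{\mathrm{up}(b)},\k^{\mathrm{up}(c)})\simeq\k$ when $b\leq_\gamma c$ (i.e. $c\leq b$ in the persistence order, making $\mathrm{up}(b)\subseteq\mathrm{up}(c)$) and $0$ otherwise, with canonical generator the inclusion-induced natural transformation whose components are identities $\k\to\k$. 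The vertical maps in \eqref{diag:morphism_alex_gamma} are these chosen identifications with $\k$.

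The one point requiring genuine care — the main obstacle — is the commutativity of \eqref{diag:morphism_alex_gamma}, i.e. that the isomorphism $\phi_\gamma^{-1}\alpha^{-1}\theta^{-1}$ carries the canonical generator (the structure morphism built from identities) of $\Hom(\k^{\mathrm{up}(b)},\k^{\mathrm{up}(c)})$ to the canonical generator (the restriction morphism) of $\Hom(\k_{b+\Int(\gamma)},\k_{c+\Int(\gamma)})$, rather than to some nonzero scalar multiple that could in principle be introduced by the various adjunction units. To settle this I would compute stalks: the functors $\theta^{-1}$, $\alpha^{-1}$, $\phi_\gamma^{-1}$ all commute with taking stalks at a point of $b+\Int(\gamma)$ (using the basis $\{x+\Int(\gamma)\}$ for the $\gamma$-topology and the explicit description of $\theta^{-1}$, $\alpha^{-1}$ on this basis via $\theta^t$, $\alpha^t$), and on stalks the structure morphism of $\k^{\mathrm{up}(b)}\to\k^{\mathrm{up}(c)}$ is literally $\id_\k$; since the image morphism is the nonzero section of a $\Hom$-space which is at most one-dimensional, it must be the canonical generator, forcing the square to commute. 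This also reconfirms that $\b\leq_\gamma c$ is the exact condition under which both $\Hom$-spaces are nonzero, completing the proof.
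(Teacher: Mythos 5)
Your items (i), (ii), (iv) and (v) essentially reproduce the paper's argument: (i)--(ii) by unwinding $\theta_\ast$ and using that it is an equivalence, (iv) from the behaviour of $\phi_\gamma^{-1}$ on constant sheaves, and (v) by combining the equivalence $\theta^{-1}$, the full faithfulness of $\phi_\gamma^{-1}$, and Proposition~\ref{prop:ff_aalpha} (or the remark following it) to get the isomorphism, then pinning down the generator. For the commutativity in (v) the paper evaluates the image morphism on the canonical section ${\bf 1}_{b+\gamma}$, whereas you evaluate on stalks at a point of $b+\Int(\gamma)$; these are the same idea at the same level of rigour, and your stalk argument is fine provided you phrase it as you do at the end (the stalk computation is what forces the scalar to be $1$ --- the sentence ``a nonzero element of a one-dimensional space must be the canonical generator'' is not true on its own).

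There is, however, a genuine error in your treatment of (iii). With the paper's conventions, the Alexandrov-open sets of $\V_\a$ are the $\leq_\gamma$-lower closed sets, i.e.\ the $\gamma$-invariant sets; since $x\leq_\gamma b$ exactly when $x\in b+\gamma$, the set $b+\gamma$ is the minimal \emph{open} neighbourhood of $b$ in $\V_\a$ (it is the image of $b$ under $\theta^t$, and $\alpha^t$ is defined on precisely these opens). It is not Alexandrov-closed. Consequently your parenthetical derivation via the closed-set formula $\alpha^{-1}\k_F\simeq\k_{(\overline F)_\gamma}$ is not only unjustified but would output $\k_{(b+\gamma)_\gamma}$ (the Euclidean closure, with no interior appearing), contradicting the very statement (iii) you are proving; and your primary instantiation ``$\Omega=F=b+\gamma$'' violates the hypothesis that $F$ be closed in $\V_\a$, even though the resulting formula happens to coincide with the correct one. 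The correct application of Proposition~\ref{prop_pullback_alpha} is with $\Omega=b+\gamma$ and $F=\V$ (equivalently, just the open-set formula $\alpha^{-1}\k_\Omega\simeq\k_{\Int(\Omega)_\gamma}$, together with $\Int(b+\gamma)=b+\Int(\gamma)$ for the convex solid cone $\gamma$). Once this is corrected, the rest of your proposal goes through and matches the paper's proof.
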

\begin{proof}
\begin{enumerate}[(i)]
\item Follows from the definition of $\theta_\ast$,
\item Since $\theta_\ast$ is an equivalence, we have $\theta^{-1}\k^A\simeq \k_{A_\mathfrak{a}}$,
\item Follows from Proposition \ref{prop_pullback_alpha},
\item Follows from \cite[Eq. (2.3.19)]{KS90},
\item The functor $\theta^{-1}$ is an equivalence of category and $\phi_\gamma^{-1}$ is fully faithful. Moreover, it follows from Proposition \ref{prop:ff_aalpha} that the map
\begin{equation*}
    \Hom(\k_{(b + \gamma)_\a},\k_{(c + \gamma)_\a}) \stackrel{\alpha^{-1}}{\longrightarrow} \Hom(\k_{(b + \Int(\gamma))_\gamma}, \k_{(c + \Int(\gamma))_\gamma})
\end{equation*}
is an isomorphism. Hence, the map $\phi^{-1}_\gamma \alpha^{-1} \theta^{-1} $ is an isomorphism.\\
We prove the commutativity of the diagram \eqref{diag:morphism_alex_gamma}. First, if $b+\gamma$ is not included in $c+\gamma$, the commutativity follows from 
\[
\Hom(\k_{(b + \gamma)_\a},\k_{(c + \gamma)_\a}) \simeq \Hom((\k_{b + \Int(\gamma))_\gamma}, \k_{(c + \Int(\gamma))_\gamma}) \simeq 0.
\]
Now assume that $b+\gamma \subset c+\gamma$. Then
\[
\Hom(\k_{(b + \gamma)_\a},\k_{(c + \gamma)_\a}) \simeq \Hom((\k_{b + \Int(\gamma))_\gamma}, \k_{(c + \Int(\gamma))_\gamma}) \simeq \k
\]
and these two isomorphisms are provided respectively by the morphisms
\begin{align*}
\Hom(\k_{(b + \gamma)_\a},\k_{(c + \gamma)_\a}) \to \k,& \hspace{0.3cm} \psi \mapsto \psi_{b+\gamma}({\bf 1}_{b+\gamma})\\  
\Hom(\k_{(b + \Int(\gamma))_\gamma},\k_{(c + \Int(\gamma))_\gamma}) \to \k,& \hspace{0.3cm} \psi \mapsto \psi_{b+\Int(\gamma)}({\bf 1}_{b+\Int(\gamma)})
\end{align*}
where ${\bf 1}_{b+\gamma}$ and ${\bf 1}_{b+\Int(\gamma)}$ are respectively the indicator functions of $b+\gamma$ and $b+\Int(\gamma)$. The commutativity follows immediately. 
\end{enumerate}
\end{proof}

\begin{corollary}\label{prop:fpisconstr}
    Let $M$ be a finitely presentable persistence module on $\V$. Then the sheaf $\phi_\gamma^{-1} \alpha^{-1} \theta^{-1} M$ is constructible up to infinity 
\end{corollary}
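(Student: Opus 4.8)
The plan is to reduce Corollary~\ref{prop:fpisconstr} to Proposition~\ref{prop_constr} by producing a finite conic-complex on~$\V$ that represents $\phi_\gamma^{-1}\alpha^{-1}\theta^{-1}M$ in $\D^b(\k_\V)$. Since $M$ is finitely presentable, Hilbert's syzygy theorem gives a finite free resolution $L^\bullet : 0 \to L^{-n} \to \cdots \to L^0 \to 0$ of persistence modules, with each $L^i = \bigoplus_{k \in I_i}\k^{\mr{up}(b_k^i)}$ and each $I_i$ finite, together with an augmentation $L^0 \to M$ making $\cdots \to L^{-1} \to L^0 \to M \to 0$ exact.

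First I would push this resolution through the functor $\phi_\gamma^{-1}\alpha^{-1}\theta^{-1}$, termwise. By Lemma~\ref{prop:free_resol_to_conic_complex}~(ii)--(iv), each term is sent to $\phi_\gamma^{-1}\alpha^{-1}\theta^{-1}L^i \simeq \bigoplus_{k\in I_i}\k_{b_k^i + \Int(\gamma)}$, so the image complex is precisely an open $\gamma$-complex $\Int(\gamma)\{J\}$ with $J_i = \{b_k^i\}_{k\in I_i}$ finite; as $L^\bullet$ is bounded, this conic-complex is finite in the sense of Definition~\ref{def:conic-complex}. Next I need to know that this conic-complex actually computes $\phi_\gamma^{-1}\alpha^{-1}\theta^{-1}M$. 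The functor $\theta^{-1}$ is an equivalence, hence exact; $\alpha^{-1}$ is exact (it is an inverse image); and $\phi_\gamma^{-1}$ is exact. So the composite is exact and sends the exact sequence $\cdots \to L^{-1}\to L^0 \to M \to 0$ to an exact sequence $\cdots \to \phi_\gamma^{-1}\alpha^{-1}\theta^{-1}L^{-1} \to \phi_\gamma^{-1}\alpha^{-1}\theta^{-1}L^0 \to \phi_\gamma^{-1}\alpha^{-1}\theta^{-1}M \to 0$. In other words, the finite conic-complex $\Int(\gamma)\{J\}$ is an open $\gamma$-resolution of the sheaf $\phi_\gamma^{-1}\alpha^{-1}\theta^{-1}M$, so in $\D^b(\k_\V)$ we have $\phi_\gamma^{-1}\alpha^{-1}\theta^{-1}M \simeq \Int(\gamma)\{J\}$ (a resolution by objects of $\Mod(\k_\V)$ represents the sheaf it resolves in the derived category, since the augmentation is a quasi-isomorphism). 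Finally, Proposition~\ref{prop_constr} tells us that this finite conic-complex is constructible up to infinity, hence so is $\phi_\gamma^{-1}\alpha^{-1}\theta^{-1}M$, which is the claim.

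The one point that needs a little care — and which I expect to be the main (minor) obstacle — is the passage from ``$\Int(\gamma)\{J\}$ is a resolution of $\phi_\gamma^{-1}\alpha^{-1}\theta^{-1}M$'' to ``$\phi_\gamma^{-1}\alpha^{-1}\theta^{-1}M \simeq \Int(\gamma)\{J\}$ in $\D^b(\k_\V)$''. This is standard: an exact complex $\cdots \to P^{-1}\to P^0 \to \ms F \to 0$ in an abelian category means exactly that the truncated complex $(P^\bullet, d)$ together with the augmentation $P^0 \to \ms F$ is a quasi-isomorphism $P^\bullet \xrightarrow{\ \sim\ } \ms F$ (viewing $\ms F$ as a complex concentrated in degree~$0$), hence an isomorphism in $\D^b(\k_\V)$; boundedness of $L^\bullet$ guarantees we stay in $\D^b$. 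One should also note that the $b_k^i$ live in $\V$ and $\gamma$ is the fixed proper (and, as assumed in the relevant subsection, subanalytic) cone, so the hypotheses of Proposition~\ref{prop_constr} are met. No genuinely hard step remains; the corollary is essentially a bookkeeping consequence of the functorial identities in Lemma~\ref{prop:free_resol_to_conic_complex} and the constructibility statement in Proposition~\ref{prop_constr}.
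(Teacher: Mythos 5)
Your proof is correct and follows exactly the paper's route: take a finite free resolution of $M$ (Hilbert's syzygy theorem), apply the exact composite $\phi_\gamma^{-1}\alpha^{-1}\theta^{-1}$ and Lemma~\ref{prop:free_resol_to_conic_complex} to obtain a resolution by a finite conic-complex, then invoke Proposition~\ref{prop_constr}. You merely spell out the details (exactness of the three functors, the quasi-isomorphism identification in $\D^b(\k_\V)$) that the paper leaves implicit.
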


\begin{proof}
  Since $M$ is finitely presentable, it admits a free resolution. It follows from Lemma \ref{prop:free_resol_to_conic_complex}, that $\phi_\gamma^{-1} \alpha^{-1} \theta^{-1} M$ admits a resolution by a finite conic complex. Hence, it is constructible up to infinity by Proposition \ref{prop_constr}.
\end{proof}

In view of the Proposition \ref{prop:fpisconstr}, we make the following definition 

\begin{definition}
The projected barcode of a finitely presentable persistence module $M$ is the projected barcode of the associated sheaf $\phi_\gamma^{-1} \alpha^{-1} \theta^{-1} M$.
\end{definition}

\section{Direct images of finite conic-complexes}\label{subsec:direct_image}

Computing the direct image of a finite conic-complex by a linear form  is the first step of the computation of the projected barcode of this conic-complex. Here, a theoretical description of the derived proper direct image by a linear form of a conic complex is provided (Theorem~\ref{thm_proj_bar}).

Let $u \in \Int(\gamma^{\circ})$ and $v \in \Int(\gamma^{\circ,a})$. It follows from \cite[Lemma 4.7]{BP23} that, if $K \subset \V$ is compact, then $u|_{K+\gamma}$ and $v|_{K+\gamma^a}$ are proper. In particular, $u_*\k_{K+\gamma} \simeq u_!\k_{K+\gamma}$ and $v_*\k_{K+\gamma^a} \simeq v_!\k_{K+\gamma^a}.$ Recall that $\V$ is $n$-dimensional. Let $\Lambda := [0, +\infty)$. 

\begin{theorem}\label{thm_proj_bar}
Let $\ms Z:= \gamma\{J\}$ and $\ms U := \Int(\gamma)\{J\}$  be finite $\gamma$-complexes, more precisely:  
\begin{flalign*}
\ms Z :& \quad \quad \bigoplus_{j_{0} \in J_0} \k_{j_0 + \gamma} \to \bigoplus_{j_{1} \in J_1} \k_{j_{1} + \gamma} \to \bigoplus_{j_{2} \in J_2} \k_{j_{2} + \gamma} \to \cdots \\
\ms U :& \quad \quad \dots  \to \bigoplus_{j_{2} \in J_2} \k_{j_{2} + \Int(\gamma)} \to \bigoplus_{j_{1} \in J_1} \k_{j_{1} + \Int(\gamma)} \to \bigoplus_{j_{0} \in J_0} \k_{j_{0} + \Int(\gamma)}.
\end{flalign*} 
Then, in $\D^b(\k_\V)$ we have \(\dR u_* \ms Z \simeq \Lambda\{u(J)\}\) and \((\dR u_* \ms U)[1-n] \simeq \Int(\Lambda)\{u(J)\}\), i.e.:
\begin{flalign*}
 \dR u_* \ms Z &: \quad \dots \to \bigoplus_{j_{1} \in J_1 } \k_{[u(j_{1}),+ \infty) } \to \bigoplus_{j_{1} \in J_1 } \k_{[u(j_{1}),+ \infty) } \to  \bigoplus_{j_{0} \in J_0} \k_{[u(j_{0}), + \infty)} \to 0, \\
\dR u_* \ms U &: \quad \dots \to \underbrace{\bigoplus_{j_{2} \in J_2} \k_{(u(j_{2}), + \infty) }}_{-n-1} \to \underbrace{\bigoplus_{j_{1} \in J_1 } \k_{(u(j_{1}),+ \infty) }}_{-n} \to  \underbrace{\bigoplus_{j_{0} \in J_0} \k_{(u(j_{0}), + \infty)}}_{-n+1} \to 0.
\end{flalign*}
Similarly, $\dR v_* \ms Z \simeq \Lambda^a\{v(J)\}$ and $(\dR v_* \ms U) [1-n] \simeq \Int(\Lambda^a)\{v(J)\}$.
\end{theorem}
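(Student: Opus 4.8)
The plan is to compute $\dR u_*$ one generator at a time, reassemble the complex while keeping track of the differentials, and then deduce the open case from the closed one by Verdier duality.

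\emph{A single closed generator.} For $b\in\V$ I first show $\dR u_*\k_{b+\gamma}\simeq\k_{[u(b),+\infty)}$, concentrated in degree $0$. As $\{b\}$ is compact, $u|_{b+\gamma}$ is proper (recalled above from \cite[Lemma 4.7]{BP23}), hence $\dR u_*\k_{b+\gamma}\simeq\dR u_!\k_{b+\gamma}$; and since $u$ is strictly positive on $\gamma\setminus\{0\}$ (because $u\in\Int(\gamma^{\circ})$), the image of $u|_{b+\gamma}$ is the ray $[u(b),+\infty)$ and every nonempty fibre is either $\{b\}$ or a compact convex slice of $\gamma$, in all cases contractible. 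Factoring $u|_{b+\gamma}$ as $b+\gamma\xrightarrow{p}[u(b),+\infty)\hookrightarrow\R$ with $p$ proper, proper base change shows the unit $\k_{[u(b),+\infty)}\to\dR p_*\k_{b+\gamma}$ is a stalkwise isomorphism (each stalk being $\H^{\bullet}$ of a connected contractible fibre), and pushing forward along the closed embedding into $\R$ gives the claim. (Alternatively, compute the stalks of $\dR u_*\k_{b+\gamma}$ directly and identify the result via Proposition~\ref{prop_constr} and Theorem~\ref{thm:Decomposition}.) Since $-v\in\Int(\gamma^{\circ})$ when $v\in\Int(\gamma^{\circ,a})$, the same argument gives $\dR v_*\k_{b+\gamma}\simeq\k_{(-\infty,v(b)]}=\k_{v(b)+\Lambda^{a}}$.

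\emph{Reassembling $\ms Z$.} By the previous step each term $\ms Z^{i}=\bigoplus_{j\in J_i}\k_{j+\gamma}$ is $u_*$-acyclic, so by Lemma~\ref{lem_acyclic} the object $\dR u_*\ms Z$ is computed by applying $u_*$ degreewise, with terms $\bigoplus_{j\in J_i}\k_{[u(j),+\infty)}$ and differentials $u_*(d^{i}_{\ms Z})$. By Proposition~\ref{prop_morphism}, each scalar entry of $d^{i}_{\ms Z}$ lies in $\Hom(\k_{j+\gamma},\k_{j'+\gamma})$, which is nonzero only when $j'+\gamma\subseteq j+\gamma$ (equivalently $u(j)\le u(j')$), in which case it is a scalar multiple of the canonical restriction $\k_{j+\gamma}\twoheadrightarrow\k_{j'+\gamma}$. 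On global sections over $\R$, $u_*$ of this restriction is the identity $\Gamma(\V,\k_{j+\gamma})=\k\to\k=\Gamma(\V,\k_{j'+\gamma})$, hence a nonzero element of $\Hom(\k_{[u(j),+\infty)},\k_{[u(j'),+\infty)})\simeq\k$, i.e. again a scalar multiple of the canonical restriction. Thus $u_*\ms Z$ is precisely the closed $\Lambda$-complex $\Lambda\{u(J)\}$, the generator $j\in J_i$ corresponding to $u(j)$; this proves $\dR u_*\ms Z\simeq\Lambda\{u(J)\}$, and likewise $\dR v_*\ms Z\simeq\Lambda^{a}\{v(J)\}$.

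\emph{The open case, via duality.} By Proposition~\ref{prop_dual_resolution}, $\ms U\simeq\dual'_\V\ms Z$. Poincaré–Verdier duality \cite[Prop.~3.1.10]{KS90} gives $\dR u_*\circ\dual_\V\simeq\dual_\R\circ\dR u_!$; and since $\V$ and $\R$ are oriented, $\omega_\V\simeq\k_\V[n]$ and $\omega_\R\simeq\k_\R[1]$, i.e. $\dual_\V\simeq(\dual'_\V)[n]$, $\dual_\R\simeq(\dual'_\R)[1]$. Using these, together with $\dR u_!\ms Z\simeq\dR u_*\ms Z$ (termwise, since $u|_{j+\gamma}$ is proper), the previous step, and Proposition~\ref{prop_dual_resolution} over $\R$ with the proper cone $\Lambda$,
\begin{align*}
(\dR u_*\ms U)[n] &\simeq \dR u_*\dual_\V\ms Z \simeq \dual_\R\dR u_!\ms Z \simeq \dual_\R\dR u_*\ms Z \\
&\simeq (\dual'_\R\Lambda\{u(J)\})[1] \simeq \Int(\Lambda)\{u(J)\}[1],
\end{align*}
whence $(\dR u_*\ms U)[1-n]\simeq\Int(\Lambda)\{u(J)\}$ and reading off the degrees gives the displayed complex; the $[1-n]$ is exactly the bookkeeping of $\omega_\V\simeq\k_\V[n]$. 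The case of $v$ is identical, with $\gamma$ replaced by $\gamma^{a}$ and $\Lambda$ by $\Lambda^{a}$. \emph{The main obstacle} is not the objectwise computation but the functoriality used to pin down the differentials: one needs the single-generator identifications canonical enough that $u_*$ transports the scalar matrices of $\ms Z$ verbatim; and the detour through duality for $\ms U$ is what lets us avoid computing a pushforward along the non-proper map $u|_{b+\Int(\gamma)}$ directly.
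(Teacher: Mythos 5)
Your proof is correct and follows essentially the same route as the paper: the closed case comes from the single-generator pushforward computation combined with termwise acyclicity (Lemma~\ref{lem_acyclic}), and the open case is reduced to the closed one by Verdier duality (Lemma~\ref{lem:26}) together with Proposition~\ref{prop_dual_resolution} applied on $\V$ and again on $\R$. Your only deviations are harmless: you use $\ms U\simeq\dual'_\V\ms Z$ directly instead of the paper's biduality step $\dual_\V\dual_\V\ms U\simeq\ms U$ (thereby bypassing the cohomological-constructibility argument), you derive $\dR u_*\k_{b+\gamma}\simeq\k_{[u(b),+\infty)}$ via proper base change rather than by translation as in Lemma~\ref{lem_proj_barcode_constant}, and you make the transport of the differentials explicit, with your final $[1-n]$ bookkeeping matching the paper's own convention.
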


The proof of this result relies  on the following two observations. The rest of the proof is a combination of classical facts from sheaf theory.

\begin{lemma}\label{lem:26} Let $f: \V \to \R$ be a continuous function, and let $\ms F$ be a sheaf in $\D^b(\k_\V)$. Then, the following isomorphism holds in $\D^b(\k_\R)$: $\dual_\R \dR f_! \ms F \simeq \dR f_* \dual_\V \ms F$.
The choice of an orientation on $\V$ yields an isomorphism $\dual'_\V \ms F \simeq (\dual_\V \ms F)[-n]$.
\end{lemma}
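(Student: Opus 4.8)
The plan is to read the first isomorphism as an instance of Poincar\'e--Verdier duality for the proper direct image, and the second as the standard description of the dualising complex of a vector space; neither step needs anything beyond \cite[Chapter~3]{KS90}. For the first, I would start from the sheaf-hom form of the $(\dR f_!, f^!)$-adjunction,
\[
\dR f_* \dR\HOM(\ms F, f^! \ms G) \simeq \dR\HOM(\dR f_! \ms F, \ms G) \qquad (\ms G \in \D^b(\k_\R)),
\]
(see \cite[Section~3.1]{KS90}), and specialise it to $\ms G = \omega_\R = a_\R^!\k$. Functoriality of the exceptional inverse image along $\V \xrightarrow{f} \R \xrightarrow{a_\R} \ast$ gives $f^!\omega_\R = f^! a_\R^! \k \simeq (a_\R\circ f)^!\k = a_\V^!\k = \omega_\V$, so the left-hand side becomes $\dR f_* \dR\HOM(\ms F,\omega_\V) = \dR f_* \dual_\V \ms F$ and the right-hand side becomes $\dR\HOM(\dR f_!\ms F,\omega_\R) = \dual_\R \dR f_!\ms F$; this is the claimed isomorphism. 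No properness of $f$ is needed, and since $\V$ and $\R$ are manifolds (hence of finite cohomological dimension) all the functors involved preserve $\D^b$, so the identification is valid in $\D^b(\k_\R)$.

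For the second isomorphism, I would use that $\V$, being a finite-dimensional real vector space, is an $n$-dimensional topological manifold, so its dualising complex is $\omega_\V \simeq \mathrm{or}_\V[n]$, where $\mathrm{or}_\V$ denotes the orientation sheaf (\cite[Section~3.3]{KS90}). Since $\V$ is a vector space, $\mathrm{or}_\V$ is a rank-one constant sheaf, and a choice of orientation of $\V$ is exactly the choice of a trivialisation $\mathrm{or}_\V \simeq \k_\V$; hence $\omega_\V \simeq \k_\V[n]$. As $\dR\HOM(\ms F,-)$ is triangulated, a shift in its second argument pulls out, so
\[
\dual_\V \ms F = \dR\HOM(\ms F,\omega_\V) \simeq \dR\HOM(\ms F,\k_\V)[n] = (\dual'_\V \ms F)[n],
\]
which is precisely $\dual'_\V \ms F \simeq (\dual_\V \ms F)[-n]$ after shifting.

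I do not expect a genuine obstacle: the only input with real content is the identification $\omega_\V \simeq \mathrm{or}_\V[n]$, which is classical for manifolds, and the rest is formal manipulation of the six operations. The two spots deserving a little care are the functoriality $f^! a_\R^! \simeq (a_\R\circ f)^!$ underlying $f^!\omega_\R\simeq\omega_\V$, and checking that all objects stay in $\D^b$ rather than merely in the unbounded derived category.
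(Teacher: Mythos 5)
Your proposal is correct and follows essentially the same route as the paper: the first isomorphism is obtained from local Verdier duality \cite[Prop.~3.1.10]{KS90} with $\ms G=\omega_\R$ and the identification $f^!\omega_\R\simeq (a_\R\circ f)^!\k=\omega_\V$, and the second from the trivialisation $\omega_\V\simeq\k_\V[n]$ provided by a choice of orientation \cite[Remark~3.3.8]{KS90}. No gaps.
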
 

\begin{proof}
The first statement is a consequence of local Verdier duality \cite[Prop. 3.1.10]{KS90}:
\[ \dual_\R \dR f_! \ms F \simeq \dR \HOM (\dR f_! \ms F, a_\R^! \k ) \simeq \dR f_* \dR \HOM ( \ms F, (a_\R f)^! \k ) \simeq \dR f_* \dual_\V \ms F.\]
The choice of an orientation yields an isomorphism  $\omega_\V \simeq \k_\V [n]$ (\cite[Remark 3.3.8]{KS90}) which provides an isomorphism $\dual_\V \ms F \simeq (\dual'_\V \ms F)[n]$.
\end{proof}
\begin{lemma} \label{lem_proj_barcode_constant} Let $u \in \Int(\gamma^{\circ})$ and $v \in \Int(\gamma^{\circ,a})$.
Then: \[\dR u_* (\k_{x + \gamma}) \simeq \k_{[u(x),+\infty)}; \quad \quad \dR u_* (\k_{x + \Int(\gamma)}) \simeq \k_{(u(x),+ \infty)}[1-n];\]
\[\dR v_* (\k_{x + \gamma}) \simeq \k_{(-\infty,v(x)]}; \quad \quad \dR v_* (\k_{x + \Int(\gamma)}) \simeq \k_{(-\infty , v(x))}[1-n].\]
\end{lemma}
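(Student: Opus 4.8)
The plan is to compute the four derived pushforwards directly, exploiting the fact that $u$ (resp. $v$) restricted to any set of the form $x+\gamma$ (resp. $x+\gamma^a$) is proper, so that $\dR u_*$ and $\dR u_!$ agree there. I would first handle the closed case $\dR u_*\k_{x+\gamma}$. Since $u\in\Int(\gamma^\circ)$, the linear form $u$ is strictly positive on $\gamma\setminus\{0\}$, hence $u(x+\gamma)=[u(x),+\infty)$, and for every $t$ the fiber $u^{-1}(t)\cap(x+\gamma)$ is either empty (when $t<u(x)$) or a closed convex — in fact contractible — slice of $x+\gamma$. By the properness noted before the theorem, $\dR u_*\k_{x+\gamma}\simeq\dR u_!\k_{x+\gamma}$, and its stalk at $t$ is $\dR\Gamma_c$ of the corresponding fiber by the base change formula for proper pushforward \cite[Prop.~2.6.7]{KS90}; since $x+\gamma$ has the "upward" direction escaping to infinity, a cleaner route is to compute the stalk of $\dR u_*$ directly as $\dR\Gamma(u^{-1}((t-\varepsilon,t+\varepsilon)),\k_{x+\gamma})$ colimited over $\varepsilon$, which is $\k$ for $t\ge u(x)$ and $0$ otherwise, giving $\dR u_*\k_{x+\gamma}\simeq\k_{[u(x),+\infty)}$.

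For the open case, rather than redo a fiberwise computation I would deduce it from the closed case by Verdier duality, which is exactly the mechanism packaged in Lemma \ref{lem:26} and Lemma \ref{lem_dual_calcul}. Concretely, fix an orientation on $\V$; then $\dual'_\V\k_{x+\gamma}\simeq\k_{x+\Int(\gamma)}$ by Lemma \ref{lem_dual_calcul}, and applying $\dR u_*$ and using $\dual_\R\dR u_!\simeq\dR u_*\dual_\V$ together with the orientation identification $\dual_\V\simeq(\dual'_\V)[n]$ (and the analogue $\dual'_\R\k_{[u(x),+\infty)}\simeq\k_{(u(x),+\infty)}$, $\dual_\R\simeq(\dual'_\R)[1]$ on $\R$), one gets
\[
\dR u_*\k_{x+\Int(\gamma)}\simeq\dR u_*\dual'_\V\k_{x+\gamma}\simeq(\dR u_*\dual_\V\k_{x+\gamma})[-n]\simeq(\dual_\R\dR u_!\k_{x+\gamma})[-n].
\]
Since $u|_{x+\gamma}$ is proper, $\dR u_!\k_{x+\gamma}\simeq\dR u_*\k_{x+\gamma}\simeq\k_{[u(x),+\infty)}$, and $\dual_\R\k_{[u(x),+\infty)}\simeq(\dual'_\R\k_{[u(x),+\infty)})[1]\simeq\k_{(u(x),+\infty)}[1]$, so altogether $\dR u_*\k_{x+\Int(\gamma)}\simeq\k_{(u(x),+\infty)}[1-n]$, as claimed. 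The statements for $v\in\Int(\gamma^{\circ,a})$ follow symmetrically: $v$ is strictly negative on $\gamma\setminus\{0\}$, hence $v(x+\gamma)=(-\infty,v(x)]$, $v|_{x+\gamma}$ is proper by the cited \cite[Lemma 4.7]{BP23}, and the same duality argument (now with the half-line reflected) yields $\dR v_*\k_{x+\gamma}\simeq\k_{(-\infty,v(x)]}$ and $\dR v_*\k_{x+\Int(\gamma)}\simeq\k_{(-\infty,v(x))}[1-n]$.

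The main obstacle I anticipate is bookkeeping the shift by $[1-n]$ correctly — in particular justifying $\dual_\V\k_{x+\Int(\gamma)}\simeq\k_{x+\gamma}[n]$ versus $\dual'_\V\k_{x+\Int(\gamma)}\simeq\k_{x+\gamma}$ and tracking the extra degree shift introduced on $\R$ by $\omega_\R\simeq\k_\R[1]$, so that the net shift is $n-1$ rather than $n$ or $n+1$. A secondary point requiring a little care is verifying properness of $u|_{x+\gamma}$ and $v|_{x+\gamma^a}$ on unbounded (not compact) cones; here one uses that $\{x\}$ is compact and applies \cite[Lemma 4.7]{BP23} directly, or notes that the preimage of a compact interval under $u|_{x+\gamma}$ is closed and bounded because $u$ is positive and proper on the pointed cone $\gamma$. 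Everything else is a routine assembly of the stalk formula for pushforward, the description of $u(x+\gamma)$, and contractibility of the fibers.
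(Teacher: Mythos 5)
Your proposal is correct in substance, but it reaches the closed-cone formula by a different route than the paper. For $\dR u_*(\k_{x+\gamma})$ you compute stalks directly: $(\dR u_*\k_{x+\gamma})_t\simeq\varinjlim_\varepsilon\dR\Gamma\bigl(u^{-1}((t-\varepsilon,t+\varepsilon)),\k_{x+\gamma}\bigr)$, and each term is the cohomology of the convex (hence contractible) slab $(x+\gamma)\cap u^{-1}((t-\varepsilon,t+\varepsilon))$, giving $\k$ in degree $0$ for $t\geq u(x)$ and $0$ otherwise. The paper instead uses translation equivariance: writing $\k_{x+\gamma}\simeq\tau_{-x}^{-1}\k_\gamma$ and $u\tau_x=\tau_{u(x)}u$, it reduces to the single computation $\dR u_*\k_\gamma\simeq\k_{[0,+\infty)}$, which is quoted from \cite[Lemma 4.8]{BP23}; your argument is more self-contained, at the price of a genuinely topological computation, while the paper's is a two-line reduction to a citation. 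For the open case your Verdier-duality argument (Lemma~\ref{lem_dual_calcul} to trade $\k_{x+\Int(\gamma)}$ for $\dual'_\V\k_{x+\gamma}$, Lemma~\ref{lem:26} to commute duality with the pushforward, properness of $u|_{x+\gamma}$ to replace $\dR u_!$ by $\dR u_*$, and the shifts $\dual_\V\simeq\dual'_\V[n]$, $\dual_\R\simeq\dual'_\R[1]$ yielding the net $[1-n]$) is exactly the paper's proof, and your shift bookkeeping is right.

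Two caveats you should repair when writing this up. First, in the closed case you conclude ``giving $\dR u_*\k_{x+\gamma}\simeq\k_{[u(x),+\infty)}$'' from the stalks alone; stalks do not determine a sheaf (e.g.\ $\k_{\{a\}}\oplus\k_{(a,+\infty)}$ has the same stalks as $\k_{[a,+\infty)}$), so you must also identify the generization/restriction maps, for instance by tracking the restriction maps in your colimit over intervals, or by exhibiting the natural morphism $\k_\R\to\dR u_*\k_\V\to\dR u_*\k_{x+\gamma}$ and checking it is an isomorphism on stalks over $[u(x),+\infty)$ with kernel $\k_{(-\infty,u(x))}$. Second, for $v\in\Int(\gamma^{\circ,a})$ the properness statement quoted from \cite[Lemma 4.7]{BP23} concerns $v|_{K+\gamma^a}$, not $v|_{x+\gamma}$; the fix is immediate ($-v\in\Int(\gamma^\circ)$ and $v|_{x+\gamma}$ is $-v|_{x+\gamma}$ composed with the homeomorphism $t\mapsto-t$ of $\R$), but as cited it does not literally cover the set you need. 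Neither point affects the validity of the overall strategy.
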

\begin{proof} 
For any $x \in V$, let $\tau_x  : \V \to \V$ be given by $y \mapsto x + y$. Then, $\tau_{-x}^{-1} \k_{\gamma} \simeq \k_{x + \gamma}$ and $u \tau_x = \tau_{u(x)} u$.
It follows from \cite[Lemma 4.8]{BP23}  that:  $\dR u_* (\k_{x + \gamma}) \simeq \dR (\tau_{u(x)} u)_* (\k_{\gamma}) \simeq \tau_{-u(x)}^{-1} \k_{_{[0,+ \infty)}} \simeq \k_{[u(x),+\infty)}.$
The second statement follows from the previous one, combined with Lemmas \ref{lem_dual_calcul} and \ref{lem:26}. \end{proof}

\begin{proof}[Proof of Theorem~\ref{thm_proj_bar}]
First we prove the statement for the closed conic-complex, then we deduce the statement for the open conic-complex.

It follows from Lemma \ref{lem_proj_barcode_constant} that $\k_{a + \gamma}$ are $u_*$- and $v_*$-acyclic. Thus, the result is a straightforward consequence of Lemma \ref{lem_acyclic} and Lemma \ref{lem_proj_barcode_constant}.

Only the case of $u$ will be treated here, the proof holds mutatis-mutandis for $v$. It follows from Proposition \ref{prop_constr} and from \cite[Proposition 3.4.3]{KS90} that $\ms U$ and $\ms Z$ are cohomologically constructible, in particular  $\dual_\V \dual_\V \ms U \simeq \ms U$ and $\dual_\V \dual_\V \ms Z \simeq \ms Z$. The following duality argument allows us to restrict the focus to the case of a finite closed conic-complex:
\[\dR u_* \ms U \simeq \dR u_* \dual_\V \dual_\V \ms U \simeq \dual_\R( \dR u_* \dual_\V \ms U) \simeq \dual_\R'(\dR u_* \dual'_\V \ms U)[1-n].\]
Indeed, Proposition \ref{prop_dual_resolution} ensures that $\dual_\V' \ms U \simeq \gamma\{J\}$.
Thus, the preceding part of this proof yields the following expression:
\[\dR u_* (\dual'_\V \ms U) : \quad \dots \to \bigoplus_{j_{1} \in J_1 } \k_{[u(j_{1}),+ \infty) } \to  \bigoplus_{j_{0} \in J_0} \k_{[u(j_{0}), + \infty)}.\]
A second application of Proposition \ref{prop_dual_resolution} concludes the proof.
\end{proof}

\section{From 1-d conic complexes to filtered cochain complexes}

\subsection{From finite conic-complexes to filtered finite cochain complexes}\label{subsec:qsw_conic}

Let again $\Lambda:=[0, +\infty)$. The barcode of a finite open $\Int(\Lambda)$-complex can be obtained from that of a finite filtered cochain complex as follows. This reduction is an essential step in our approach for the computation of the projected barcode of a conic-complex.

We start by recalling the notion of finite filtration and finite filtered cochain complex.
\begin{definition}
\begin{enumerate}[(i)]
\item A  finite filtration $C(\bullet)$ in $\Ch^b(\k)$ is the data of
a finite sequence $(C(i))_{0 \leq i \leq m}$ and $(f_{i-1})_{1 \leq i \leq m}$ where, $C(0)=0$, $C(i)$ is an object of $\Ch^b(\k)$ and $f_{i-1} \colon C(i-1) \to C(i)$ is a monomorphism for $1 \leq i \leq m$ in $\Ch^b(\k)$. This is summarized by
\begin{equation*}
\begin{tikzcd} 0 = C(0) \ar[r, "f_0", rightarrowtail] & \cdots \ar[r, "f_{m-1}", rightarrowtail] & C(m)\end{tikzcd}.
\end{equation*}
\item A filtered finite cochain complexes is the data of a cochain complex $C \in \Ch^b(\k)$ together with a finite filtration $(C(i))_ {i=0}^m$, $(f_{i-1})_{1 \leq i \leq m}$ such that $C(m)=C$.
\end{enumerate}
\end{definition}
We will identify a finite filtered complex with its filtration if there is no risk of confusion.
%
%
%
Such a filtration can be seen in two ways:
\begin{itemize}
    \item as a functor, i.e., an object of $\Fun([m],\Ch^b(\k))$;
    \item as a complex of persistence modules, i.e., an object of the abelian category $\Ch^b(\Fun([m],\Mod(\k)))$.
\end{itemize}
Both viewpoints will be used in the following.

A filtered cochain complex composed of finite-dimensional vector spaces yields, for any integer $d \in \Z$, a pfd persistence module $\H^d(C(\bullet))$, which by~\cite[Theorem 1.1]{C15}  admits a barcode $\B(\H^d(C(\bullet)))$, denoted by ${\B}^d(C(\bullet))$ from now on for simplicity, while ${\B}(C(\bullet))$ is a shorthand for the graded barcode $\{ {\B}^d(C(\bullet)) \}_{d \in \Z}$.

Let $\ms L = \Int(\Lambda)\{J\}$ be a finite $\Int(\Lambda)$-complex, and let $m := |J|$ be the cardinal of $J$ as a set (i.e. without counting the multiplicity of the elements). In other words:
\begin{equation*}
\ms L \quad \ldots \to 0 \to \bigoplus_{j \in J_0 } \k_{(j, + \infty) } \to \cdots \to \bigoplus_{j \in J_\nu} \k_{(j, + \infty)} \to 0 \to \cdots .
\end{equation*}
The sheaf $\ms L$ is a $\gamma$-sheaf for the cone $\gamma=\Lambda$. Consider the complex $\ms L^\a$ of Alexandrov sheaves on $\V_\mathfrak{a}$ given by
\begin{equation*}
\ms L^\mathfrak{a} \quad \ldots \to 0 \to \bigoplus_{j \in J_0 } \k_{[j, + \infty) } \to \cdots \to \bigoplus_{j \in J_\nu} \k_{[j, + \infty)} \to 0 \to \cdots
\end{equation*}
This complex is such that $\alpha^{-1} \ms L^\mathfrak{a} \simeq \ms L$ and corresponds through the morphism $\theta_*$ to the complex of free persistence modules on $(\R,\leq)$:
\begin{equation*}
L \quad \ldots \to 0 \to \bigoplus_{j \in J_0 } \k^{[j, + \infty) } \to \cdots \to \bigoplus_{j \in J_\nu} \k^{[j, + \infty)} \to 0 \to \cdots
\end{equation*}
We now associate a finite filtered cochain complex to $L$. The multiset $J$ of generators of $\ms L$ is a subset of $\R$. Thus, we order the generators using the increasing order, and we denote by $i(r)$  the $r^{th}$ generator in this order (numbering from one) and set arbitrarily $i(0)=i(1)-1$. Thus, we have a morphism of poset $i \colon [m] \to \R$, $ r \mapsto i(r)$. Let $i^{-1}L:=L \circ i$.
\begin{proposition}\label{prop_finite_filtration}
The filtration $i^{-1}L$ is a finite filtration.  
\end{proposition}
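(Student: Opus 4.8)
The goal is to verify that $i^{-1}L = L \circ i$ is a finite filtration in the sense of the definition just given, i.e. that it is a finite sequence of objects $C(0) = 0, C(1), \dots, C(m)$ in $\Ch^b(\k)$ connected by monomorphisms $f_{r-1}\colon C(r-1) \to C(r)$. Since $i\colon [m]\to\R$ is a morphism of posets, precomposition produces a functor $[m]\to\Ch^b(\k)$, which is the same as a sequence $C(0)\to C(1)\to\dots\to C(m)$ with $C(r) = L(i(r))$; there are three things to check: (a) that $C(0) = 0$; (b) that each $C(r)$ lies in $\Ch^b(\k)$, i.e. is a bounded complex of finite-dimensional $\k$-vector spaces; and (c) that each transition map is a monomorphism in $\Ch^b(\k)$, i.e. is injective in each cohomological degree.

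\textbf{Key steps.} First I would unwind what $L(t)$ is for $t\in\R$: by construction $L$ is the complex of free persistence modules $\bigoplus_{j\in J_\nu}\k^{[j,+\infty)}$ in cochain degree $\nu$, so $L(t)$ is the cochain complex whose degree-$\nu$ term is $\bigoplus_{j\in J_\nu,\, j\leq t}\k$, with differentials induced termwise from those of $L$. For (a): $i(0) = i(1) - 1$ was chosen strictly below the smallest generator, so no $j\in J$ satisfies $j\leq i(0)$, hence every term of $L(i(0))$ vanishes and $C(0) = 0$. For (b): $\ms L$ is a finite $\Int(\Lambda)$-complex, so each $J_\nu$ is finite and only finitely many $\nu$ are nonzero; therefore each $L(t)$ is a bounded complex with finite-dimensional terms, i.e. an object of $\Ch^b(\k)$. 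For (c): for $r-1 < r$ we have $i(r-1) < i(r)$ (the generators are listed in strictly increasing order, and $i(0)<i(1)$ by the choice above), and the structural map $L(i(r-1))\to L(i(r))$ is, in each degree $\nu$, the inclusion $\bigoplus_{j\leq i(r-1)}\k \hookrightarrow \bigoplus_{j\leq i(r)}\k$ of the sub-sum indexed by the smaller set of generators — this is manifestly injective, and it is a chain map because the differentials of $L$ restrict compatibly. Injectivity in each degree is precisely monomorphicity in $\Ch^b(\k)$ (monomorphisms in a category of cochain complexes over an abelian category are the degreewise monomorphisms), so each $f_{r-1}$ is a monomorphism. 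Finally, since $i(m)$ is the largest generator, $L(i(m))$ contains all generators in every degree, but the definition of finite filtration only requires the $f_{r-1}$ to be monos (it is the notion of filtered cochain complex that additionally demands $C(m) = C$); so (a)–(c) suffice.

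\textbf{Main obstacle.} There is no serious obstacle here — the statement is essentially a bookkeeping verification. The only point requiring a little care is the boundary convention $i(0) = i(1) - 1$: one must confirm it forces $C(0) = 0$ (it does, since it is chosen strictly below $\min J$), and that the numbering makes $i$ a genuine poset morphism $[m]\to\R$ with strict increments, so that the successive maps are honest inclusions rather than identities. Everything else follows from finiteness of $J$ and of the range of nonzero degrees of $\ms L$, together with the elementary fact that a termwise-injective chain map is a monomorphism of complexes.
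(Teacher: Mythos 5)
Your proof is correct and follows essentially the same route as the paper: both rest on the observation that the transition maps $L(i(r-1))\to L(i(r))$ are induced by the structural (identity) morphisms of the indicator modules $\k^{[j,+\infty)}$, hence are degreewise injective, with $C(0)=0$ coming from the choice $i(0)=i(1)-1$. The paper additionally records the isomorphism $i_!(i^{-1}L)\simeq L$ via the left Kan extension, but that is used later (in the proof of Theorem~\ref{thm_red_qsw}) rather than being needed for this proposition, so your more explicit bookkeeping verification is a faithful equivalent.
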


\begin{proof}
Consider $i_!(i^{-1}L)$ the left Kan extension along $i$ of $i^{-1}L$. Since $[m]$ is a finite poset, the functor $i_!$ is exact and a straightforward computation shows that $i_!(i^{-1}L) \simeq L$. In particular, $(i^{-1}L)(r)\simeq L(i(r))$ and for every $r \in [m]$ the structural morphism $i^{-1}L(r-1) \to i^{-1}L(r)$ is a monomorphism as it is induced by the internal morphisms of indicator modules.
\end{proof}

%
%
%
\begin{lemma} \label{lem:calcul_inter}
    Given  an interval  $I=\lb r,s \rb$ of $[m]$ and its associated indicator module $\k^I$, if $s \neq m$ then $i_! \k^I \simeq \k^{[i(r),i(s+1))}$, else $i_! \k^I \simeq \k^{[i(r),+\infty)}$.
\end{lemma}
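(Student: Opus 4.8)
We need to show: given an interval $I = \lb r, s \rb$ of $[m]$ with indicator module $\k^I$ over $[m]$, the left Kan extension $i_! \k^I$ along $i \colon [m] \to \R$ satisfies $i_! \k^I \simeq \k^{[i(r), i(s+1))}$ when $s \neq m$, and $i_! \k^I \simeq \k^{[i(r), +\infty)}$ when $s = m$.

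**Plan.** The plan is to compute the left Kan extension pointwise using the standard colimit formula. Recall that for $x \in \R$,
\[
(i_! \k^I)(x) \;\simeq\; \colim_{(r' \in [m],\, i(r') \leq x)} \k^I(r'),
\]
where the colimit is taken over the comma category of objects $r' \in [m]$ equipped with a morphism $i(r') \to x$ in $(\R, \leq)$, i.e.\ over all $r'$ with $i(r') \leq x$. Since $[m]$ is totally ordered and $i$ is order-preserving, this index category is itself a (possibly empty) finite totally ordered set, namely $\{\,r' \in [m] : i(r') \leq x\,\}$, and it has a terminal object whenever it is nonempty — the largest such $r'$, call it $r'_{\max}(x)$. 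A colimit over a category with a terminal object is just the value at that terminal object, so $(i_! \k^I)(x) \simeq \k^I(r'_{\max}(x))$ when $\{r' : i(r') \leq x\}$ is nonempty, and $(i_! \k^I)(x) \simeq 0$ otherwise (colimit over the empty category).

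**Case analysis on $x$.** First I would handle the empty case: if $x < i(r') $ for all $r' \in [m]$, i.e.\ $x < i(1)$ (recall $i(0) = i(1) - 1$ is not in the image we care about for $[m]$ — actually $0 \in [m]$, so one must be slightly careful and note $i(0) = i(1)-1$, but $\k^I(0) = 0$ anyway since $r \geq 1$; in fact for $x < i(r)$ we get $r'_{\max}(x) < r$ hence $\k^I(r'_{\max}(x)) = 0$). So the stalk is $0$ for $x < i(r)$. Next, for $x \geq i(r)$: the value $\k^I(r'_{\max}(x))$ is $\k$ precisely when $r \leq r'_{\max}(x) \leq s$, and $0$ when $r'_{\max}(x) > s$. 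Now $r'_{\max}(x) \leq s$ iff $x < i(s+1)$ (using that $i$ is strictly increasing on the relevant indices, so $i(s+1) \leq x$ would force $r'_{\max}(x) \geq s+1$), which makes sense only when $s \neq m$; when $s = m$ there is no index $s+1$ in $[m]$ and the condition $r'_{\max}(x) \leq s = m$ is automatic, so the stalk is $\k$ for all $x \geq i(r)$. Assembling: for $s \neq m$ the stalk is $\k$ on $[i(r), i(s+1))$ and $0$ elsewhere; for $s = m$ it is $\k$ on $[i(r), +\infty)$ and $0$ elsewhere. Finally I would check that the structural maps of $i_!\k^I$ are identities on the $\k$-region — this is immediate since $\k^I$ has identity structural maps and these are preserved by the colimit when the terminal object is constant across a segment — so $i_!\k^I$ really is the indicator module $\k^{[i(r), i(s+1))}$ (resp.\ $\k^{[i(r),+\infty)}$), not merely pointwise isomorphic to it.

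**Main obstacle.** There is no deep obstacle here; the only delicate points are bookkeeping ones: (1) making sure the strict monotonicity of $i$ on $\{1, \dots, m\}$ is used correctly so that the boundary $x = i(s+1)$ is genuinely excluded (the half-open nature of the interval comes exactly from $i(s+1) \leq x$ versus $i(s+1) > x$), and (2) correctly identifying the index category of the pointwise Kan extension as having a terminal object, which is what collapses the colimit — this is the one step worth stating carefully rather than waving through. One could alternatively phrase the whole argument via adjunction: $i_!$ is left adjoint to $i^{-1}$, and $\k^{[i(r), i(s+1))}$ (resp.\ $\k^{[i(r),+\infty)}$) is characterized by a universal property that one checks $i_!\k^I$ satisfies, but the pointwise colimit computation is the most transparent route and I would go with that.
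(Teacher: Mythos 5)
Your proposal is correct and follows the same route as the paper: the paper's proof is precisely the pointwise formula $i_!\k^I(t) \simeq \colim_{\{i(h)\le t\}} \k^I(h) \simeq \k^I(\max\{h \mid i(h)\le t\})$, i.e.\ the colimit over the comma poset collapses to its maximal element, followed by the same case analysis you carry out. Your write-up just spells out the bookkeeping (strict monotonicity of $i$, the empty-index case, the $s=m$ case) that the paper leaves implicit.
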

\begin{proof}
This follows from $i_!\k^I(t) \simeq \underset{\{i(h) \leq t\}}{\colim} \k^I(h) \simeq \k^I(\max\{h \mid i(h) \leq t\})$.
\end{proof}
Let $\mc I_{[m]}$ (resp. $\mc I_{\R}$) be the set of intervals of $[m]$ (resp. $\R$) and consider $i_! \colon \mc I_{[m]} \to \mc I_{\R}$~defined~by
\begin{align*}
i_!(\lb r, s \rb)= 
\begin{cases}
[i(r),i(s)) \quad \textnormal{if} \; s \neq m \\
[i(r), +\infty) \quad \textnormal{if} \; s = m
    \end{cases}
\end{align*}

For $I_{\mathfrak{a}}=\Omega_\mathfrak{a} \cap F_\mathfrak{a} $ a locally closed interval of $\R_\mathfrak{a}$, we define $\alpha^{-1}(I_\mathfrak{a}):= \Int(\Omega_\mathfrak{a}) \cap \overline{F_\mathfrak{a}}$. 

\begin{theorem}\label{thm_red_qsw}
For every integer $k$, the application 
$\alpha^{-1}i_!: \B^k(i^{-1}L) \to  \B^k(\ms L)$, $I \mapsto \alpha^{-1} i_!(I)$
is a well-defined bijection of multisets.
\end{theorem}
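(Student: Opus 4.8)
The plan is to reduce the statement to the corresponding fact for Alexandrov sheaves / persistence modules on $\R$, and then transport the barcode computation through the equivalences $\theta_\ast$, $\alpha^{-1}$, $\phi_\gamma^{-1}$. First I would observe that, by construction, $i_!(i^{-1}L)\simeq L$ (as recorded in the proof of Proposition~\ref{prop_finite_filtration}), and $i_!$ is an exact functor between abelian categories of complexes of persistence modules since $[m]$ is finite; hence $i_!$ commutes with taking cohomology, so for every $k$ we get $i_! \H^k(i^{-1}L) \simeq \H^k(L)$ as persistence modules. The persistence module $\H^k(i^{-1}L)$ over $[m]$ is pfd, so by~\cite{C15} it decomposes as a direct sum of interval modules $\bigoplus_{I\in\B^k(i^{-1}L)}\k^I$; applying the exact functor $i_!$ and Lemma~\ref{lem:calcul_inter} yields $\H^k(L)\simeq \bigoplus_{I\in\B^k(i^{-1}L)}\k^{i_!(I)}$, where $i_!(I)$ is the interval of $\R$ described in the statement. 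Thus the map $I\mapsto i_!(I)$ is an injection from $\B^k(i^{-1}L)$ onto the barcode of $\H^k(L)$ viewed as a persistence module on $\R$ (injectivity of $i_!$ on intervals is clear from its explicit formula, and the decomposition of $\H^k(L)$ into distinct intervals is unique by Krull--Remak--Schmidt / Azumaya).

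Next I would pass from the free complex $L$ of persistence modules to the conic-complex $\ms L$. The complex $L$ corresponds to $\ms L^{\mathfrak a}$ under the equivalence $\theta_\ast$, and $\alpha^{-1}\ms L^{\mathfrak a}\simeq \ms L$. Since $\theta^{-1}$ is an equivalence and $\alpha^{-1}$ is exact, for every $k$ we have $\theta_\ast\H^k(\ms L^{\mathfrak a})\simeq \H^k(L)$ and $\alpha^{-1}\H^k(\ms L^{\mathfrak a})\simeq \H^k(\ms L)$. Applying $\alpha^{-1}$ (or more precisely $\phi_\gamma^{-1}\alpha^{-1}$) to the interval decomposition of $\H^k(L)$, and using Proposition~\ref{prop_pullback_alpha} to identify $\alpha^{-1}\k_{(I_{\mathfrak a})}\simeq \k_{\alpha^{-1}(I_{\mathfrak a})}$ for a locally closed interval $I_{\mathfrak a}=\Omega_{\mathfrak a}\cap F_{\mathfrak a}$, we get $\H^k(\ms L)\simeq \bigoplus \k_{\alpha^{-1}(i_!(I))}$; one then checks $\alpha^{-1}(i_!(I))$ is exactly the open/half-open interval of $\R$ prescribed by the definitions, so that by Theorem~\ref{thm:Decomposition} this is the barcode $\B^k(\ms L)$. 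Composing the two bijections gives that $\alpha^{-1}i_!\colon \B^k(i^{-1}L)\to\B^k(\ms L)$ is a well-defined bijection of multisets, with multiplicities matching because all the functors involved are additive and the decompositions on both ends are unique.

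The main obstacle I anticipate is bookkeeping around the closed/open and half-open boundary conventions: one must verify carefully that $\alpha^{-1}$ applied to the closed-at-the-left, open-or-closed-at-the-right intervals $i_!(\lb r,s\rb)\subset \R_{\mathfrak a}$ produces precisely the intervals $(j,+\infty)$-type pieces appearing in $\ms L=\Int(\Lambda)\{J\}$, including the degenerate conventions $i(0)=i(1)-1$ and $s=m$ giving $[i(r),+\infty)$. This is exactly where Proposition~\ref{prop_pullback_alpha} and the definition $\alpha^{-1}(I_{\mathfrak a})=\Int(\Omega_{\mathfrak a})\cap\overline{F_{\mathfrak a}}$ do the work, and one should also confirm that the resulting intervals are nonempty (so no summand is killed), which follows from $r\le s$ and the strict monotonicity of $i$. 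A secondary, more routine point is to make sure the decomposition theorems being invoked (\cite{C15} over $[m]$, and Theorem~\ref{thm:Decomposition} over $\R$) apply: over $[m]$ this is immediate as everything is finite-dimensional, and over $\R$ it follows from Proposition~\ref{prop_constr} which guarantees $\ms L$ is constructible with a finite barcode in each degree.
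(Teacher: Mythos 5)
Your proposal is correct and follows essentially the same route as the paper: exactness of $i_!$, $\theta^{-1}$, $\alpha^{-1}$ together with $\alpha^{-1}\theta^{-1}i_!i^{-1}L\simeq\ms L$, the interval decomposition of $\H^k(i^{-1}L)$ pushed forward via Lemma~\ref{lem:calcul_inter} and Proposition~\ref{prop_pullback_alpha}, and uniqueness of the barcode decomposition (Theorem~\ref{thm:Decomposition}) to conclude. The only cosmetic difference is that you split the transport into two stages (first $i_!$ over persistence modules on $\R$, then $\theta^{-1}$ and $\alpha^{-1}$) and invoke Azumaya over $[m]$, whereas the paper applies the composite functor in one step and concludes directly from the uniqueness of the decomposition on $\R$.
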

\begin{proof}
We first prove that $\alpha^{-1}i_!: \B^k(i^{-1}L) \to  \B^k(\ms L)$ is a well defined bijection. Remark that $\alpha^{-1} \theta^{-1} i_! i^{-1}L \simeq \alpha^{-1} \ms L^\mathfrak{a} \simeq \ms L$ and that the functors $\alpha^{-1}, \; \theta^{-1}, \; i_!, \; i^{-1}$ are exact. Hence, $\alpha^{-1} \theta^{-1} i_! \H^k(i^{-1}L) \simeq \H^k(\ms L)$. Moreover,  $\H^k(i^{-1}L)$ admits a barcode decomposition $\bigoplus_{I \in \B^k(i^{-1}L)} \k_{I}$. Hence, using  Lemma \ref{lem:calcul_inter} and Proposition \ref{prop_pullback_alpha}, we get
\begin{align*}
\alpha^{-1} \theta^{-1} i_! \H^k(i^{-1}L) &\simeq  \bigoplus_{I \in \B^k(i^{-1}L)} \k_{\alpha^{-1} i_!(I)}.
\end{align*}
Furthermore, $\H^k(\ms L)$ admits a barcode decomposition $\bigoplus_{I^\prime \in \B^k( \ms L)} \k_{I^\prime}$. Thus, 
\begin{equation*}
\bigoplus_{I^\prime \in \B^k( \ms L)} \k_{I^\prime} \simeq \bigoplus_{I \in \B^k(i^{-1}L)} \k_{\alpha^{-1} i_!(I)}. 
\end{equation*}
The unicity of the barcode decomposition (Theorem \ref{thm:Decomposition}) yields that the application $\alpha^{-1}i_! : \B^k(i^{-1}L) \to  \B^k(\ms L) $ is a well-defined bijection. 
\end{proof}


\section{Extension of the persistence algorithm} \label{sec:extension_algo}

The persistence algorithm 
applies to cochain complex filtrations arising from simplex-wise filtrations of simplicial complexes. However, as we shall see in Section~\ref{sec:conic-complexes}, conic-complexes yield cochain complex filtrations that do not necessarily fit this framework. Fortunately, the only differences are the following ones:
(1) several "simplices" can appear at the same time in the filtration;
(2) boundary matrices do not admit a fixed number of non zero entries depending on the dimension of the simplex.
These differences can be overcome by a straightforward extension of the persistence algorithm.

\subsection{Simplex-wise filtered cochain complexes}

%
%
\begin{definition}\label{def:simplex-wise}
The filtration $C(\bullet)$ is called \emph{simplex-wise} if, for every $1 \leq i \leq m$, we have $\coker(f_{i-1}) \simeq \k[q_i]$  for some $q_i\in\Z$. In this case, each morphism~$f_i$ in the sequence is called a {\em simplex insertion}.
\end{definition}

\begin{remark}
The terminology comes from simplex-wise filtered simplicial complexes, which naturally yield simplex-wise filtrations.
\end{remark}

From now on, we assume $C(\bullet)$ to be simplex-wise. For any $0 \leq i \leq m$ and any integer $\ell$, we denote by $\beta^\ell(C(i))$ the dimension of $\H^\ell(C(i))$.

\begin{proposition}\label{prop_creator_destroyer_complex}
For any $1 \leq i \leq m$, we have $\beta^\ell(C(i)) = \beta^\ell(C(i-1))$ for all $\ell \notin \{q_i,\, q_i+1\}$, and we have either $\beta^{q_i}(C(i)) = \beta^{q_i}(C({i-1})) + 1$ (index $i$ is then called a \emph{creator} of degree $q_i$) or $\beta^{q_i +  1}(C(i)) = \beta^{q_i + 1}(C({i-1})) - 1$ (index $i$ is then called a \emph{destructor} of degree $q_i$). 
\end{proposition}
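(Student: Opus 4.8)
The plan is to analyze the short exact sequence of cochain complexes induced by the simplex insertion $f_{i-1} \colon C(i-1) \to C(i)$ and apply the long exact sequence in cohomology. Since $f_{i-1}$ is a monomorphism in $\Ch^b(\k)$ with $\coker(f_{i-1}) \simeq \k[q_i]$, we obtain a short exact sequence
\[
0 \to C(i-1) \to C(i) \to \k[q_i] \to 0
\]
in $\Ch^b(\k)$. The complex $\k[q_i]$ has cohomology $\k$ concentrated in degree $-q_i$ (with the sign convention of the paper, so that $\H^{-q_i}(\k[q_i]) = \k$); I would first pin down the exact degree so the bookkeeping is consistent with Definition~\ref{def:simplex-wise}, then carry that convention through. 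Writing out the associated long exact sequence in cohomology, the only spots where $C(i)$ and $C(i-1)$ can differ are in the two consecutive degrees adjacent to the degree where $\k[q_i]$ lives, which gives the claim $\beta^\ell(C(i)) = \beta^\ell(C(i-1))$ for all $\ell$ outside $\{q_i, q_i+1\}$ (again modulo fixing the indexing convention).

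The heart of the argument is the local picture of the long exact sequence around those two degrees. Setting $d := q_i$ (the degree carrying the nontrivial cohomology of the cokernel), the relevant fragment reads
\[
0 \to \H^{d}(C(i-1)) \to \H^{d}(C(i)) \to \k \xrightarrow{\ \partial\ } \H^{d+1}(C(i-1)) \to \H^{d+1}(C(i)) \to 0,
\]
where exactness on the left uses that the previous terms $\H^{d-1}(\k[q_i])$ vanish, and exactness on the right uses that $\H^{d+1}(\k[q_i])$ vanishes. The connecting homomorphism $\partial \colon \k \to \H^{d+1}(C(i-1))$ is either zero or injective. If $\partial = 0$, the sequence splits into $0 \to \H^{d}(C(i-1)) \to \H^{d}(C(i)) \to \k \to 0$ and $0 \to \H^{d+1}(C(i-1)) \to \H^{d+1}(C(i)) \to 0$, yielding $\beta^{d}(C(i)) = \beta^{d}(C(i-1)) + 1$ and $\beta^{d+1}(C(i)) = \beta^{d+1}(C(i-1))$; this is the creator case. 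If $\partial \neq 0$, then $\H^{d}(C(i)) \xrightarrow{\sim} \H^{d}(C(i-1))$ and $0 \to \k \to \H^{d+1}(C(i-1)) \to \H^{d+1}(C(i)) \to 0$ is exact, yielding $\beta^{d}(C(i)) = \beta^{d}(C(i-1))$ and $\beta^{d+1}(C(i)) = \beta^{d+1}(C(i-1)) - 1$; this is the destructor case. Taking dimensions (all spaces are finite-dimensional since $C(\bullet)$ is a bounded complex of finite-dimensional vector spaces) and using additivity of dimension in short exact sequences gives exactly the stated dichotomy.

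I would then just remark that the two cases are mutually exclusive and exhaustive because $\partial$ is a linear map out of a one-dimensional space, so it is either zero or injective; this justifies the ``either \ldots or'' in the statement and the accompanying creator/destructor terminology. The main obstacle is essentially bookkeeping rather than mathematics: getting the degree shift in $\k[q_i]$ right and making sure the ``$q_i$ versus $q_i+1$'' labelling in the statement matches the placement of the nontrivial cohomology of the cokernel and of the connecting map. Once the convention is fixed, the proof is a direct application of the long exact sequence, and I would keep it to a few lines, citing the standard long exact sequence in cohomology of a short exact sequence of complexes.
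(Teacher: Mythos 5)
Your argument is correct: since $f_{i-1}$ is a monomorphism in $\Ch^b(\k)$ with one-dimensional cokernel concentrated in a single degree, the short exact sequence $0 \to C(i-1) \to C(i) \to \coker(f_{i-1}) \to 0$ and its long exact cohomology sequence give exactly the stated dichotomy, via the alternative ``connecting map zero or injective''; the degree bookkeeping you flag is harmless, since the paper's own proof makes clear that the cokernel sits in cohomological degree $q_i$, and finite-dimensionality is automatic because $C(0)=0$ and each insertion adds one dimension. This is, however, a genuinely different route from the paper's: there the authors never invoke the long exact sequence, but instead choose a splitting $C(i)^{q_i} = \im(f_{i-1}^{q_i}) \oplus \k_{q_i}$, check by hand that kernels and images of the differentials agree in all degrees $\ell \neq q_i$, and then set $\eta_i = \dim\ker(d_i^{q_i}) - \dim\ker(d_{i-1}^{q_i})$, showing $\eta_i \in \{0,1\}$ by rank--nullity and a codimension-one argument. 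Your proof is shorter and more conceptual, and it yields in one stroke the extra information (the complementary Betti number is unchanged in each case) that the paper also needs for the subsequent pairing discussion; the paper's proof is more elementary and its quantity $\eta_i$ -- whether the new generator enlarges the kernel of the differential -- is essentially your connecting homomorphism in disguise, phrased in a way that lines up directly with the boundary-matrix/persistence-algorithm viewpoint exploited in Section~\ref{sec:twistoncochaincomplex}. Either proof is acceptable.
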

\begin{proof}
Since $f_{i-1}$ is a morphism of complexes, we have the following commutative diagram, where the internal direct-sum decomposition $C(i)^{q_i}=\im(f_{i-1}^{q_i})\oplus\k_{q_i}$ involves an arbitrary vector space complement  $\k_{q_i}\simeq\k$ of~$\im(f_{i-1}^{q_i})$ in~$C(i)^{q_i}$:
\[\begin{tikzcd}
	\cdots & C(i-1)^{q_i - 1} & C(i-1)^{q_i} & C(i-1)^{q_i + 1} & \cdots \\[10pt]
	\cdots & C(i)^{q_i - 1} & \im(f_{i-1}^{q_i}) \oplus \k_{q_i} & C(i)^{q_i + 1} & \cdots 
	\arrow[from=1-1, to=1-2]
	\arrow["d_{i-1}^{q_i -1}", from=1-2, to=1-3]
	\arrow["d_{i-1}^{q_i}", from=1-3, to=1-4]
	\arrow["d_{i-1}^{q_i+1}", from=1-4, to=1-5]
	\arrow["d_{i}^{q_i -1}", from=2-2, to=2-3]
	\arrow["d_{i}^{q_i}", from=2-3, to=2-4]
    \arrow["d_{i}^{q_i+1}", from=2-4, to=2-5]
	\arrow[from=2-1, to=2-2]
	\arrow["f_{i-1}^{q_i-1}", "\isoarrow"', from=1-2, to=2-2]
	\arrow["f_{i-1}^{q_i}", tail, from=1-3, to=2-3]
	\arrow["f_{i-1}^{q_i+1}", "\isoarrow"', from=1-4, to=2-4]
\end{tikzcd}\]
Then, for all $\ell\neq q_i$ we have:
\[\begin{array}{ccccccc}
    \ker (d_{i-1}^\ell) &=& \ker (f_{i-1}^{\ell+1}\circ d_{i-1}^\ell) &=& \ker (d_i^\ell\circ f_{i-1}^\ell) &\simeq& \ker (d_i^\ell);\\[1ex]
\im (d_{i-1}^\ell) &\simeq& \im (f_{i-1}^{\ell+1}\circ d_{i-1}^\ell) &=& \im (d_i^\ell \circ f_{i-1}^\ell) &=& \im (d_{i}^\ell).
\end{array}\]

Letting $\eta_{i} :=  \dim \ker (d_i^{q_i}) - \dim \ker(d_{i-1}^{q_i})$, we deduce that:
\begin{itemize}
\item $\beta^\ell(C(i)) = \beta^\ell(C(i-1))$ for all $\ell\notin\{q_i, q_i+1\}$;
\item $\beta^{q_i} (C(i)) = \beta^{q_i}(C(i-1)) + \eta_{i}$;
\item $\beta^{q_i+1}(C(i)) = \beta^{q_i + 1}(C(i-1)) + \eta_{i} - 1$ (by the rank-nullity theorem).
\end{itemize}

\noindent Meanwhile, the commutative diagram also implies that:
\[\ker(d_{i-1}^{q_i}) = \ker (f_{i-1}^{q_i+1}\circ d_{i-1}^{q_i}) = \ker (d_i^{q_i}\circ f_{i-1}^{q_i}) \simeq \ker(d_i^{q_i})\cap \im(f_{i-1}^{q_i}),\]
where $\im(f_{i-1}^{q_i})$ has codimension~$1$ in the domain of~$d_i^{q_i}$. It follows that $\eta_{i} \in \{0,1\}$, which concludes the proof.
\end{proof}

\begin{remark}
When $C(\bullet)$ is the finite filtration with a simplex-wise filtered simplicial complex, the notions of creator and destructor coincide with the usual notions of creator simplex and destructor simplex.
\end{remark}

Proposition~\ref{prop_creator_destroyer_complex} implies that, for each time~$i\in [m]\setminus\{0\}$, exactly one interval endpoint of~${\B}(C(\bullet))$ is located at~$i$. As a consequence, the graded barcode ${\B}(C(\bullet))$ pairs each destructor~$j$ in degree~$q_j$ with a creator~$i$ in degree~$q_i=q_j+1$.

\begin{definition}
 The pairs $(i,j)$ thus created are called \emph{persistence pairs}. Creators that remain unpaired are referred to as \emph{essential} creators.
\end{definition}

\subsection{From arbitrary finite filtrations to simplex-wise filtrations}\label{sec_qsimplexwise}

A reduction to the simplex-wise case allows for the application of the persistence algorithm in the more general setting where several simplex insertions may happen at the same time.

As before, let $C(\bullet)$ be a finite filtration in $\Ch^b(\k)$, that is: 
\begin{equation*}
\begin{tikzcd} 0 = C(0) \ar[r, "f_0", rightarrowtail] & \cdots \ar[r, "f_{m-1}", rightarrowtail] & C(m)\end{tikzcd}.
\end{equation*}
We have the following commutative diagram for every $1\leq i\leq m$, where each internal direct-sum decomposition $C(i)^j=\im(f_{i-1}^j) \oplus N_j$ involves an arbitrarily-chosen vector-space complement  $N_{j}$ of~$\im(f_{i-1}^j)$ in~$C(i)^{j}$---note that $d_i^j$ maps $\im(f_{i-1}^j)$ to $\im(f_{i-1}^{j+1})$ but may not always map $N_j$ to $N_{j+1}$:

\begin{equation*}
\adjustbox{scale=0.999,center}{%
\begin{tikzcd}
	\cdots & C(i-1)^{j-1} & C(i-1)^{j} & C(i-1)^{j+1} & \cdots \\[10pt]
	\cdots & \im(f_{i-1}^{j-1}) \oplus N_{j-1} & \im(f_{i-1}^{j}) \oplus N_{j} & \im(f_{i-1}^{j+1}) \oplus N_{j+1} & \cdots 
	\arrow[from=1-1, to=1-2]
	\arrow["d_{i-1}^{j-1}", from=1-2, to=1-3]
	\arrow["d_{i-1}^{j}", from=1-3, to=1-4]
	\arrow["d_{i-1}^{j+1}", from=1-4, to=1-5]
	\arrow["d_{i}^{j-1}", from=2-2, to=2-3]
	\arrow["d_{i}^{j}", from=2-3, to=2-4]
    \arrow["d_{i}^{j+1}", from=2-4, to=2-5]
	\arrow[from=2-1, to=2-2]
	\arrow["f_{i-1}^{j-1}", tail, from=1-2, to=2-2]
	\arrow["f_{i-1}^{j}", tail, from=1-3, to=2-3]
	\arrow["f_{i-1}^{j+1}", tail, from=1-4, to=2-4]
\end{tikzcd}
}
\end{equation*}

In order to turn~$C(\bullet)$ into a simplex-wise filtration, our approach is to write every morphism $f_{i-1}$ in the sequence as  a composition of finitely many morphisms that are each a simplex insertion as per Definition~\ref{def:simplex-wise}. For this, we start from the complex $C(i-1)$, map it to the image of~$f_{i-1}$, then insert the vector space complements $N_j$ one by one,  until we eventually get $C(i)$. The key point is to insert the vector space complements $N_j$ in decreasing order of~$j$, so that the codomain of~$d_i^j$ is ready when $N_j$ is being added to $\im (f_{i-1}^j)$. The details of the procedure are given in Appendix~\ref{sec:appendix_proc}.

Once this is done, we have decomposed $f_{i-1}$ into a finite sequence of simplex insertions. We can index the intermediate complexes over an arbitrary finite subset $T_i$ of the open interval~$(i-1, i)$. Doing this for every $1\leq i\leq m$ yields a simplex-wise filtration~$[C(\bullet)]_{\mathrm{sw}}$, indexed over $[m]\cup \bigcup_{i=1}^m T_i$, whose restriction to the  grid~$[m]$ is isomorphic to $C(\bullet)$ by construction.  Since restrictions preserve finite direct sums, we conclude:

\begin{proposition}\label{prop_finite_simplexwise_barcode}
For each degree~$j$, the map $[a,\, b) \mapsto [\lceil a \rceil,\, \lceil b \rceil)$
induces a bijection of multisets $\B' \to \B^j(C(\bullet))$, where 
$\B' := \{[a,b) \in \B^j([C(\bullet)]_{\mathrm{sw}}) \mid \lceil a \rceil < \lceil b \rceil \}$.
\end{proposition}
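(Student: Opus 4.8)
The statement compares the graded barcode of the original finite filtration $C(\bullet)$, indexed over the integer grid $[m]$, with that of the refined simplex-wise filtration $[C(\bullet)]_{\mathrm{sw}}$, indexed over $[m]\cup\bigcup_{i=1}^m T_i$. By construction, the restriction of $[C(\bullet)]_{\mathrm{sw}}$ to the subposet $[m]$ is isomorphic to $C(\bullet)$. The plan is to transport this fact to the level of cohomology and then to barcodes. First I would fix a degree $j$ and apply the cohomology functor $\H^j$ in each degree; since restriction along the inclusion of posets is exact and commutes with $\H^j$, the persistence module $\H^j(C(\bullet))$ is isomorphic to the restriction of $\H^j([C(\bullet)]_{\mathrm{sw}})$ to $[m]$.

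\textbf{From module restriction to barcode restriction.} The second step is to understand how the barcode behaves under restriction to the subposet $[m]$ of the total (finite, totally ordered) index set. Writing $\H^j([C(\bullet)]_{\mathrm{sw}})\simeq\bigoplus_{[a,b)\in\B^j([C(\bullet)]_{\mathrm{sw}})}\k^{[a,b)}$, and noting each index set is finite and totally ordered, restriction is additive, so it suffices to compute the restriction of a single interval module $\k^{[a,b)}$ to $[m]$. Because each $T_i\subset(i-1,i)$, the intersection of $[a,b)$ with the grid $[m]$ is exactly the integer interval $[\lceil a\rceil,\lceil b\rceil)$; when $\lceil a\rceil=\lceil b\rceil$ this is empty and the summand dies under restriction, otherwise it gives the interval module $\k^{[\lceil a\rceil,\lceil b\rceil)}$. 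Hence the restriction of $\H^j([C(\bullet)]_{\mathrm{sw}})$ to $[m]$ decomposes as $\bigoplus_{[a,b)\in\B'}\k^{[\lceil a\rceil,\lceil b\rceil)}$, with $\B'$ as defined in the statement.

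\textbf{Conclusion via uniqueness of the decomposition.} Combining the two steps, $\H^j(C(\bullet))$ admits the interval decomposition $\bigoplus_{[a,b)\in\B'}\k^{[\lceil a\rceil,\lceil b\rceil)}$, while by definition $\H^j(C(\bullet))\simeq\bigoplus_{I\in\B^j(C(\bullet))}\k^{I}$. Uniqueness of barcode decompositions for pfd persistence modules over a totally ordered set (\cite[Theorem 1.1]{C15}) then forces the multiset $\{[\lceil a\rceil,\lceil b\rceil)\mid [a,b)\in\B'\}$ to equal $\B^j(C(\bullet))$, which is precisely the asserted bijection of multisets induced by $[a,b)\mapsto[\lceil a\rceil,\lceil b\rceil)$.

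\textbf{Main obstacle.} The only genuinely delicate point is the bookkeeping in the second step: one must be careful that the ceiling map is the \emph{correct} description of ``intersect with the grid'' for both endpoints simultaneously, including the boundary cases $a\in\Z$ (where $\lceil a\rceil=a$, and $a$ itself lies in $[m]$, so the grid-intersection indeed starts at $a$) and the degenerate case $\lceil a\rceil=\lceil b\rceil$ (where $[a,b)$ contains no integer and contributes nothing). Everything else is a routine consequence of exactness of restriction and uniqueness of interval decompositions. I would phrase the argument so that it applies uniformly, extracting the one-line computation $[a,b)\cap[m]=[\lceil a\rceil,\lceil b\rceil)$ as a small observation before invoking additivity.
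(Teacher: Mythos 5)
Your argument is correct and is essentially the paper's own (very terse) proof spelled out: the paper likewise observes that $[C(\bullet)]_{\mathrm{sw}}$ restricted to the grid $[m]$ is isomorphic to $C(\bullet)$ and concludes by additivity of restriction on interval summands, which is exactly your computation $[a,b)\cap[m]=[\lceil a\rceil,\lceil b\rceil)$ followed by uniqueness of the interval decomposition. No gaps; your version just makes the exactness of restriction and the boundary cases explicit.
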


\subsection{Persistence algorithm on simplex-wise filtered cochain complexes} \label{sec:twistoncochaincomplex}

The persistence algorithm recovers the persistence pairs from the boundary matrix associated to a filtered simplicial complex. The differentials in a simplex-wise filtered cochain complex yield such a matrix. 
%

\begin{definition}
The \emph{boundary matrix} of a simplex-wise filtration $C(\bullet)$ is the matrix $\partial \in \k^{m \times m}$ defined by $\partial_{i,j} := \alpha_{ij}$, where the map $\k_j \to \k_i$ arising from $d_j^{q_j}$ is the scalar multiplication by~$\alpha_{ij}$.
\end{definition}

\begin{remark} When $C(\bullet)$ is the cochain complex of a simplex-wise filtered simplicial complex~$K$, its boundary matrix coincides with the usual co-boundary matrix of~$K$.  \end{remark}

We can now feed our boundary matrix~$\partial$ as input to the persistence algorithm, which then decomposes it as $\partial = RU$ with $R$ reduced and $U$ upper triangular. The proofs of the Pairing Uniqueness Lemma and of the fact that $(i,j)$ is a persistence pair if, and only if, $\low(R_j) = i$, hold verbatim in our context---see~\cite[Proposition 3.5 and Theorem 3.6]{DW22} for the case~$\k=\Z_2$. 
%
%
Consequently:
\begin{corollary}
Given $C(\bullet)$ simplex-wise, for any $d \in \Z$ the $d$-th barcode is given by:
\[ \B^d(C(\bullet)) = \{ [a,b) \mid (a,b) \in \mc P_d \} \coprod \{ [a, +\infty) \mid a \in \mc E_d \}, \]
where $\mc P_d$ and $\mc E_d$ denote respectively: the multiset of persistence pairs with a creator of degree $d$, and the multiset of essential creators of degree $d$.
\end{corollary}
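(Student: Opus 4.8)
The plan is to derive the barcode formula directly from the persistence-pairing combinatorics established just before the corollary. Since $C(\bullet)$ is simplex-wise, Proposition~\ref{prop_creator_destroyer_complex} tells us that at each time $i \in [m]\setminus\{0\}$ exactly one interval endpoint of $\B(C(\bullet))$ is placed at $i$: either $i$ is a creator of some degree $q_i$, opening a new bar in degree $q_i$, or $i$ is a destructor of degree $q_i$, closing a bar in degree $q_i+1$. The preceding discussion then matches each destructor $j$ (in degree $q_j$) with a unique creator $i$ (in degree $q_i = q_j+1$) via the graded barcode, producing the persistence pairs, and leaves the unpaired creators as the essential ones.

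First I would fix a degree $d$ and count, for each $0 \le i \le m$, how $\beta^d(C(i))$ evolves as $i$ increases from $0$ to $m$, using Proposition~\ref{prop_creator_destroyer_complex}. The value $\beta^d$ increases by $1$ exactly when $i$ is a creator of degree $d$, decreases by $1$ exactly when $i$ is a destructor of degree $d-1$, and is unchanged otherwise. Since $\beta^d(C(0)) = 0$, the multiset of left endpoints of bars in $\B^d(C(\bullet))$ is precisely the set of creators of degree $d$, and the multiset of right endpoints (those that are finite) is precisely the set of destructors of degree $d-1$. Next I would invoke the correctness of the persistence algorithm on the boundary matrix $\partial$: by the Pairing Uniqueness Lemma and the characterization $\low(R_j) = i \iff (i,j)$ is a persistence pair (valid verbatim in our setting, as noted via \cite[Proposition 3.5 and Theorem 3.6]{DW22}), the reduction $\partial = RU$ recovers exactly the multiset of persistence pairs, hence also the complementary set of essential (unpaired) creators.

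Combining these two facts, a creator $a$ of degree $d$ is either paired with a destructor $b$ of degree $d-1$, in which case the bar it opens is closed at $b$, contributing $[a,b)$ to $\B^d(C(\bullet))$; or it is essential, contributing the infinite bar $[a,+\infty)$. These two cases are mutually exclusive and exhaustive, and each bar of $\B^d(C(\bullet))$ arises this way exactly once since left endpoints are in bijection with degree-$d$ creators. This yields
\[
\B^d(C(\bullet)) = \{ [a,b) \mid (a,b) \in \mc P_d \} \coprod \{ [a, +\infty) \mid a \in \mc E_d \},
\]
which is the claimed identity.

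I do not expect a serious obstacle here: the statement is essentially a repackaging of Proposition~\ref{prop_creator_destroyer_complex} together with the already-asserted correctness of the matrix-reduction pairing. The only point requiring a little care is bookkeeping the degree shift—destructors of degree $q_j$ close bars in degree $q_j+1$—so that the persistence pairs contributing to $\B^d$ are exactly those with a creator of degree $d$ (equivalently a destructor of degree $d-1$); getting this indexing consistent with the definitions of $\mc P_d$ and $\mc E_d$ is the main thing to verify.
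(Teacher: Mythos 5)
Your proposal is correct and follows essentially the same route as the paper, which states the corollary as an immediate consequence of Proposition~\ref{prop_creator_destroyer_complex} (each index contributes exactly one endpoint, with the degree shift creator $d$ / destructor $d-1$) together with the asserted validity of the Pairing Uniqueness Lemma and the $\low(R_j)=i$ characterization from~\cite{DW22}. Your degree bookkeeping matches the paper's conventions, so there is nothing to add.
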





%
%

\section{Algorithm for projected barcode} \label{sec:algo_proj_barcode}

Let us summarize the situation. The computation of the projected barcode of a finite open conic-complex on $\R^n$ along a linear form $u \in \Int(\gamma^\circ)$ starts by reducing the problem to computing the barcode of a finite $\Int(\Lambda)$-complex on $\R$ (see Section \ref{subsec:direct_image} and Theorem \ref{thm_proj_bar}). This finite $\Int(\Lambda)$-complex on $\R$ admits a corresponding complex $L$ of free persistence modules that shares the same observable barcode (Section \ref{subsec:qsw_conic} and Proposition \ref{prop_pullback_alpha}). According to Section \ref{subsec:qsw_conic}, one can associate to $L$ a finite filtered cochain complex $i^{-1}L$ such that the barcode of $L$ can be reconstructed from the barcode of $i^{-1}L$. Meanwhile, according to Section~\ref{sec_qsimplexwise},  one can associate to a finite filtered cochain complex, here $i^{-1}L$, a simplexwise filtered cochain complex $[i^{-1}L]_{\mathrm{sw}}$ whose barcode gives the barcode of $i^{-1}L$ via restriction. Finally, the barcode of  $[i^{-1}L]_{\mathrm{sw}}$ can be computed using the standard persistence algorithm. In this section, we combine all these ingredients together into an effective algorithm, focusing on the special case where the cone $\gamma$ is the positive orthant because this is the standard setup in topological data analysis and because the details of the algorithm are simpler in this case---especially in the 2-parameter setting, as we shall see in Section~\ref{sec:algo_2-params}.

\bigskip

Let $\ms L := \Int(\gamma)\{G\}$ with $G = \{ g_i \}_{i=1}^\mu$ a finite multi-subset of $\V = \R^n$ with $n > 1$ endowed with the norm~$\norm{\cdot}_{\infty}$. We let $m:=\vert G \vert$ be the cardinal of $G$ as a set and $\mu$ its cardinal as a multiset (i.e. taking into account the multiplicities of the elements of $G$). The dual vector space $\V^*$ endowed with the operator norm identifies with $(\R^n, \norm{\cdot}_1)$. Let our cone $\gamma$ be the positive orthant $[0,+\infty)^n$, which corresponds to $\leq$ being the product partial order on $\R^n$. Then, $\gamma^\circ$ identifies also with $[0,+\infty)^n$. A linear form is called \emph{relevant} if it is of unit length and belongs to $\Int(\gamma^\circ)$.

The differentials of the complex $\dR u_* \ms L$ for any relevant linear form $u \in \V^*$ admit the same matrices as the differentials of $\ms L$ (this follows from the definition of the pushforward for sheaves). In particular, according to Section~\ref{subsec:qsw_conic},  the projected barcode along different linear forms essentially differs when the preorders on the generators of~$\ms L$ induced by these linear forms differ. This symmetry can be translated into a combinatorial structure that encodes the essential information describing the projected barcode, and that takes the form of a hyperplane arrangement augmented with one barcode template per face. We call this combinatorial structure the \emph{projected barcode template} of~$\ms L$, or $\PBT(\ms L)$ for short. 
As in RIVET~\cite{LW15}, its construction can be done in a pre-processing step, and a point location data structure built on top of the arrangement allows then for the efficient query of projected barcodes along linear forms.

\subsection{Arrangement of hyperplanes and point location}
\label{sec:arrangement}

We describe the hyperplane arrangement mentioned above. Its use will be clarified in the next section. The structure of the hyperplane arrangement is controlled on the one hand by the geometry of the domain of the $\gamma$-linear projected barcode, that is: the linear forms under consideration are elements of $\Int(\gamma^{\circ}) \cap \S^*$, where $\S^*$ can be identified with the unit sphere in the norm $\norm{\cdot}_1$;  on the other hand, the structure of the hyperplane arrangement is controlled by the set of generators of the conic-complex $\ms L$.

Here are the details. Observe that the set of relevant linear forms is the relatively open simplex $S=\{ v \in \Int(\gamma^\circ) \mid \norm{v}_1 = 1 \}$ in $\V^*$, that is: the intersection of $\Int(\gamma^\circ)$ with the affine hyperplane $T$ of equation $\sum_{i=1}^n x_i = 1$. Let $H_0$ be the collection of vector hyperplanes $H_{ij}\subset \V^*$ of linear forms vanishing on $g_{ij}:=  g_i - g_j$, for $i<j \in \lb 1, m \rb$, and let $H$ be the collection of their restrictions to~$T$---which are affine hyperplanes of~$T$. Without loss of generality, we assume that all the elements of~$H_0$ are distinct. Locating a relevant linear form~$u$ in the associated arrangement~$\mc A(H_0)$ is then equivalent to locating~$u$ in the restriction of~$\mc A(H_0)$ to~$T$, which itself is the arrangement~$\mc A(H)$  associated to~$H$ in~$T$. Henceforth, we implicitly identify $T$ with $\R^{n-1}$ via the projection $(x_1, \ldots, x_n) \mapsto (x_2, \ldots, x_n)$, which identifies $\mc A(H)$ with an arrangement of affine hyperplanes in $\R^{n-1}$.
Here, we use the notion of $k$-face as in \cite{M93} with the difference that here a \emph{face} refers to a face of maximal (affine) dimension whenever the dimension is not specified. Moreover, we call {\em relevant} any face of $\mc A(H)$ that intersects $\Int(\gamma^\circ)$. 

Note that the hyperplanes in $\mc A(H)$ may very much be in degenerate position. Therefore, we do not use Meiser's original data structure~\cite{M93} for point location in the arrangement, but rather the variant described in~\cite{ezra2020decomposing}, which can handle degeneracies and has the finer and more accurate complexity bounds of $\mr O(\frac{(n-1)^5}{\log (n-1)}\log m)$ query time, and $\mr O(m^{(6+o_n(1))(n-1)})$ storage space and maximum expected pre-processing time, for our $\mr O(m^2)$ hyperplanes in $n-1$~dimensions, where $o_n(1)$ denotes a term depending only on~$d$ that goes to zero as~$d\to\infty$.

\subsection{Projected barcode template}
\label{sec:proj_barcode_template}

A relevant face $\sigma$ of $\mc A(H)$ induces a well-defined total order $\preceq_\sigma$ on $G$ such that $g \preceq_\sigma g'$ if, and only if, $u(g) \leq u(g')$ for every $u \in \sigma$. Without loss of generality we assume that $\{ g_i \}_{i=1}^m$ is an $\preceq_\sigma$-increasing enumeration of $G$. Let $u,v \in \sigma$ be two relevant linear forms, and let $\ms L_u := \dR u_* \ms L$ and $\ms L_v := \dR v_* \ms L$. According to Section~\ref{subsec:qsw_conic}, one can construct two corresponding finite filtered cochain-complexes $i_u^{-1} L_u$ and $i_v^{-1} L_v$ indexed over $[m]$. Since the two linear forms $u$ and $v$ induce the same order $\preceq_\sigma$, the two filtered complexes $i_u^{-1} L_u$ and $i_v^{-1} L_v$ are isomorphic, therefore they have the same barcode, called the {\em barcode template} associated to~$\sigma$ and denoted by $\B_\sigma(\ms L)$. By Theorem \ref{thm_red_qsw}, this barcode template can be used to retrieve the projected barcode along any linear form in $\sigma$, a result that we rephrase below using our current notation: 

\begin{corollary}\label{cor:PBT_to_PB}
Let $u$ be a relevant linear form in a relevant face $\sigma$ of $\mc A(H)$, and let $k$ be an integer. The application $\pi_u : \B_\sigma^k(\ms L) \to \B^k(\dR u_* \ms L)$ given below is a bijection of multisets:
\[ \pi_u ([i,j)) := \begin{cases} (u(g_i),u(g_j)] \text{ if } j \neq m, \\ (u(g_i), + \infty ) \text{ if } j = m. \end{cases}\]

\end{corollary}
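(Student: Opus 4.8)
The statement is essentially a rephrasing of Theorem~\ref{thm_red_qsw} once we unwind what the notations $\ms L_u$, $i_u^{-1}L_u$, $\B_\sigma(\ms L)$ and $\pi_u$ stand for. The plan is to trace through the chain of equivalences established in Sections~\ref{subsec:direct_image} and~\ref{subsec:qsw_conic}, specialized to the linear form $u \in \sigma$, and check that $\pi_u$ coincides with the composite bijection $\alpha^{-1} i_! \colon \B^k(i_u^{-1}L_u) \to \B^k(\ms L_u)$ of Theorem~\ref{thm_red_qsw}, after the $\preceq_\sigma$-increasing relabelling of $G$.

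\textbf{Step 1.} First I would recall that, by Theorem~\ref{thm_proj_bar} (applied with the cone $\gamma = [0,+\infty)^n$ and the open conic-complex $\ms L = \Int(\gamma)\{G\}$), the derived pushforward $\ms L_u = \dR u_* \ms L$ is, up to the shift $[1-n]$, the finite $\Int(\Lambda)$-complex $\Int(\Lambda)\{u(G)\}$ on $\R$; since a shift only reindexes the graded pieces, it suffices to work with $\Int(\Lambda)\{u(G)\}$ and its graded barcode $\B^k$. Crucially, because $u$ is relevant and $\{g_i\}_{i=1}^m$ is a $\preceq_\sigma$-increasing enumeration of $G$, the images $u(g_1) \leq \dots \leq u(g_m)$ are already in increasing order, so the ordering map $i = i_u \colon [m] \to \R$ of Section~\ref{subsec:qsw_conic} is exactly $r \mapsto u(g_r)$ (with the convention $i(0) = u(g_1) - 1$). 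This identification makes the filtered cochain complex $i_u^{-1}L_u$ the one denoted $i^{-1}L$ in Section~\ref{subsec:qsw_conic}, and its barcode $\B^k(i_u^{-1}L_u)$ is exactly the source of the bijection in Theorem~\ref{thm_red_qsw}.

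\textbf{Step 2.} Next I would invoke Theorem~\ref{thm_red_qsw} directly: the map $\alpha^{-1}i_! \colon \B^k(i_u^{-1}L_u) \to \B^k(\ms L)$ is a well-defined bijection of multisets, where $\ms L$ here means the target $\Int(\Lambda)$-complex $\ms L_u$. It then remains to compute $\alpha^{-1}i_!$ on an interval $[i,j)$ of $[m]$ and verify it equals the formula for $\pi_u$. By the definition of $i_!$ on intervals given just before Theorem~\ref{thm_red_qsw}, for $j \neq m$ we have $i_!([i,j)) = [u(g_i), u(g_j))$, a half-open interval of $\R$ that is already locally closed; applying $\alpha^{-1}$ (which, following Proposition~\ref{prop_pullback_alpha}, sends $[a,b) = (-\infty,b) \cap [a,+\infty)$ to $(-\infty,b) \cap [a,+\infty)$, i.e. the interval whose closed/open endpoints swap according to the $\gamma$-topology, concretely $(a,b]$ here since $\gamma = \Lambda$ makes the $\gamma$-open sets the up-sets) yields $(u(g_i), u(g_j)]$. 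For $j = m$ we get $i_!([i,m)) = [u(g_i), +\infty)$, and $\alpha^{-1}$ turns this into $(u(g_i), +\infty)$. This matches the two cases in the definition of $\pi_u$, so $\pi_u = \alpha^{-1} i_!$ and the corollary follows.

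\textbf{Main obstacle.} The substantive point — and the one I would be most careful about — is Step~1: confirming that for a \emph{relevant} $u \in \sigma$ the order $\preceq_\sigma$ really does coincide with the order on $G$ induced by $u$ (so that the generic enumeration $g_1, \dots, g_m$ is automatically $u$-increasing), and hence that the abstract construction of Section~\ref{subsec:qsw_conic} applied to $\ms L_u$ literally produces $i_u^{-1}L_u$ with $i_u(r) = u(g_r)$. This is where the definition of a face $\sigma$ of the arrangement $\mc A(H)$ and of the associated total order $\preceq_\sigma$ must be used: on a single face, no hyperplane $H_{ij}$ is crossed, so all $u \in \sigma$ agree on the sign of $u(g_i) - u(g_j)$ for every pair, pinning down one common total preorder, which we have assumed (WLOG, all $H_{ij}$ distinct and $\sigma$ a face of full dimension so the preorder is a total order) to be a genuine total order. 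Once this bookkeeping is in place, everything else is a direct citation of Theorems~\ref{thm_proj_bar} and~\ref{thm_red_qsw}, so the "proof" is really an exercise in matching notation rather than new mathematics.
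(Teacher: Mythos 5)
Your proposal is correct and matches the paper's own treatment: the paper gives no separate proof of this corollary, presenting it exactly as Theorem~\ref{thm_red_qsw} rephrased in the notation of Section~\ref{sec:algo_proj_barcode}, which is precisely the notational matching you carry out (Theorem~\ref{thm_proj_bar} to pass to the $\Int(\Lambda)$-complex, the identification $i_u(r)=u(g_r)$ valid on the face $\sigma$ by definition of $\preceq_\sigma$, then the formula for $\alpha^{-1}i_!$). The only blemish is the typo in Step~2 where the image of $[a,b)=(-\infty,b)\cap[a,+\infty)$ under $\alpha^{-1}$ is first written as the same set before being correctly identified as $(a,b]$; this does not affect the argument.
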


The arrangement $\mc A(H)$, together with its point location data structure, and augmented with the collection of barcode templates $\B_\sigma(\ms L)$ associated to all the relevant faces $\sigma$ in the arrangement, forms what we call the \emph{projected barcode template} of $\ms L$, or $\PBT( \ms L)$ for short.

\bigskip

As in RIVET, once the arrangement~$\mc A(H)$ has been computed, the barcode templates $\overline{\B}_\sigma(\ms L)$ for all the relevant faces $\sigma\in \mc A(H)$ are computed via a walk through the dual graph of the arrangement---instead of a dfs or bfs, in order to maintain only a single boundary matrix. Following the approach of~\cite{LW15}, to each edge $[\sigma, \tau]$ in the dual graph we assign a cost that is the minimum number of elementary transpositions required to transition from $\preceq_\sigma$ to $\preceq_\tau$ (this number is given, e.g., by insertion sort). Then, we build a path in the graph that visits every vertex (possibly multiple times) and whose total cost is at most twice that of the optimal such path. To build our path we use the standard $2$-approximation scheme for the traveling salesman's problem, applying first Kruskal's minimum spanning tree algorithm to the graph, then Hierholzer's Eulerian cycle algorithm to the tree with doubled edges. Once our path is built, we walk along it, computing the barcode template of the first face from scratch using the standard boundary matrix reduction algorithm, then updating the barcode template by further reducing the boundary matrix using the vineyards algorithm~\cite{CEM06} at each transition between adjacent faces. As the number of elementary transpositions involved at each transition is in $\mr O(m^2)$, the algorithmic cost of each transition is in $\mr O(m^3)$: while this is no better than recomputing the barcode template from scratch at each faces in the worst case, in practice the number of elementary transpositions involved can be much smaller than $m^2$.

Overall, since the number of hyperplanes in~$H$ is in $\mr O(m^2)$, the size of the dual graph of the arrangement is in $\mr O(m^{2n-2})$, so the number of edges in our path is also in $\mr O(m^{2n-2})$, and so the total worst-case cost of computing the path then the barcode templates for all the relevant faces is in $\mr O(m^{2n-2} \mu^3)$. This cost is dominated by that of the construction of the arrangement and its point location data structure in Section~\ref{sec:arrangement}.

\subsection{Queries}\label{subsec:queries}

Given a relevant linear form $u$ whose $\gamma$-linear projected barcode we want to compute, we use the point location data structure from Section~\ref{sec:arrangement} to retrieve the face of the arrangement~$\mc A(H)$ that contains~$u$ in $\mr O (\frac{(n-1)^5}{\log (n-1)} \log m)$ time. Then, two distinct cases can arise :
\begin{enumerate}
    \item the face containing $u$ denoted~$\sigma$ is a (necessarily relevant) face;
    \item the face containing $u$ denoted~$\tau$ has not maximal (affine) dimension.
\end{enumerate}
In case~1, we simply apply Corollary~\ref{cor:PBT_to_PB} to get the projected barcode of~$u$ from the barcode template associated to~$\sigma$ in $\mr O(\mu n)$ time as $\mu$ provides an upper bound on the number of bars and $n$ is the number of coordinates of each generator that $\pi_u$ maps through~$u$. In case~2, we do roughly the same thing, except that now $u$ lies on the boundary of some relevant face~$\sigma$ incident to~$\tau$, so Corollary~\ref{cor:PBT_to_PB} must be replaced by the following barcode continuity result, which is a direct consequence of the fact that $\tau$ induces a pre-order on~$[m]$ that is obtained by quotienting the order induced by~$\sigma$ by the fibers of~$u$ : 
\begin{corollary}\label{cor:continuity_barcode}
Let $u$ be a relevant linear form belonging to a face~$\tau$ of~$\mc A(H)$ that is not of maximal (affine) dimension, and let $\sigma$ be an incident relevant face. For any integer $k$, the application $\pi_u : \{ [i,j) \in \B_\tau^k(\ms L) \mid u(g_i) \neq u(g_j) \} \to \B^k(\dR u_* \ms L)$ given below is a bijection of multisets:
\[ \pi_u ([i,j)) := \begin{cases} (u(g_i),u(g_j)] \text{ if } j \neq m \\ (u(g_i), + \infty ) \text{ if } j = m \end{cases} \]
\end{corollary}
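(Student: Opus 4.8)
\textbf{Proof plan for Corollary~\ref{cor:continuity_barcode}.}

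The plan is to reduce this statement to Corollary~\ref{cor:PBT_to_PB} applied to any relevant linear form $w$ in the interior of $\sigma$, combined with a continuity/limit argument as $w\to u$ within $\sigma$. First I would fix the relevant face $\sigma$ incident to $\tau$ and recall that $\sigma$ induces a total order $\preceq_\sigma$ on $G$ refining the pre-order induced by $\tau$; by hypothesis the pre-order attached to $\tau$ is exactly the quotient of $\preceq_\sigma$ by the fibers of $u$, i.e.\ $g_i$ and $g_j$ are identified in the $\tau$-pre-order precisely when $u(g_i)=u(g_j)$. In particular $\B_\tau^k(\ms L)$ is well-defined (the barcode template attached to $\tau$ makes sense because any two relevant forms in $\tau$ induce the same pre-order, hence isomorphic filtered cochain complexes after the construction of Section~\ref{subsec:qsw_conic}), and the assignment $[i,j)\mapsto \pi_u([i,j))$ is well-defined on the sub-multiset where $u(g_i)\neq u(g_j)$.

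Next I would make the comparison between the $\tau$-template and the $\sigma$-template explicit. Since $\preceq_\sigma$ refines the $\tau$-pre-order, the boundary matrix used to compute $\B_\sigma(\ms L)$ and the one used to compute $\B_\tau(\ms L)$ are literally the same matrix (the differentials of $\ms L$, whose matrices do not depend on the linear form, per the discussion preceding Section~\ref{sec:arrangement}); only the indexing order of rows/columns changes, and the $\tau$-order is a coarsening in which the blocks of indices with equal $u$-value are grouped. Running the persistence algorithm on $[i^{-1}_wL_w]_{\mathrm{sw}}$ for $w$ in the interior of $\sigma$ produces, via Corollary~\ref{cor:PBT_to_PB}, the bijection $\pi_w:\B_\sigma^k(\ms L)\to\B^k(\dR w_*\ms L)$ with $\pi_w([i,j))=(w(g_i),w(g_j)]$ (or $(w(g_i),+\infty)$ when $j=m$). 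Now let $w\to u$ along a path in the interior of $\sigma$. Because $\dR u_*\ms L$ is the pushforward along a linear form whose effect on generators is simply $g\mapsto u(g)$, the endpoints $w(g_i)$ converge to $u(g_i)$, and a bar $(w(g_i),w(g_j)]$ collapses to a point (hence disappears from $\B^k(\dR u_*\ms L)$) exactly when $u(g_i)=u(g_j)$, while all other bars survive with endpoints $u(g_i),u(g_j)$. This is precisely the statement that the surviving bars are indexed by $\{[i,j)\in\B_\sigma^k(\ms L)\mid u(g_i)\neq u(g_j)\}$, and since $\preceq_\sigma$ refines the $\tau$-pre-order, the pairs $[i,j)$ in $\B_\sigma^k$ with $u(g_i)\neq u(g_j)$ are in canonical bijection with the pairs in $\B_\tau^k(\ms L)$ with $u(g_i)\neq u(g_j)$.

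The cleanest way to turn the informal ``$w\to u$'' limit into a rigorous argument is to invoke the continuity of the $\gamma$-linear projected barcode as a function of $u$ — this is one of the stability properties established in~\cite{BP23} for $\ms P^\gamma$ — or, alternatively, to argue purely combinatorially: the matrix reduction $\partial=RU$ underlying the persistence pairing depends only on the index order, which is fixed throughout $\overline\sigma$, so the persistence pairs $(i,j)$ computed from $\B_\sigma$ are the \emph{same} for $u\in\tau$ as for $w\in\mathrm{int}(\sigma)$; one then only needs to check that $\pi_u$ sends a persistence pair $(i,j)$ to a genuine (nonempty) interval iff $u(g_i)\neq u(g_j)$, and to an empty interval otherwise, which is immediate from the formula. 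I expect the main obstacle to be bookkeeping rather than conceptual: one must verify carefully that the $\tau$-template $\B_\tau^k(\ms L)$ — defined using \emph{some} relevant form in $\tau$, via the simplex-wise reduction of Section~\ref{sec_qsimplexwise} in which ties in $u$-value are broken arbitrarily within the infinitesimal intervals $T_i$ — agrees (after the map $[i,j)\mapsto([i,j)$ restricted to non-tied pairs) with the restriction of the $\sigma$-template, independently of the arbitrary tie-breaking; this is where the hypothesis that $\tau$'s pre-order is the quotient of $\sigma$'s order by the fibers of $u$ is used in an essential way, and it is exactly the kind of ``restriction preserves finite direct sums'' argument already used to prove Proposition~\ref{prop_finite_simplexwise_barcode}.
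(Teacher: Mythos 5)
Your combinatorial route --- the boundary matrix and the index order are the same on all of $\overline\sigma$, hence the persistence pairs coincide for $u\in\tau$ and $w\in\mathrm{int}(\sigma)$, and the pairs with $u(g_i)=u(g_j)$ are exactly the collapsed bars discarded by Proposition~\ref{prop_finite_simplexwise_barcode} and Theorem~\ref{thm_red_qsw} --- is precisely the paper's justification, which it compresses into the remark that $\tau$ induces the pre-order obtained by quotienting $\preceq_\sigma$ by the fibers of~$u$. The preliminary limit/stability sketch is dispensable (and would need more care to be rigorous on its own), but since you ultimately settle on the combinatorial argument, your proposal is correct and takes essentially the same approach as the paper.
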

\noindent Overall, the $\gamma$-linear projected barcode of~$u$ is computed in $\mr O(\mu n + \frac{(n-1)^5}{\log (n-1)} \log m)$ time.
%
%
%
\subsection{The special case of 2-parameter persistence modules} \label{sec:algo_2-params}
%
%

In the setting of 2-parameter persistence, our arrangement~$\mc A(H)$ is 1-dimensional and has a particular structure, which simplifies the approach significantly. 
Observe that $H_{ij}$ intersects $S$ if, and only if, $g_{ij} := (a_{ij},b_{ij}) \notin \Int(\gamma) \cup \Int(\gamma^a)$, i.e.,  $a_{ij}b_{ij}<0$. In this case, we have $H = \{ (\frac{|b_{ij}|}{|a_{ij}|+|b_{ij}|},\frac{|a_{ij}|}{|a_{ij}|+|b_{ij}|}) \in \R^2 \mid a_{ij}b_{ij} < 0 \}$, and via the identification of $T$ with $\R$, we get that $H = \{ \frac{|a_{ij}|}{|a_{ij}|+|b_{ij}|} \in \R \mid a_{ij}b_{ij} < 0 \}$. The point location of a given query linear form can then be solved in $\mr O(\log m)$ time by a simple binary search.
In pre-processing, we compute the $O(m^2)$ hyperplanes~$H_{ij}$ and the boundary points of the faces in the induced 1-dimensional arrangement~$\mc A(H)$ in $O(m^2)$~time. Then, we compute the barcode templates as in Section~\ref{sec:proj_barcode_template} in $O(m^2 \mu^3)$ time. This worst-case bound is comparable to that of RIVET, however the computation and storage of our arrangement are much simpler. 
\section{Example}\label{sec:example}

In this section we provide a running example of the procedure developed in the present paper. The goal is here to compute of the projected barcode template of the following persistence module, denoted by $M$.

\begin{figure}[ht]
    \centering
    \includegraphics[scale = 1]{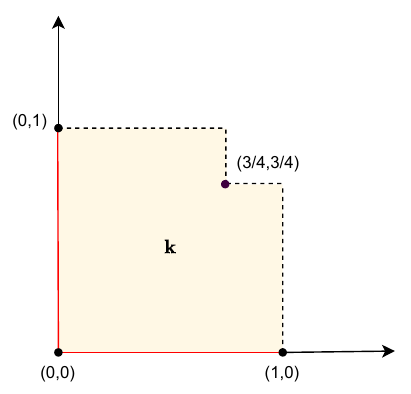}
    \caption{The persistence module $M$}
\end{figure}
More precisely, we set
\begin{align*}
g_0=(0,0) && g_1=(1,0) && g_2=(0,1)\\ 
g_3=(\frac{3}{4},\frac{3}{4}) && g_4=(\frac{3}{4},1) && g_5=(1,\frac{3}{4}). 
\end{align*}
and define the module $M$ as follows :
\[ M(x,y) := 
\begin{cases}
\k \text{ if } 0 \leq x \leq 3/4 \text{ and } 0 \leq y \leq 1,\\
\k \text{ if } 0 \leq x \leq 1 \text{ and } 0 \leq y \leq 3/4,\\
0  \text{ otherwise}.
\end{cases}
\]
with identity as morphisms between non $0$ vector spaces. Moreover, $M$ admits the following free resolution:
\begin{align*}
   \k^{\upp(g_4)} \oplus \k^{\upp(g_5)} \stackrel{d_M^{-2}}{\longrightarrow} \k^{\upp(g_1)} \oplus \k^{\upp(g_2)} \oplus \k^{\upp(g_3)} \stackrel{d_M^{-1}}{\longrightarrow} \k^{\upp(g_0)} 
\end{align*}
 with $d_M^{-2}(s_4,s_5)=(s_4,-s_4-s_5,s_5)$ and $d_M^{-1}(s_1,s_2,s_3)=s_1+s_2+s_3$.
The conic complex $\Int(\gamma)\{J\}$ associated with the above free resolution is :
\begin{align*}
   \ms U \colon \k_{g_4+\Int(\gamma)} \oplus \k_{g_5+\Int(\gamma)} \stackrel{d_{\ms{U}}^{-2}}{\longrightarrow} \k_{g_1+\Int(\gamma)} \oplus \k_{g_2+\Int(\gamma)} \oplus \k_{g_3+\Int(\gamma)} \stackrel{d_{\ms{U}}^{-1}}{\longrightarrow} \k_{g_0+\Int(\gamma)}
\end{align*}
 with $d_{\ms{U}}^{-2}(s_4,s_5)=(s_4,-s_4-s_5,s_5)$ and $d_{\ms{U}}^{-1}(s_1,s_2,s_3)=s_1+s_2+s_3$ and the multiset of generators $J=\{g_0,g_1,g_2,g_3,g_4,g_5\}$. Note that the non-zero terms of this complex are in cohomological degrees $-2,-1$ and $0$.

We now compute the $g_{ij}=g_i-g_j$ with $i<j$. We obtain
\begin{equation*}
\begin{matrix}
g_{01}=(-1,0)                      &g_{12}=(1,-1)                    & g_{23}=(-\frac{3}{4},\frac{1}{4})& g_{34}=(0,-\frac{1}{4})& g_{45}=(-\frac{1}{4},\frac{1}{4})\\[3ex]
g_{02}=(0,-1)                      &g_{13}=(\frac{1}{4},-\frac{3}{4})& g_{24}=(-\frac{3}{4},0)          & g_{35}=(-\frac{1}{4},0)&\\[3ex]
g_{03}=(-\frac{3}{4},-\frac{3}{4})&g_{14}=(\frac{1}{4},-1)           & g_{25}=(-1,\frac{1}{4})          &                        &\\[3ex]
g_{04}=(-\frac{3}{4},-1)           &g_{15}=(0,-\frac{3}{4})          &                                  &                        &\\[3ex]
g_{05}=(-1,-\frac{3}{4})           &                                 &                                  &                        &
\end{matrix}
\end{equation*}

We only keep the $g_{ij}=(a_{ij},b_{ij})$ such that $a_{ij} b_{ij} < 0$, that is: $g_{12}$, $g_{13}$, $g_{14}$, $g_{23}$, $g_{25}$, and finally, $g_{45}$.

Identifying the set $T=\{(x_1,x_2) \mid x_1+x_2=1 \}$ with $\R$ via the projection $(x_1,x_2) \mapsto x_2$, we get that
\begin{equation*}
    H=\{  \frac{1}{5}; \frac{1}{4}; \frac{1}{2};    \frac{3}{4};  \frac{4}{5} \}.
\end{equation*}
In this setting, we need to compute the barcode template corresponding to the linear forms $u \colon \R^2 \to \R$ defined by $(x_1,x_2) \mapsto a x_1 + b x_2$ with $a> 0$, $b > 0$ and $a+b=1$. We first reduce the problem to computing the barcode of $\dR u_\ast \ms U$. For that purpose, we apply Theorem \ref{thm_proj_bar} and obtain the finite open $\Int(\Lambda)$-complex $\ms{F}=\Int(\Lambda)\{u(J)\}[-1]$ on $\R$ where
\begin{align*}
    \ms{F}^{-1}&=\k_{u(g_4)+\Int(\Lambda)} \oplus \k_{u(g_5)+\Int(\Lambda)}\\ \ms{F}^{0}&= \k_{u(g_1)+\Int(\Lambda)} \oplus \k_{u(g_2)+\Int(\Lambda)} \oplus \k_{u(g_3)+\Int(\Lambda)} \\
    \ms{F}^{1}&= \k_{u(g_0)+\Int(\Lambda)}
\end{align*}
and where the maps~$d_{\ms{F}}^{-1} \colon \ms{F}^{-1} \to \ms{F}^{0}$ and~$d_{\ms{F}}^{0} \colon \ms{F}^{0} \to \ms{F}^{1}$ have the same matrix representations as $d_{M}^{-2}$ and $d_{M}^{-1}$ respectively. We then, compute the $u(g_i)$ and get
\begin{equation*}
\begin{array}{ccccc}
u(g_0)=0 && u(g_1)= a = 1-b && u(g_2)=b \vspace{0.5cm} \\
u(g_3)=\frac{3}{4} && u(g_4)=\frac{3}{4}+ \frac{1}{4}b && u(g_5)=1- \frac{1}{4}b
\end{array}
\end{equation*}
From now on, we assume that $\k=\Z_2$. We first compute the barcode of the face $\sigma$ corresponding to the case where $b$ belongs to the interval $(\frac{1}{5},\frac{1}{4})$. This defines the following total order on the generators
\begin{equation}\label{ord:ord1}
    g_0 \prec_\sigma  g_2 \prec_\sigma g_1 \prec_\sigma g_3 \prec_\sigma g_4 \prec_\sigma g_5.
\end{equation}

Following Section \ref{subsec:qsw_conic}, the corresponding finite filtration is defined by
\begin{align*}
C(0)&=0,\\
C(i)&= \Gamma \Bigl( u(g_{i-1})+\Int(\Lambda); \, \Int(\Lambda)\{u(J)\}\Bigr)[-1],
\end{align*}
where $f_{i-1} \colon C(i-1) \to C(i)$ is induced by the restriction maps of the sheaf $\Int(\Lambda)\{u(J)\}[-1]$. One checks that this finite filtration is simplexwise.

We use the persistence algorithm to compute the barcode and refer to \cite[\textsection 3.3.1]{DW22} for the details of this algorithm. We first construct the filtered boundary matrix \eqref{mat:bnd1} associated with the above finite filtration. It is obtained by combining the differentials of the finite filtration $C(\bullet)$ which are completely determined by their values on the generators of the conic complex $\Int(\Lambda)\{u(J)\}[-1]$ (These differentials have the same matrix representation as $d^{-2}_M$ and $d^{-1}_M$). The entries of the filtered boundary matrix are ordered according to the total order \eqref{ord:ord1} on the generators. This yields the following:
\begin{equation} \label{mat:bnd1}
     \bordermatrix{ & u(g_2) & u(g_1) & u(g_3) & u(g_4) & u(g_5) \cr
       u(g_0) & 1 & 1 & 1 & 0 & 0 \cr
       u(g_2) & 0 & 0 & 0 &1 & 1 \cr
       u(g_1) & 0 & 0 & 0 & 1 & 0 \cr
       u(g_3) & 0 & 0 & 0 & 0 & 1} \qquad
\end{equation}
Recall that, that the boundary matrix $B$ of the cochain complex of persistence modules that is associated to a conic complex $\ms G = \Int(\Lambda)\{J\}$ is such that $B_{i,j} = 1$ if, and only if, there exists $k \in \Z$ such that $j \in J_k$ and $i \in J_{k+1}$ with $d^{k}_\ms G|_{\k_{(i,+\infty)}} \neq 0$.
Applying the reduction process, we get the matrix:
\begin{equation*}
     \bordermatrix{ & u(g_2) & u(g_1) & u(g_3) & u(g_4) & u(g_5) \cr
       u(g_0) & 1 & 0 & 0 & 0 & 0 \cr
       u(g_2) & 0 & 0 & 0 &1 & 1 \cr
       u(g_1) & 0 & 0 & 0 & 1 & 0 \cr
       u(g_3) & 0 & 0 & 0 & 0 & 1}\qquad 
\end{equation*}
This implies that the projected barcode functor (see Equation \eqref{map:projbar}) evaluated on the pair $(u,\ms U)$ with $u$ as above is given by
\begin{equation*}
    \ms P^\gamma (u, \ms U) \simeq \k_{(u(g_0);u(g_2)]} \oplus \k_{(u(g_1);u(g_4)]}[-1] \oplus \k_{(u(g_3);u(g_5)]}[-1].
\end{equation*}

Second, we compute the barcode of the case $b=\frac{1}{4}$. We compute the $u(g_i)$ and get
\begin{equation*}
\begin{array}{ccccc}
u(g_0)=0 && u(g_1)= \frac{3}{4} && u(g_2)=\frac{1}{4} \vspace{0.5cm} \\
u(g_3)=\frac{3}{4} && u(g_4)=\frac{13}{16} && u(g_5)=\frac{15}{16}
\end{array}
\end{equation*}
This defines the following total pre-order on the generators:
\begin{align*}
u(g_0)  < u(g_2) <   u(g_1)=u(g_3) < u(g_4) < u(g_5)
\end{align*}
In this setting, we compute the barcode corresponding to the linear form 
\[
u \colon \R^2 \to \R \textnormal{ defined by } (x_1,x_2) \mapsto \frac{3}{4} x_1 + \frac{1}{4} x_2.
\]
We apply Theorem \ref{thm_proj_bar} and obtain the complex $\Int(\Lambda)\{u(J)\}[-1]$ on $\R$
\begin{align*}
    \k_{\frac{13}{16}+\Int(\Lambda)} \oplus \k_{\frac{15}{16}+\Int(\Lambda)} \longrightarrow \k_{\frac{3}{4}+\Int(\Lambda)} \oplus \k_{\frac{1}{4}+\Int(\Lambda)} \oplus \k_{\frac{3}{4}+\Int(\Lambda)} \longrightarrow\k_{\Int(\Lambda)},
\end{align*}
the non-zero terms of which are in cohomological degrees $-1$, 0 and 1. Again, following Section \ref{subsec:qsw_conic}, we obtain the corresponding finite filtration defined by
\begin{align*}
C(0)&=0,\\
C(i)&= \Gamma \Bigl( u(g_{i-1})+\Int(\Lambda); \, \Int(\Lambda)\{u(J)\}\Bigr)[-1].
\end{align*}
This filtration is not simplex-wise, so we now construct a simplex-wise finite filtration applying Section \ref{sec_qsimplexwise} and use the indexing convention described ibid. We index the intermediate complex introduced to construct the simplex-wise filtration $\lbrack C(\bullet)\rbrack_{\mathrm{sw}}$ by choosing the middle of the open interval~$(3, 4)$. This yields the following:  

\begin{equation} \label{filt:simplexwise}
    \begin{tikzcd}[row sep=0.3em]
        \!\!\!\lbrack C(0)\rbrack_{\mathrm{sw}}=  0 &                        &\\
        \lbrack C(1)\rbrack_{\mathrm{sw}}=  \k_0&                        & \\
        \lbrack C(2)\rbrack_{\mathrm{sw}}= \k_0 & \ar[l] \k_2                    &\\
        \lbrack C(3)\rbrack_{\mathrm{sw}}= \k_0 & \ar[l] \k_1 \oplus \k_2           &\\
        \lbrack C(\frac{7}{2})\rbrack_{\mathrm{sw}}= \k_0 & \ar[l] \k_1 \oplus \k_2 \oplus \k_3 &\\
        \lbrack C(4)\rbrack_{\mathrm{sw}}= \k_0 & \ar[l] \k_1 \oplus \k_2 \oplus \k_3 & \ar[l] \k_4\\
        \lbrack C(5)\rbrack_{\mathrm{sw}}= \k_0 & \ar[l] \k_1 \oplus \k_2 \oplus \k_3 & \ar[l] \k_4 \oplus \k_5
    \end{tikzcd}
\end{equation}
where $\k_i$ corresponds to the copy of the field $\k$ associated with the generator $g_i$.

We  construct the filtered boundary matrix \eqref{mat:bnd2} associated with the simplexwise finite filtration \eqref{filt:simplexwise}. It is obtained by combining the differentials

It is obtained by combining the differentials of the finite filtration \eqref{filt:simplexwise} which are completely determined by their values on the generators of the conic complex $\Int(\Lambda)\{u(J)\}[-1]$ (these differentials have the same matrix representation as $d^{-2}_M$ and $d^{-1}_M$). The entries of the filtered boundary matrix are ordered according to the order provided by the simplex-wise filtration $\lbrack C(\bullet)\rbrack_{\mathrm{sw}}$ obtained via procedure described in Section \ref{sec_qsimplexwise}.

We have the following boundary matrix:
\begin{equation} \label{mat:bnd2}
     \bordermatrix{ & 2 & 3 & \frac{7}{2} & 4 & 5 \cr
       1 & 1 & 1 & 1 & 0 & 0 \cr
       2 & 0 & 0 & 0 &1 & 1 \cr
       3 & 0 & 0 & 0 & 1 & 0 \cr
       \frac{7}{2} & 0 & 0 & 0 & 0 & 1} \qquad
\end{equation}
Applying the reduction process, we get once again the matrix:
\begin{equation*}
     \bordermatrix{ & 2 & 3 & \frac{7}{2} & 4 & 5 \cr
       1 & 1 & 0 & 0 & 0 & 0 \cr
       2 & 0 & 0 & 0 &1 & 1 \cr
       3 & 0 & 0 & 0 & 1 & 0 \cr
       \frac{7}{2} & 0 & 0 & 0 & 0 & 1}\qquad 
\end{equation*}
Hence, we get the following barcode:
\begin{equation*}
    \B'=\{ [1,2); \; [3,4); \; [ \frac{7}{2}, 5) \}
\end{equation*}
Applying Proposition \ref{prop_finite_simplexwise_barcode} yields:
\begin{equation*}
    \B=\{ [1,2); \; [3,4); \; [3, 5) \}
\end{equation*}
Finally, applying Theorem \ref{thm_red_qsw}, we get that the projected barcode is
\begin{equation*}
    \ms P^\gamma (u, \ms U) \simeq \k_{(u(g_0);u(g_2)]} \oplus \k_{(u(g_1);u(g_4)]}[-1] \oplus \k_{(u(g_1);u(g_5)]}[-1].
\end{equation*}

The barcode templates for the faces of $H$ are computed in a similar manner to the face $\sigma$ case, where $ b \in \left(\frac{1}{5}, \frac{1}{4}\right)$. For the zero-dimensional faces determined by an element of $ H $, the computation is similar to the case $ b = \frac{1}{4} $. Therefore, these calculations are left to the reader.

\section{Experiments}
\label{sec:experiments}

\subsection{Setup}

The algorithm has been implemented in Python3 in the 2-parameter setting (Section \ref{sec:algo_2-params}), and can compute projected barcodes for any scc2020 formated free resolutions \cite{KL21}. These resolutions can for instance be computed with 2pac~\cite{BLL23}. Barcode computations are done using the PHAT library~\cite{bauer2014phat}, based on the {\em twist} algorithm. The optimization using vineyards \cite{CEM06} has not been implemented.

We ran our code on a Dell Precision 3460 with an 13th Gen Intel Core i9-13900 x 32 chip and 64~GB of RAM. The code was run with Python 3.10.12. Computation times were measured with \emph{time} from the \emph{time} module.

\subsection{Results}

Our results were obtained on a sample of 30 points sampled uniformly on the right half of a circle of radius one, 40 points sampled uniformly on a circle of unit radius plus one outlier. Calling $P$ the resulting point sample with 71 points, we performed a density-Rips bifiltration with density given by the function $x \mapsto | \{ y  \in P \mid |x-y| < 1.5 \} |$ (see Figure \ref{fig:halfcircle}).

\begin{figure}[ht]
    \centering
    \includegraphics[scale = 0.5]{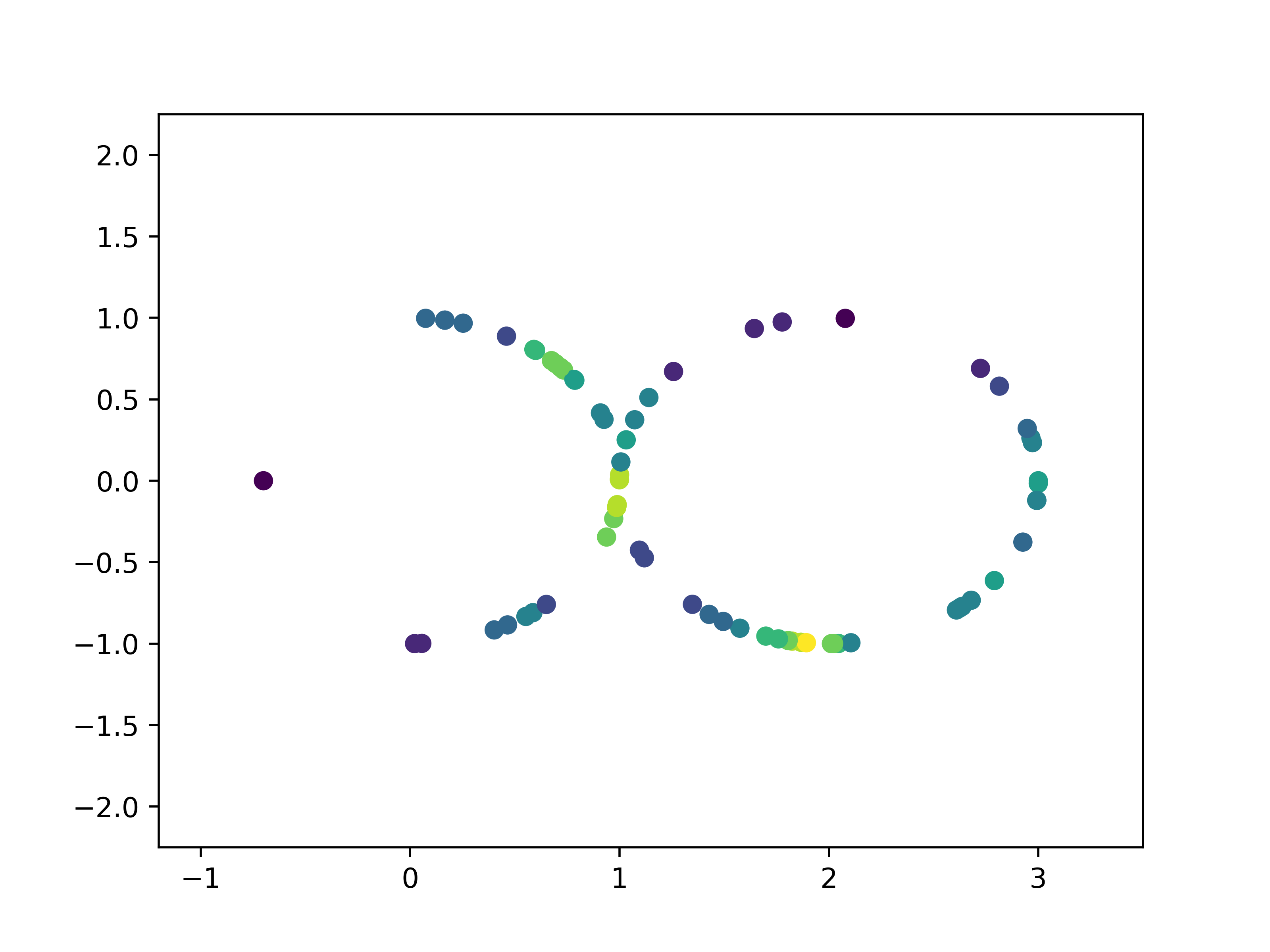}
    \caption{The point sample $P$.}
    \label{fig:halfcircle}
\end{figure}
 
Figure~\ref{fig:res} shows the  projected barcode of the simplicial homology in degree~1 of the function-Rips bifiltration, queried along the linear forms  $(x,y) \mapsto 0.875x + 0.125y$ and $(x,y) \mapsto 1x$. As intuition suggests, the first linear form, which takes the density into account, allows one to recover the generators of the circle in degree~1 homology while minimising the effect of the outlier; meanwhile the second linear form, which corresponds to the Rips filtration, shows two generators due to the outlier's position.\\

\begin{figure}[htb]
\centering
\includegraphics[scale = 0.3]{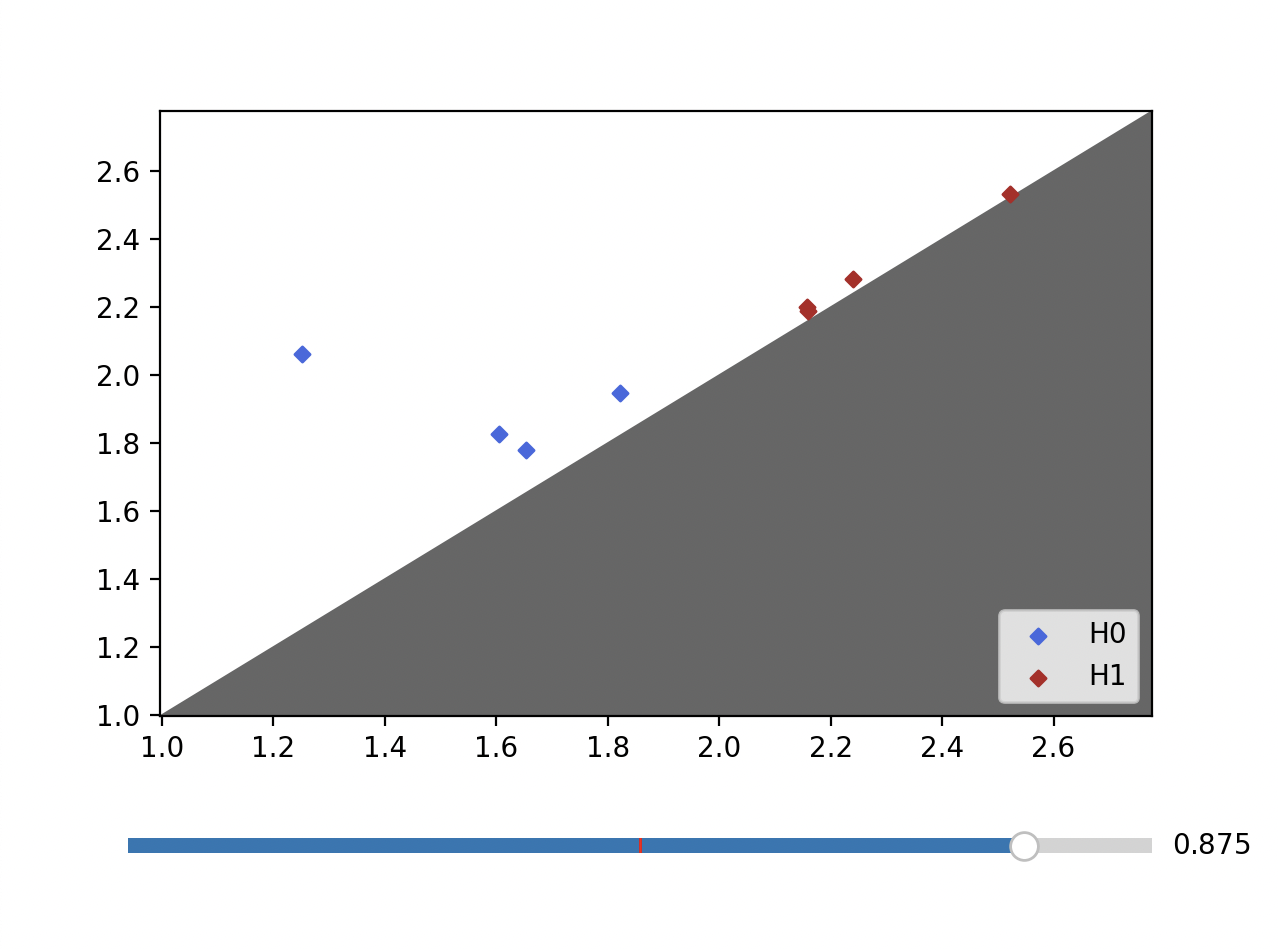}
\includegraphics[scale = 0.3]{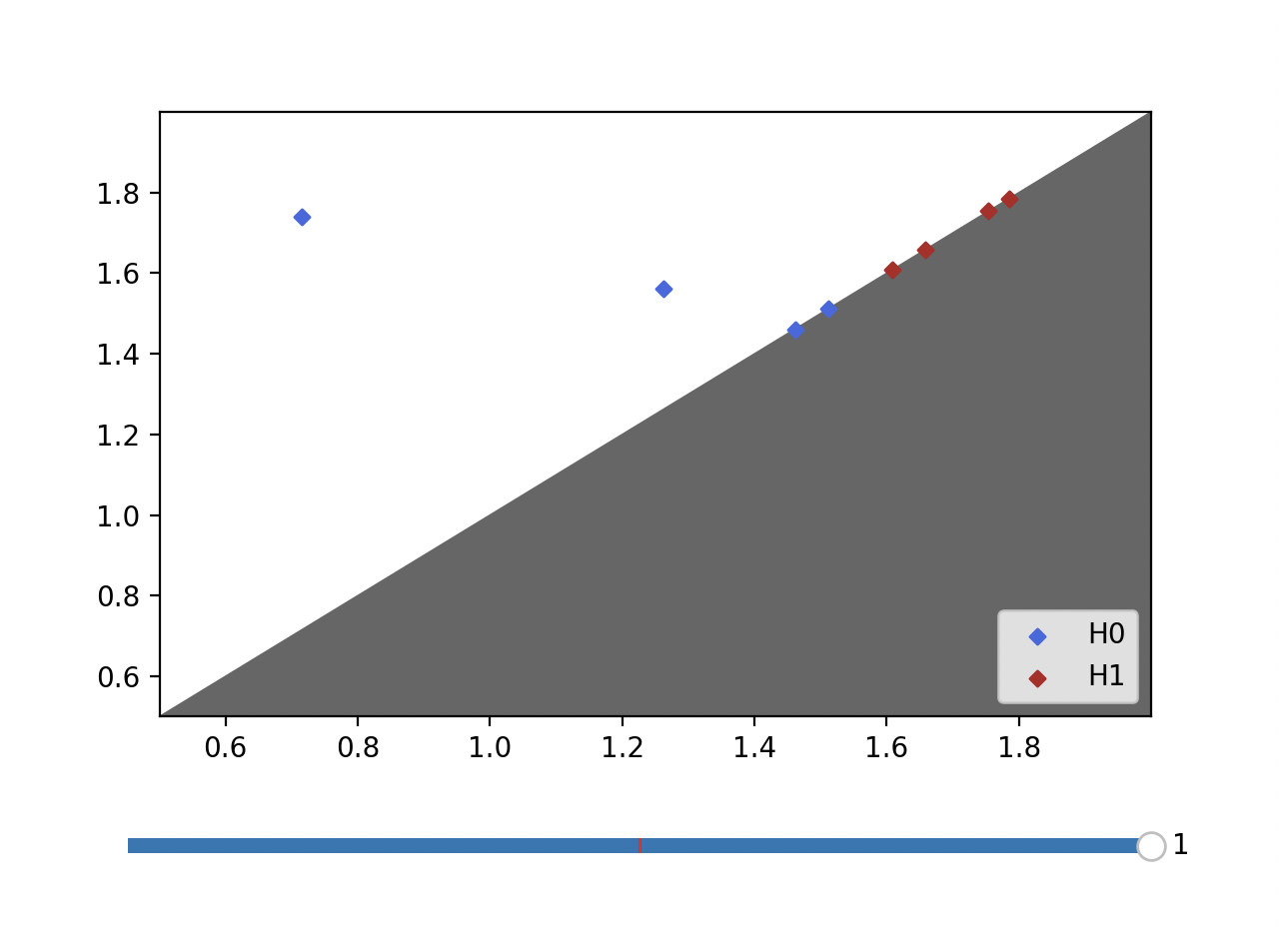}
\caption{The projected barcodes of the function-Rips bifiltration obtained by pushforwards along the linear forms $(x,y) \mapsto 0.875x+0.125y$ (left) and $(x,y) \mapsto 1x$ (right). Blue points represent persistence pairs with creator in degree 0 in the resolution of the first simplicial homology group, while red points represent persistence pairs with creator in degree 1 in the same resolution.}
\label{fig:res}
\end{figure}

Table \ref{tab:perf} reports the computation times of the projected barcode template, and of queries along a linear form.

\begin{table}[ht]
\begin{tabular}{|p{2.5cm}||>{\raggedleft\arraybackslash}p{2cm}|>{\raggedleft\arraybackslash}p{1.7cm}|>{\raggedleft\arraybackslash}p{2cm}|>{\raggedleft\arraybackslash}p{2cm}|>{\raggedleft\arraybackslash}p{2cm}|}
 \hline
 \multicolumn{6}{|c|}{Performances} \\
 \hline
 \raggedleft Sample name & Number of generators & Number of faces & Query with \quad template & Query without template & Projected Barcode Template\\
 \hline
 Circle $\H_0$ &  332 &  16602 &  0.341 ms &  0.978 ms & { 11.068 s} \\
 Circle $\H_1$ &  72 &  1443 &  0.093 ms &  0.175 ms & {0.422 s}\\
 \hline
 Torus $\H_0$ &   1516 &  284346 &  1.904 ms &  3.567 ms & { 1072.477 s}\\
 Torus $\H_1$ &   782 &  163034 &  0.810 ms &  1.904 ms & { 308.611 s}\\
 Torus $\H_2$ &  96 &  1791 &  0.102 ms  &  0.237 ms & { 0.355 s}\\
 \hline
 Octogone $\H_0$ &  39 &  106 &  0.966 ms &  0.120 ms & { 6.699 s}\\
 Octogone $\H_1$ &  8 &  7 &  0.053 ms  &  0.112 ms & { 1.571 s}\\
 \hline
 Dragon $\H_0$ &  1082 &  211000 &  1.077 ms &  2.592 ms & { 502.068 s} \\
 Dragon $\H_1$ &  824 &  219327 &  0.818 ms &  1.963 ms & { 467.449 s} \\
 Dragon $\H_2$ &  120 &  4928 &  0.172 ms &  0.289 ms & { 1.132 s }\\
\hline
\end{tabular}
\caption{The sample set is composed of : 100 points sampled on a circle, 500 points sampled on a torus both with a vertical $\chi^2$ offset perturbation and 300 sampled on the Stanford Dragon available in \cite{OPTGH17}; these where analysed using a density Rips filtration with density estimated by the number of neighbours within a ball with specified radius. We also included a multicritical example with a degree-Rips bilfitration on an octogone with two outliers.}
\label{tab:perf}
\end{table}



\clearpage

\bibliographystyle{plainurl}
\bibliography{bibliography}

\appendix


\section{Technical supplement}

\label{sec:appendix_sec4}


\subsection{Derived functors between derived categories}

Let $\mc C, \mc D$ be two abelian categories, and let $F : \mc C \to \mc D$ be an additive functor. The derived category $\D^+(\mc C)$ of $\mc C$ is the localization of the category of cochain complexes $\Ch^+(\mc C)$ with respect to the quasi-isomorphisms. Under suitable conditions, the functor $F$ induces a derived functor $\dR F \colon \D^+(\mc C) \to \D^+(\mc D)$. We refer the reader to \cite[Chapter I]{KS90} and especially to sections 1.7 and 1.8 therein for details. The aim of this section is to briefly recall a method for the evaluation of a derived functor on an object. This technique is well-known and implicit in \textit{ibid}.

\noindent A full additive subcategory $\mc{J}$ of $\mc{C}$ is called \emph{$F$-injective} if the three following conditions hold :
\begin{itemize}
\item for any object $X$ of $\mc C$, there is a map $X \to J$, with $J$ an object of $\mc J$, such that $0 \to X \to J$ is exact;
\item the category $\mc J$ is closed under taking cokernels;
\item the functor $F$ sends short exact sequences in $\mc J$ to short exact sequences.
\end{itemize}
Assume that $F$ is left-exact and that $\mc C$ has enough injectives. An object $X$ of $\mc C$ is \emph{$F$-acyclic} if $\dR^nF(X) = 0$ for $n \neq 0$. The full subcategory of $F$-acyclic objects is $F$-injective \cite[exercice I.19]{KS90} and it contains the injective objects. Then, the next folklore lemma follows from \cite[Proposition 1.8.3]{KS90}:

\begin{lemma}
\label{lem_acyclic}
Let $F : \mc C \to \mc D$ be a left exact functor between abelian categories. Suppose that $\mc C$ has enough injectives, and let $X$ be an object of $\Ch^+(\mc C)$.
If for every integer $n$ the cochain complex $X^n$ (concentrated in degree 0) is $F$-acyclic, then, in $\D^+(\mc C)$, $\dR F (X)$ is isomorphic to
\[\begin{tikzcd}
	\cdots & F(X^{-1}) &  F(X^0) & F(X^1) & \cdots
	\arrow["F(d^0)", from=1-3, to=1-4]
	\arrow["F(d^1)", from=1-4, to=1-5]
	\arrow["F(d^{-1})", from=1-2, to=1-3]
	\arrow["F(d^{-2})", from=1-1, to=1-2]
\end{tikzcd}\]
\end{lemma}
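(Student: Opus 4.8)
The plan is to recognize this as a direct instance of the acyclic-resolution machinery for derived functors developed in \cite[Chapter I]{KS90}, so the proof is essentially a citation assembly once the hypotheses are matched.

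First I would let $\mc J \subseteq \mc C$ denote the full additive subcategory whose objects are the $F$-acyclic objects of $\mc C$. By the discussion immediately preceding the lemma, the hypotheses that $F$ is left exact and that $\mc C$ has enough injectives guarantee (via \cite[exercice I.19]{KS90}) that $\mc J$ is $F$-injective: every object of $\mc C$ embeds into an injective, hence $F$-acyclic, object; $\mc J$ is closed under cokernels; and $F$ sends short exact sequences with terms in $\mc J$ to short exact sequences. In particular $\mc J$ contains enough objects to compute $\dR F$ on bounded-below complexes.

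Next I would observe that the hypothesis that each $X^n$ is $F$-acyclic says precisely that $X$, viewed as an object of $\Ch^+(\mc C)$, already lies in $\Ch^+(\mc J)$. Then I would invoke \cite[Proposition 1.8.3]{KS90}: since $\mc J$ is $F$-injective, the functor $\dR F \colon \D^+(\mc C) \to \D^+(\mc D)$ is well defined, and for any $Y \in \Ch^+(\mc J)$ there is a natural isomorphism $\dR F(Y) \simeq F(Y)$ in $\D^+(\mc D)$. Applying this with $Y = X$ yields $\dR F(X) \simeq F(X)$, and $F(X)$ is by definition the complex $\cdots \to F(X^{-1}) \to F(X^0) \to F(X^1) \to \cdots$ with differentials $F(d^i)$, which is exactly the asserted representative.

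There is no real obstacle here: the entire content is contained in the cited results, and the only things to verify are the bookkeeping identifications ``$X^n$ is $F$-acyclic for all $n$'' $\Leftrightarrow$ ``$X \in \Ch^+(\mc J)$'' and the fact that boundedness below of $X$ is precisely the setting in which \cite[Proposition 1.8.3]{KS90} applies. If a more self-contained argument were desired, one would instead pick a quasi-isomorphism $X \to I$ into a bounded-below complex $I$ of injectives (possible since $\mc C$ has enough injectives and $X \in \Ch^+(\mc C)$), and then check that $F(X) \to F(I)$ is a quasi-isomorphism: the mapping cone of $X \to I$ is an exact bounded-below complex with $F$-acyclic terms, so its image under $F$ is again exact --- but this last step is once more exactly \cite[Proposition 1.8.3]{KS90}, so nothing is gained by unwinding it.
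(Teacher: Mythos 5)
Your argument matches the paper's: the lemma is presented there as a folklore consequence of \cite[Proposition 1.8.3]{KS90} applied to the $F$-injective subcategory of $F$-acyclic objects (via \cite[Exercise I.19]{KS90}), which is precisely your citation assembly. The proposal is correct and essentially identical to the paper's intended proof.
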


\subsection{Verdier duality}\label{app:duality}

Let $M$ be an $n$-dimensional real manifold. For any sheaf $\ms F$ in $\D^b(\k_M)$, we recall the notation introduced in \cite[ Definition 3.1.16]{KS90}: \[ \dual_M \ms F = \dR \HOM ( \ms F, \omega_M ), \quad \quad \dual'_M \ms F = \dR \HOM (\ms F, \k_M), \] where the dualizing complex is $\omega_M := a_M^! \k$ for $a_M: M \to *$.
In \cite[section 3.4]{KS90}, the authors introduce the notion of \emph{cohomologically constructible sheaves}. Here, only the following property will be used \cite[Proposition 3.4.3.]{KS90} : if $\ms F$ is a cohomologically constructible sheaf then $\dual_M \ms F$ is cohomologically constructible and $\ms F \to \dual_M \dual_M \ms F$ is an isomorphism.
It follows from \cite[Exercice III.4]{KS90} that, if an open subset $U\subseteq M$ is locally cohomologically trivial, then $\k_U$ and $\k_{\overline{U}}$ are cohomologically constructible. By \emph{locally cohomologically trivial (or l.c.t.)}, we mean that $ \big( \dR \Gamma_{\overline U} \hspace{4pt} \k_M \big)_x \simeq 0 \text{ and } \big( \dR \Gamma_{U} \hspace{4pt} \k_M \big)_x \simeq \k$ for every $x \in \overline U \setminus U$. It follows from \cite[Exercice III.4]{KS90} that, if $U$ is locally cohomologically trivial, then
\[\dual^\prime_M \k_U \simeq \k_{\overline{U}}, \quad \quad \dual_M' \k_{\overline{U}} \simeq \k_U.\]

An open set $V$ of $M$ is called \emph{locally topologically convex (or l.t.c.)} if every point in $V$ has an open neighborhood that is homeomorphic to an open convex subset of a real vector space. In particular, if $V$ l.t.c then it is l.c.t (see \cite[section 2.1]{PS21}).

\section{Details of the procedure of Section~\ref{sec_qsimplexwise}}\label{sec:appendix_proc}

Here are now the details of the procedure sketched in Section~\ref{sec_qsimplexwise}. Let $j_{\max}$ be the degree of the maximal non-zero term in~$C(i)$, which by assumption is a bounded cochain complex. We proceed as follows: 

\begin{itemize}
    \item At step~0, we map $C(i-1)$ to $\im(f_{i-1})$ via $f_{i-1}$. We call $f_{i-1,0}$ the corresponding morphism. Note that its cokernel is trivial, so it is not per se a simplex insertion. To get a true simplex insertion it is sufficient to merge this step with the next one in the final procedure.
\end{itemize}

\begin{itemize}
    \item At step~1, we add $N_{j_{\max}}$ in degree~$j_{\max}$ to our complex, via the following morphism called~$f_{i-1,1}$:
\[
\begin{tikzcd}
\cdots \ar[r] & \im(f_{i-1}^{j_{\max}-1}) \ar[r, "d_i^{j_{\max}-1}"] 	\ar[d, shift left=1, no head] \ar[d, shift right=1, no head] & \im(f_{i-1}^{j_{\max}}) \ar[r, "d_i^{j_{\max}}"] 	\ar[d, tail, "{\left[\begin{smallmatrix}\id\\0\end{smallmatrix}\right]}"] & 0 \ar[r] \ar[d, shift left=1, no head] \ar[d, shift right=1, no head] & \cdots\\[5pt]
\cdots \ar[r] & \im(f_{i-1}^{j_{\max}-1}) \ar[r, "d_i^{j_{\max}-1}"] & \im(f_{i-1}^{j_{\max}}) \oplus N_{j_{\max}} \ar[r, "d_i^{j_{\max}}"] &  0 \ar[r] & \cdots
\end{tikzcd} 
\]
Strictly speaking, the horizontal maps in the diagram are the restrictions of the differentials of~$C(i)$ to the appropriate spaces. Note that we have both $\im (f_{i-1}^{j_{\max}+1})=0$ and $d_i^{j_{\max}}=0$ here, because $C(i)^{j_{\max}+1}=0$. The two rows are then well-defined complexes and the vertical arrows form a well-defined monomorphism of complexes between them.
\end{itemize}

\begin{itemize}
    \item At every subsequent step s>1, we add $N_{j}$ in degree~$j=j_{\max}+1-s$ to our complex, via the following morphism called~$f_{i-1,s}$:
\[\begin{tikzcd}
\cdots \ar[r] & \im(f_{i-1}^{j-1}) \ar[r, "d_i^{j-1}"] 	\ar[d, shift left=1, no head] \ar[d, shift right=1, no head] & \im(f_{i-1}^{j}) \ar[r, "d_i^{j}"] 	\ar[d, tail, "{\left[\begin{smallmatrix}\id\\0\end{smallmatrix}\right]}"] & \im(f_{i-1}^{j+1}) \oplus N_{j+1} \ar[r] \ar[d, shift left=1, no head] \ar[d, shift right=1, no head] & \cdots\\[5pt]
\cdots \ar[r] & \im(f_{i-1}^{j-1}) \ar[r, "d_i^{j-1}"] & \im(f_{i-1}^{j}) \oplus N_{j} \ar[r, "d_i^{j}"] &  \im(f_{i-1}^{j+1}) \oplus N_{j+1} \ar[r] & \cdots
\end{tikzcd}\]    
Again, strictly speaking, the horizontal maps in the diagram are the restrictions of the differentials of~$C(i)$ to the appropriate spaces. Note in particular that $d_i^{j}$ makes sense here because $N_{j+1}$ has been inserted previously. The two rows are then well-defined complexes, and the vertical arrows form a well-defined monomorphism of complexes between them.
\end{itemize}

After finitely many steps (because $C(i)$ is bounded), our current complex becomes~$C(i)$, and the  intermediate morphisms $f_{i-1,s}$ compose to~$f_{i-1}$. Note however that they may not be simplex insertions individually, because each vector space complement~$N_{j_{\max}+1-s}$ is inserted at once via~$f_{i-1,s}$. We therefore further decompose every~$f_{i-1,s}$ into a sequence of simplex insertions, each inserting a new dimension of~$N_{j_{\max}+1-s}$ in the complex in degree~$(j_{\max}+1-s)$---this requires fixing an arbitrary ordered basis of~$N_{j_{\max}+1-s}$ in advance, the choice of ordered basis being irrelevant.

\end{document}